\newtheorem{defn}{Definition}[section]
\newtheorem{theo}[defn]{Theorem}
\newtheorem{lem}[defn]{Lemma}
\newtheorem{prop}[defn]{Proposition}
\newtheorem{rem}[defn]{Remark}
\newtheorem{exam}[defn]{Example}
\newenvironment{proof}{{\bf Proof }}{{\vskip 0.1cm \hfill$\Box$}}
\begin{document} 

\noindent
{\Large \bf Robust error estimates of PINN in one-dimensional boundary value problems for linear elliptic equations}
\\ \\
\bigskip
\noindent
{\bf Jihahm Yoo, Haesung Lee{\footnote{Corresponding author }}}  \\
\noindent
{\bf Abstract.}  In this paper, we study physics-informed neural networks (PINN) to approximate solutions to one-dimensional boundary value problems for linear elliptic equations and establish robust error estimates of PINN regardless of the quantities of the coefficients. In particular, we rigorously demonstrate the existence and uniqueness of solutions using the Sobolev space theory based on a variational approach. Deriving $L^2$-contraction estimates, we show that the error, defined as the mean square of the differences between the true solution and our trial function at the sample points, is dominated by the training loss. Furthermore, we show that as the quantities of the coefficients for the differential equation increase, the error-to-loss ratio rapidly decreases. Our theoretical and experimental results confirm the robustness of the error regardless of the quantities of the coefficients. \\ \\
\noindent
{Mathematics Subject Classification (2020): {Primary: 34B05, 35A15, Secondary: 68T07, 65L10}}\\

\noindent 
{Keywords: Sobolev spaces, boundary value problems, existence and uniqueness, Physics-Informed Neural Networks (PINN), $L^2$-contraction estimates, error estimates
}

\section{Introduction}
\subsection{Literature review}
\noindent
Neural network-based methods have recently gained significant attention for solving problems in mathematical sciences, particularly in numerical analysis. There has been active research for neural network-based methods to approximate solutions to various differential equations (see \cite{YYK15} and references therein). For instance, if a true solution to a differential equation is not explicitly known, one can train neural networks by minimizing a training loss associated with satisfying the differential equation, without relying on any actual data points that the solution fulfills. These neural networks trained to satisfy differential equations are called physics-informed neural networks (PINN). \\
The basic idea for PINN may originate from \cite{LK90, DP94}. Particularly, in \cite{DP94} one-layer neural network is used to approximate a solution to a special form of the two-dimensional Poisson equation with homogeneous boundary conditions. In \cite{LLF98}, instead of using a neural network directly, the authors use a trial function which is a slight variation in neural networks forcing to satisfy the initial or boundary conditions.
Indeed, it was discussed in \cite{MZ22, SZK23} that using a trial function forcing to satisfy the boundary condition is superior to using a standard neural network in terms of performance such as acceleration of convergence. Recently in \cite{CRBD18, RPK19}, PINN-based methods have been potent tools for various differential equations solvers with the advent of efficient utilities such as TensorFlow and the help of advanced GPU capability. In particular, \cite{RPK19} demonstrates that the solution of the two-dimensional Navier-Stokes equation and the Burgers equation can be approximated very effectively using PINN. \\
In addition to the success of empirical analysis in PINN, there has been various theoretical research to show the convergence of neural networks to a solution for each case of differential equations. One of the most important ingredients of using neural networks to approximate solutions to differential equations is the universal approximation theorem (see \cite{HSW89, C89, R23}). Although the universal approximation theorem mathematically guarantees that any continuous function on a compact interval can be uniformly approximated by neural networks, the theorem is crucially based on a functional analytic result, Stone-Weierstrass theorem (see \cite[Theorem 7.32]{R76}), making it difficult to find an approximating neural network explicitly. To overcome this limitation, one can use a posteriori analysis based on the stability estimates of PDEs. Indeed, one can show that the error related to the difference between the true solution and the neural network is controlled by the training loss which we desire to minimize. For this type of research, we refer to \cite{BTU22} that
presents theoretical aspects of error estimates for the Navier-Stokes equation through stability estimates based on a functional analytic approach.
We also refer to \cite{SJH23, SDK20}  which importantly use the Schauder estimates and $H^{2}$-estimates for their error analysis, respectively.
We mention \cite{MM22} and \cite{MM23}, which provide error estimates in PINN in the context of inverse and forward problems, respectively, within an abstract framework. Additionally, we address \cite{DM22} and \cite{DJM24}, which rigorously derive error estimates in PINNs, with explicit convergence rates for the Kolmogorov equations and the Navier-Stokes equations, respectively. More recently, \cite{YKGL24} shows error estimates in PINN for initial value problems for ordinary differential equations, and \cite{HKL24} develops a new error analysis in PINN based on classical finite element approximation. In \cite{ZMA24}, a new abstract framework unifying forward and inverse problems in PINN is developed, and $H^{1/2}$-error estimates are obtained by utilizing C\'{e}a's Lemma and the coercivity of bilinear forms. In \cite{BTU22, SJH23, SDK20, MM22, MM23, MZ22, ZMA24}, while the error estimates in PINN show that the error decreases as the loss decreases, the lack of explicit calculation of the constants in these estimates makes it challenging to determine precise upper bounds for the error-to-loss ratio.
\subsection{Idea of the error estimates in PINN}
\noindent
Before explaining the main results in this paper, we first consider the following Dirichlet problem related to a general  linear differential operator $\mathcal{L}$ on a bounded open subset $U$ of $\mathbb{R}^d$ with sufficiently regular boundary $\partial U$:
\begin{equation} \label{underlypde}
\left\{
\begin{alignedat}{3}
\mathcal{L}u  &=&& f &\quad&\;\; \text{ in $U$ }\;\;  \\
u &= && g  \;\; &\quad&\;\; \text{ on $\partial U$},
\end{alignedat} \right.
\end{equation}
where $u$, $f$ and $g$ are included in some sufficiently regular function spaces. Assume that for each $f \in L^2(U)$ and $g \in L^2(\partial U)$ there exists a unique solution $u$ to \eqref{underlypde} such that the following $L^2$-error estimate holds:
$$
\|u\|_{L^2(U)} \leq C_{\mathcal{L}} \left(\|f\|_{L^2(U)} +\|g\|_{L^2{(\partial U)}}  \right),
$$
where $C_{\mathcal{L}}>0$ is a constant independent of $u$, $f$ and $g$. 
Now let $\Phi$ be a smooth trial function (for instance, one can choose $\Phi$ as a neural network). Then, replacing the above $u$, $f$ and $g$ by $\Phi-u$, $\mathcal{L}[\Phi]-f$ and $\Phi-g$, respectively, we get
\begin{equation} \label{contityestim}
\|\Phi-u\|_{L^2(U)} \leq C_{\mathcal{L}} \left( \|\mathcal{L}[\Phi]-f\|_{L^2(U)}+ \|\Phi-g\|_{L^2(\partial U)}  \right).
\end{equation}
Then, as a consequence of Monte Carlo integration (see Proposition \ref{montecar}), we expect to obtain that for sufficiently large $n \in \mathbb{N}$
$$
\underbrace{\frac{1}{n}\sum_{i=1}^n \big |  u(X_i)-\Phi(X_i) \big |^2}_{=:\text{\sf Error}} \leq 2C^2_{\mathcal{L}} \bigg(  \underbrace{\frac{1}{n}\sum_{i=1}^n \big |  \mathcal{L}[\Phi](X_i)-f(X_i) \big |^2+\frac{1}{n}\sum_{i=1}^n \big |  \Phi(Y_i)-g(Y_i) \big |^2}_{\text{\sf =:Loss}} \bigg), \quad \text{ very likely},
$$
where $(X_i)_{i \geq 1}$ and $(Y_i)_{i \geq 1}$ are sequences of independent and identically distributed random variables on a probability space $(\Omega, \mathcal{F}, \mathbb{P})$  that have continuous uniform distributions on $U$ and $\partial U$, respectively. Ultimately, we can expect a reduction of {\sf Error} by attempting to minimize \text{\sf Loss}. \\
Here one needs to observe the behavior of constant value $C_{\mathcal{L}}$. As the quantities of the coefficients of $\mathcal{L}$ increase, one may expect that $C_{\mathcal{L}}$ or \text{\sf Loss} also increases. 
In that case, discrepancies in boundary values between the neural networks and the true solutions are expected to remain. Indeed, we observe in our numerical experiment that the {\sf Boundary Loss} is larger than {\sf Error}, regardless of the coefficient (see Example \ref{pinn12compare}). To overcome boundary mismatch, we can use a trial function $\Psi$ ($\Psi=\Phi_A+\ell$ in our case, see Section \ref{basicexperime}) that exactly satisfies the boundary value 
and can obtain improved error analysis (cf. \cite{MZ22}). 
It has been difficult to find the existing literature that explicitly calculates the constant $C_{\mathcal{L}}$. In many cases, the $L^2$-error estimates are based on functional analytic results, making it difficult to explicitly compute the constant $C_{\mathcal{L}}$ though its existence is guaranteed (cf. \cite{ZMA24}). However, our main results, which will be explained in the next section, provide a concrete value for the constant $C_{\mathcal{L}}$.

\subsection{Main results}
\noindent
Now let us deal with our main problem
\begin{equation} \label{concreprobl}
\left\{
\begin{alignedat}{2}
& -\varepsilon \widetilde{y}''+b \widetilde{y}' +c \widetilde{y}&&= f \quad \text{ on } \, I=(x_1, x_2), \\
&\widetilde{y}(x_1)=p, \quad \widetilde{y}(x_2)&&=q,
\end{alignedat} \right.
\end{equation}
where $\varepsilon>0$, $p, q \in \mathbb{R}$ and $f, b, c \in C(\overline{I})$ with $c \geq \lambda$ on $I$ for some constant $\lambda>0$. We first show rigorously the existence and uniqueness of a weak, strong, and classical solution $\widetilde{y}$ to \eqref{concreprobl} by using the Sobolev space theory with a variational approach.  Under the assumption above, we can obtain some error estimates \eqref{weighterbdd} similarly to \eqref{contityestim} (see Theorem \ref{contracproweig}), but the constant value $C_{\mathcal{L}}$ highly depends on $\varepsilon>0$ and the coefficients $b$, in fact, $C_{\mathcal{L}}= \frac{1}{\lambda} \exp\left(\int_{I} \frac{1}{2\varepsilon} |b| dx \right)$.
On the other hand, if we additionally assume that for some constant $\gamma >0$
$$
-\frac{1}{2}b' +c \geq \gamma \quad \text{ in\; $I$ }
$$
and  $\Psi$ is a smooth function with $\Psi(x_1)=p$, $\Psi(x_2)=q$, our main result is the following error estimate (see \eqref{errorestindis} in Section \ref{mainerroranaly}): for sufficiently large $n \in \mathbb{N}$
\begin{align} \label{mainresulestim}
\underbrace{\frac{1}{n} \sum_{i=1}^n |\widetilde{y}(X_i)-\Psi(X_i)|^2}_{=:\text{\sf Error}} \leq \frac{1}{\gamma^2}  \cdot  \, \underbrace{\frac{1}{n} \sum_{i=1}^n \Big(\widetilde{f}(X_i)-\mathcal{L}[\Psi](X_i) \Big)^2}_{=: \text{\sf Loss}}, \quad \text{ very likely}.
\end{align}
Indeed, \eqref{mainresulestim} is an expression in the Monte Carlo integration for the below (see Proposition \ref{montecar} and Theorem \ref{maincontradxes}):
$$
\left( \int_{I} |\widetilde{y} - \Psi|^2 dx \right)^{1/2} \leq  C_{\mathcal{L}}  \left( \int_{I} | \widetilde{f} - \mathcal{L}[\Psi] |^2 dx \right)^{1/2},
$$
where $C_{\mathcal{L}}=\frac{1}{\gamma}$.
From the above estimates, we see that the trial function $\Psi$ is not directly related to a neural network but needs to be a smooth function satisfying the boundary condition, $\Psi(x_1)=p$ and $\Psi(x_2)=q$. But, for our a posteriori analysis, to make {\sf Loss} practically small we will consider the form 
$$
\Psi(x):=-(x-x_1)(x-x_2) N(x; \theta)+\frac{q-p}{x_2-x_1}(x-x_1)+p, \quad x \in I,
$$
where $N(x; \theta)$ is a neural network. The main observation in this paper is that $\frac{\sf Error}{\sf Loss}$ dramatically decreases with the upper bound $\frac{1}{\gamma^2}$ as $\gamma$ increases (see Examples \ref{exampleweight}, \ref{examplefirst}, \ref{example2nd}). For instance, as $\gamma$ increases, {\sf Loss} becomes large, so it is natural to imagine that {\sf Error} may also increase. Actually, we can observe that as $\gamma$ increases, {\sf Loss} tends to increase, but {\sf Error} remains robust since $\frac{\sf Error}{\sf Loss}$, with the upper bound $\frac{1}{\gamma^2}$, rapidly decreases. These observations are mathematically justified by our results where the upper bound of $\frac{\text{\sf Error}}{\text{\sf Loss}}$ is shown to be $\frac{1}{\gamma^2}$.
\\
Another interesting feature in the error estimate \eqref{mainresulestim} is that $\frac{1}{\gamma^2}$ does not depend on $\varepsilon>0$. If $\varepsilon>0$ is very small, then \eqref{concreprobl} is called singularly perturbed convection-diffusion problems, which is known to be quite difficult to solve numerically (see \cite{FH04, RST96, HJ13} and references therein). However, in our estimates \eqref{mainresulestim}, $\frac{\sf Error}{\sf Loss} (\leq \frac{1}{\gamma^2})$ is robust independent of $\varepsilon$, and {\sf Loss} does not increase as $\varepsilon$ decreases to $0$. Thus, if $\gamma$ is fixed, we observe that {\sf Error} also would remain robust independent of $\varepsilon$. These expectations are specifically confirmed in our numerical experiments (see Examples \ref{examplefirst}, \ref{example2nd}).
\subsection{Structure of this paper}
\noindent
This paper is structured as follows. Section \ref{notbasres} presents notations and conventions mainly used in this paper, especially basic results related to one-dimensional Sobolev spaces and probability theory. Section \ref{energysec} derives energy estimates through a variational approach and rigorously shows the existence and uniqueness of solutions. In addition, we will conduct experiments using a standard neural network {\sf (PINN I)} and a trial function that accurately satisfies the boundary conditions {\sf (PINN II)} and will confirm that {\sf (PINN II)} demonstrates superior performance. We will also observe in Example \ref{pinn12newex} that the energy estimate is not optimal in explaining the rapid decrease of the error-to-loss ratio as the quantity of the zero-order term increases.
In Section \ref{mucontraestim}, $L^2(I, \mu)$-contraction estimates and related error estimates are derived, and in the subsequent numerical experiment (Example \ref{weightsimul}), we will analyze the error-to-loss ratio for three $\varepsilon$.
In Section \ref{l2dxcontrasec}, we present our main error estimates \eqref{errorestindis} which improve the error estimates \eqref{weighterbdd} by using $L^2(I, dx)$-contraction estimates. The two subsequent numerical experiments (Examples \ref{examplefirst}, \ref{example2nd}) will effectively support our mathematical analysis.
Section \ref{conout} briefly demonstrates the conclusions and discussion of this paper.

\section{Notations and basic results} \label{notbasres}
\noindent
Here we explain some notations and basic results in this paper. Write $a \vee b:= \max (a,b)$, $a \wedge b:=\min (a,b)$.  Let $I$ be a (possibly unbounded) open interval on $\mathbb{R}$. Let $C(I)$ and $C(\overline{I})$ denote the sets of all continuous functions on $I$ and $\overline{I}$, respectively, and we write $C^0(I):=C(I)$ and $C^0(\overline{I}):=C(\overline{I})$. For each $k \in \mathbb{N} \cup \{ \infty \}$, denote by $C^{k}(I)$ the set of all $k$-times continuously differentiable functions on $I$.
Now let $k \in \mathbb{N} \cup \{0, \infty \}$ and define $C^k_0(I) :=\{f\in C^k(I): \text{$f$ has a compact support in $I$.} \}$ and write $C_0(I):=C_0^0(I)$. By the zero-extension on $\mathbb{R}$, it directly follows that $C_0^k(I)$ is naturally extended to $C^k_0(\mathbb{R})$. Define for each $k \in \mathbb{N} \cup \{\infty\}$
$$
C^{k}(\overline{I}) := \{f \in C(\overline{I}): \text{ there exists a function $\widetilde{f} \in C^k(\mathbb{R})$ such that $\widetilde{f}|_{I}=f$ on $I$}     \}.
$$
Let $\nu$ be a locally finite measure on $\mathbb{R}$. Denote by $dx$ the Lebesgue measure on $\mathbb{R}$ and $dx(I)$ by $|I|$.
For each $r \in [1, \infty)$, denote by $L^r(I, \nu)$ the space of all measurable functions $f$ on $I$ with $\int_{I}|f|^r d\nu<\infty$ equipped with the norm $\|f\|_{L^r(I, \nu)}:= \left(\int_{I}|f|^r d\nu  \right)^{1/r}$. Denote by $L^{\infty}(I, \nu)$ the space of all $\nu$-a.e. bounded measurable functions $f$ on $I$ equipped with the norm 
$$
\|f\|_{L^{\infty}(I, \nu)}:= \inf  \{c>0 :  \nu \left(\{|f|>c \}\right) =0 \}.
$$
Write $L^p(I):= L^p(I, dx)$ for each $p \in [1, \infty]$.
 For a locally integrable function $u$ on $I$, if there exists a locally integrable function $g$ such that $\int_{I} u \varphi' dx = -\int_{I} g \varphi dx $\, for all $\varphi \in C_0^1(I)$, then we write $u'=g$ and we call $u'$ a weak derivative of $u$ on $I$. Let $p \in [1, \infty]$. The Sobolev space $H^{1,p}(I)$ is defined to be
$$
H^{1,p}(I) = \left \{ u \in L^p(I):  \text{ there exists $g \in L^p(I)$ such that $u'=g$ on $I$. } \right\},
$$
equipped with the norm $\|u\|_{H^{1,p}(I)}:=\left(\|u\|^p_{L^p(I)}+ \|u'\|^p_{L^p(I)}\right)^{1/p}$ if $p \in[1, \infty)$ and $\|u\|_{H^{1,\infty}(I)}:=\|u\|_{L^{\infty}(I)}+ \|u'\|_{L^{\infty}(I)}$. Of course, one can also use the notation $W^{1,p}(I)$ for $H^{1,p}(I)$, but here we stick to use the notation $H^{1,p}(I)$. We also define
$$
H^{2,p}(I):=\left \{ u \in H^{1,p}(I):  \text{ there exists $h \in L^p(I)$ such that $(u')'=h$ on $I$. } \right\}.
$$
equipped with the norm $\|u\|_{H^{2,p}(I)}:=\left(\|u\|^p_{L^p(I)}+ \|u'\|^p_{L^p(I)} + \|u'' \|^p_{L^p(I)}\right)^{1/p}$ if\, $p \in[1, \infty)$ and $\|u\|_{H^{2,\infty}(I)}:=\|u\|_{L^{\infty}(I)}+ \|u'\|_{L^{\infty}(I)}+\|u''\|_{L^{\infty}(I)}$. 
An important fact is that $H^{1,p}(I)$ and $H^{2,p}(I)$ are Banach spaces (see Proposition 8.1). For $r \in [1, \infty)$, denote by $H^{1,r}_0(I)$ the closure of $C_0^1(I)$ in $H^{1,r}(I)$.
Let $(\Omega, \mathcal{F}, \mathbb{P})$ be a probability space and $X: \Omega \rightarrow \mathbb{R}$ be a random variable. Define
$$
\mathbb{E}[X] := \int_{\Omega} X d\mathbb{P}, \qquad \; \text{Var}[X]:=\mathbb{E} \left[ \Big( X-\mathbb{E}[X] \Big)^2  \right].
$$
The following well-known results presented in \cite[Theorem 8.2]{Br11} are important in our analysis which describes a connection between weak and classical derivatives. Here we leave the statement for readers.
\begin{prop} \label{basicbrezis}
Let $u \in H^{1,p}(I)$, where $p \in [1, \infty]$ and $I$ is a (possibly unbounded) open interval. Then, the following hold:
\begin{itemize}
\item[(i)]
There exists $\widetilde{u} \in C(\overline{I})$ such that $\widetilde{u}=u$ a.e. on $I$ and that
 \begin{equation*} \label{ftcsobolev}
 \widetilde{u}(y)-\widetilde{u}(x) = \int_x^y u'(t) dt, \quad \text{ for all $x,y \in \overline{I}$.}
 \end{equation*}
\item[(ii)]
If there exists $v \in C(\overline{I})$ such that $u'=v$ a.e. on $I$, then $\widetilde{u}$ as in (i) satisfies $\widetilde{u} \in C^1(\overline{I})$ and that
$$
\lim_{h \rightarrow 0} \frac{\widetilde{u}(x+h)-\widetilde{u}(x)}{h} = v(x), \quad \text{ for all $x \in \overline{I}$.}
$$
(If $x \in \partial \overline{I}$, then the limit above is defined as the one-side limit).
In other words, the classical derivative of \,$\widetilde{u}$  is the same as the weak derivative of $u$ on $I$.
\end{itemize}
\end{prop}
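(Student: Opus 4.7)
The plan is to prove (i) by constructing an explicit candidate $\widetilde{u}$ via an indefinite integral of the weak derivative, show it differs from $u$ by a constant almost everywhere, and then derive (ii) from the explicit integral formula together with the Lebesgue differentiation theorem in its continuous form.

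For (i), first fix any $y_0$ in $I$ and define the primitive
$$
w(x) := \int_{y_0}^{x} u'(t)\, dt, \qquad x \in \overline{I}.
$$
Since $u' \in L^p(I) \subset L^1_{\mathrm{loc}}(I)$, the absolute continuity of the Lebesgue integral gives that $w$ is continuous on any compact subinterval of $\overline{I}$, hence $w \in C(\overline{I})$ (for unbounded endpoints the statement is only about points of $\overline{I}$, so continuity on bounded subintervals suffices for the endpoint-to-endpoint formula). Next I verify that $w' = u'$ in the weak sense: for $\varphi \in C_0^1(I)$, Fubini's theorem applied to the integrand $u'(t)\varphi'(x)\mathbbm{1}_{[y_0,x]}(t)$ yields
$$
\int_I w(x)\varphi'(x)\, dx = \int_I u'(t) \left( \int_t^{\infty}\varphi'(x)\mathbbm{1}_I(x)\, dx \right) dt \; \text{(with appropriate signs)} \; = -\int_I u'(t)\varphi(t)\, dt.
$$
Consequently $(u-w)' = 0$ weakly on $I$.

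The key lemma, and the main technical obstacle, is to deduce that a function with vanishing weak derivative is a.e.\ constant. I would prove this by showing that if $h \in L^1_{\mathrm{loc}}(I)$ and $\int_I h\varphi'\,dx = 0$ for every $\varphi \in C_0^1(I)$, then $h$ is a.e.\ constant on $I$. This is done by fixing a test function $\rho \in C_0^1(I)$ with $\int_I \rho\, dx = 1$ and, for an arbitrary $\psi \in C_0^1(I)$, defining
$$
\varphi(x) := \int_{-\infty}^{x} \left( \psi(t) - \rho(t)\int_I \psi\, ds \right) dt,
$$
which lies in $C_0^1(I)$ because its integrand has zero total integral. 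Applying the hypothesis to this $\varphi$ and unwinding gives $\int_I h\psi\,dx = C\int_I \psi\, dx$ for $C := \int_I h\rho\, dx$ and every $\psi \in C_0^1(I)$, so $h \equiv C$ a.e.\ by the fundamental lemma of the calculus of variations. Applying this to $h = u - w$ produces a constant $c$ with $u = w + c$ a.e.; then $\widetilde{u} := w + c \in C(\overline{I})$ is the desired representative and the integral identity follows immediately from $\widetilde{u}(x) - \widetilde{u}(y) = w(x) - w(y)$.

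For (ii), using the formula from (i), if $u' = v$ a.e.\ with $v \in C(\overline{I})$ then
$$
\widetilde{u}(x+h) - \widetilde{u}(x) = \int_x^{x+h} v(t)\, dt \qquad \text{for } x, x+h \in \overline{I}.
$$
Dividing by $h$ and applying the continuity of $v$ at $x$ gives $\lim_{h \to 0}\frac{\widetilde{u}(x+h)-\widetilde{u}(x)}{h} = v(x)$, with one-sided limits at boundary points. Finally, to match the excerpt's convention that $C^1(\overline{I})$ requires a $C^1$-extension to $\mathbb{R}$, I would pick $\widetilde{v} \in C(\mathbb{R})$ extending $v$ (which exists by the definition of $C(\overline{I})$) and set $U(x) := \widetilde{u}(x_0) + \int_{x_0}^{x}\widetilde{v}(t)\, dt$ for a fixed $x_0 \in \overline{I}$; then $U \in C^1(\mathbb{R})$ and $U|_{\overline{I}} = \widetilde{u}$ by the identity above, completing the proof.
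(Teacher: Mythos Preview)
The paper does not actually prove this proposition; it merely cites \cite[Theorem 8.2]{Br11} and states the result for the reader. Your argument is correct and is essentially the standard proof given in Brezis: construct a primitive $w$ of $u'$, show $(u-w)'=0$ weakly via Fubini, invoke the lemma that a locally integrable function with vanishing weak derivative is a.e.\ constant (your test-function construction is the usual one), and then read off (ii) from the integral representation and continuity of $v$. The extra care you take at the end of (ii) to produce a genuine $C^1(\mathbb{R})$ extension, matching the paper's definition of $C^1(\overline{I})$, is a nice touch that the citation alone would not supply.
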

\centerline{}
\noindent
By the Proposition above, every function $u$ in the Sobolev space on an open interval $I$ has a continuous version $\widetilde{u}$ on $\overline{I}$. Therefore, to get a meaning of $u(x)$ for every point $x \in \overline{I}$, we will consider $u$ as its (unique) continuous version $\widetilde{u}$ on $\overline{I}$. The following is also a well-known result (\cite[Theorem 8.12]{Br11}) that characterizes the space $H^{1,r}_0(I)$
\begin{prop}
Let \,$\widetilde{u}\in H^{1,r}(I) \cap C(\overline{I})$ with $r \in [1, \infty)$. Then, $\widetilde{u} \in H^{1,r}_0(I)$ if and only if \, $\widetilde{u}=0$ on $\partial \overline{I}$.
\end{prop}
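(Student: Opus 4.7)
The plan is to prove the two implications separately. The necessity direction follows from pointwise convergence of approximating test functions at boundary points via the fundamental theorem of calculus for Sobolev functions (Proposition \ref{basicbrezis}(i)). The sufficiency direction requires constructing an explicit approximating sequence in $C_0^1(I)$ via a truncation--mollification scheme that exploits the vanishing of $\widetilde{u}$ on $\partial \overline{I}$ quantitatively.

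For necessity, assuming $\widetilde{u} \in H^{1,r}_0(I)$, take $(\varphi_n) \subset C_0^1(I)$ with $\varphi_n \to \widetilde{u}$ in $H^{1,r}(I)$. Pick $y \in I$ at which $\varphi_n(y) \to \widetilde{u}(y)$ along a (relabelled) subsequence, which is possible since $L^r$-convergence yields a.e.\ pointwise convergence and both functions are continuous. Applying Proposition \ref{basicbrezis}(i) to the difference $\varphi_n - \widetilde{u}$ gives, for every $x \in \overline{I}$,
$$
\varphi_n(x) - \widetilde{u}(x) = \bigl(\varphi_n(y) - \widetilde{u}(y)\bigr) + \int_y^x \bigl(\varphi_n'(t) - \widetilde{u}'(t)\bigr)\, dt,
$$
and H\"older's inequality bounds the integral by $|x-y|^{1-1/r}\,\|\varphi_n' - \widetilde{u}'\|_{L^r(I)} \to 0$ (using $\|\cdot\|_{L^1}$ directly when $r = 1$). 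Thus $\varphi_n(x) \to \widetilde{u}(x)$ pointwise on $\overline{I}$, and since each $\varphi_n$ is compactly supported in $I$, one has $\varphi_n(x_0) = 0$ for every $x_0 \in \partial \overline{I}$; hence $\widetilde{u}(x_0) = 0$.

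For sufficiency, assume $\widetilde{u} \equiv 0$ on $\partial \overline{I}$. Choose cut-offs $\eta_n \in C^1(\mathbb{R})$ with $0 \le \eta_n \le 1$, $\eta_n \equiv 1$ on the set of points of $I$ at distance at least $2/n$ from $\partial \overline{I}$ (and inside a ball of radius $n$ if $I$ is unbounded), $\eta_n \equiv 0$ on a $(1/n)$-neighbourhood of $\partial \overline{I}$, and $|\eta_n'| \le Cn$. Set $u_n := \eta_n \widetilde{u} \in H^{1,r}(I)$; this has compact support in $I$ and $u_n' = \eta_n' \widetilde{u} + \eta_n \widetilde{u}'$. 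Dominated convergence gives $\eta_n \widetilde{u} \to \widetilde{u}$ and $\eta_n \widetilde{u}' \to \widetilde{u}'$ in $L^r$, so the crux is $\eta_n' \widetilde{u} \to 0$ in $L^r$. For $x$ in the boundary strip $S_n$ of width $2/n$ with nearest boundary point $x_0$, Proposition \ref{basicbrezis}(i) and H\"older yield
$$
|\widetilde{u}(x)|^r \le |x - x_0|^{r-1} \int_{x_0}^x |\widetilde{u}'(t)|^r\, dt,
$$
which, together with $|\eta_n'| \le Cn$ and a Fubini estimate on the $O(1/n)$-wide strip, produces $\|\eta_n' \widetilde{u}\|_{L^r(I)}^r \le C' \|\widetilde{u}'\|_{L^r(S_n)}^r \to 0$ by absolute continuity of the Lebesgue integral. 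Finally, for each $n$ choose $\varepsilon_n > 0$ smaller than the distance from $\mathrm{supp}(u_n)$ to $\partial I$ and form $\varphi_n := u_n * \rho_{\varepsilon_n} \in C_0^\infty(I) \subset C_0^1(I)$; standard mollifier theory together with a diagonal argument gives $\varphi_n \to \widetilde{u}$ in $H^{1,r}(I)$, so $\widetilde{u} \in H^{1,r}_0(I)$.

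The main obstacle is the $L^r$-control of $\eta_n' \widetilde{u}$: the factor $|\eta_n'|$ blows up like $n$ on a strip of width $\sim 1/n$, so a naive $L^\infty$ bound on $\widetilde{u}$ over that strip does not produce vanishing in $L^r$. The argument genuinely requires the quantitative $(1-1/r)$-H\"older decay of $\widetilde{u}$ at $\partial \overline{I}$, which is itself a direct consequence of the Sobolev fundamental theorem of calculus in Proposition \ref{basicbrezis}(i) combined with the assumed boundary trace $\widetilde{u}(x_0) = 0$.
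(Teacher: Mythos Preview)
The paper does not actually supply a proof of this proposition; it simply cites \cite[Theorem 8.12]{Br11} and moves on. Your argument is correct and self-contained. The necessity direction is exactly the standard one (also in Brezis): $H^{1,r}$-convergence upgrades to pointwise convergence on $\overline{I}$ via Proposition \ref{basicbrezis}(i), and compactly supported approximants vanish at $\partial\overline{I}$.

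For the sufficiency direction your spatial cutoff--mollification scheme is valid, and the Hardy-type estimate $\|\eta_n'\widetilde{u}\|_{L^r}^r \le C'\|\widetilde{u}'\|_{L^r(S_n)}^r$ is the right way to kill the dangerous term. It is worth noting that Brezis's own proof takes a different route: instead of a spatial cutoff, he truncates in the \emph{range} of $\widetilde{u}$ by setting $u_n = \frac{1}{n}G(n\widetilde{u})$ for a fixed $G \in C^1(\mathbb{R})$ with $G \equiv 0$ on $[-1,1]$ and $|G'| \le M$. Then $u_n$ vanishes on the open set $\{|\widetilde{u}| < 1/n\}$, which by continuity contains a neighbourhood of $\partial\overline{I}$, and $u_n \to \widetilde{u}$ in $H^{1,r}$ follows from $u_n' = G'(n\widetilde{u})\widetilde{u}'$ and dominated convergence. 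That approach sidesteps the Hardy estimate entirely but uses the chain rule for Sobolev functions; your approach is more hands-on and makes the role of the boundary decay rate explicit. Both are standard and either would serve.

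One minor point: your treatment of the unbounded case is a little compressed. The cutoff at infinity should carry $|\eta_n'| \le C/n$ (not $Cn$) on the far annulus, and then $\|\eta_n'\widetilde{u}\|_{L^r(\{|x|>n\})} \to 0$ follows directly from $\widetilde{u} \in L^r(I)$ without needing any boundary-trace information there. This is implicit in what you wrote but worth stating to avoid confusion with the bound near $\partial\overline{I}$.
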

\centerline{}
\noindent
Below are the basic probabilistic facts about well-known Monte Carlo integration, which will be used in our numerical analysis. We explain it by using the weak law of large numbers. For readers, we present the statement and its proofs.

\begin{prop} \label{montecar}
Let $(X_i)_{i \geq 1}$ be a sequence of independent and identically distributed random variables on a probability space $(\Omega, \mathcal{F}, \mathbb{P})$  that has a continuous uniform distribution on $I$. Let $\Psi \in C(\overline{I})$ with $\Psi \geq 0$ on $\overline{I}$. Then, for each $n, k\in \mathbb{N}$
$$
\mathbb{P} \left( \left| \frac{1}{|I|}\int_{I} \Psi dx -\frac{1}{n}\sum_{i=1}^n \Psi(X_i)   \right| \leq \frac{k \beta}{\sqrt{n}} \right) \geq 1-\frac{1}{k^2},
$$
where $\beta:=\left(\frac{1}{|I|}\int_{I} \Psi^2 dx- \left(\frac{1}{|I|} \int_{I} \Psi dx \right)^2 \right)^{1/2}$.
\end{prop}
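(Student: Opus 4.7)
The plan is to recognize this as a textbook application of Chebyshev's inequality to the sample mean of the i.i.d.\ sequence $Y_i := \Psi(X_i)$, which is permissible because $\Psi$ is continuous on the compact set $\overline{I}$ and hence bounded, so $Y_i \in L^2(\Omega,\mathbb{P})$.

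First I would compute the mean and variance of $Y_1$. Since $X_1$ has continuous uniform distribution on $I$, the law of the unconscious statistician yields
\[
\mathbb{E}[Y_1] = \int_\Omega \Psi(X_1)\,d\mathbb{P} = \frac{1}{|I|}\int_I \Psi\, dx, \qquad \mathbb{E}[Y_1^2] = \frac{1}{|I|}\int_I \Psi^2\, dx,
\]
so that $\mathrm{Var}[Y_1] = \mathbb{E}[Y_1^2] - (\mathbb{E}[Y_1])^2 = \beta^2$. Next I set $S_n := \frac{1}{n}\sum_{i=1}^n Y_i = \frac{1}{n}\sum_{i=1}^n \Psi(X_i)$. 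By linearity of expectation, $\mathbb{E}[S_n] = \mathbb{E}[Y_1] = \frac{1}{|I|}\int_I \Psi\, dx$, and by the independence of the $Y_i$'s (inherited from the $X_i$'s), the variance is additive, giving $\mathrm{Var}[S_n] = \frac{1}{n^2}\sum_{i=1}^n \mathrm{Var}[Y_i] = \frac{\beta^2}{n}$.

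The final step is Chebyshev's inequality applied to $S_n$ at threshold $\epsilon = k\beta/\sqrt{n}$: when $\beta > 0$,
\[
\mathbb{P}\left(\left|S_n - \mathbb{E}[S_n]\right| > \frac{k\beta}{\sqrt{n}}\right) \leq \frac{\mathrm{Var}[S_n]}{(k\beta/\sqrt{n})^2} = \frac{\beta^2/n}{k^2\beta^2/n} = \frac{1}{k^2},
\]
and taking complements delivers exactly the stated bound. The only subtle case is $\beta = 0$, where the Chebyshev step is vacuous; however $\beta = 0$ forces $\Psi$ to be a.e.\ constant on $I$, hence constant by continuity, in which case $S_n \equiv \frac{1}{|I|}\int_I \Psi\, dx$ almost surely and the event inside the probability has full measure trivially.

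I do not foresee a genuine obstacle here; the only items requiring minor care are verifying $\mathbb{E}[Y_1^2] < \infty$ (immediate from $\Psi \in C(\overline{I})$ when $I$ is bounded, which is the setting of the paper) and handling the degenerate $\beta = 0$ case separately as indicated. Note also that the hypothesis $\Psi \geq 0$ is not actually used in the argument above — the estimate holds for any $\Psi \in C(\overline{I})$ — but assuming it does no harm.
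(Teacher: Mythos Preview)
Your proposal is correct and follows essentially the same route as the paper: compute the mean and variance of $\Psi(X_i)$, pass to the sample mean, and apply Chebyshev's inequality with threshold $k\beta/\sqrt{n}$ before taking complements. Your write-up is in fact slightly more careful than the paper's, since you explicitly dispose of the degenerate case $\beta=0$ and note that the hypothesis $\Psi\geq 0$ plays no role.
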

\noindent
\begin{proof}
For each $ i \geq 1$, we get $\mathbb{E} \left[ \Psi (X_i)  \right] = \frac{1}{|I|}\int_{I} \Psi dx =: \alpha$ and 
$$
\text{Var}[\Psi(X_i)] =\mathbb{E}\Big[ \Psi(X_i)^2 \Big]  - \mathbb{E}\Big[ \Psi(X_i) \Big]^2  = \frac{1}{|I|}\int_{I} \Psi^2 dx- \left(\frac{1}{|I|} \int_{I} \Psi dx \right)^2 = \beta^2.
$$
Define
$$
\overline{Y}_n:= \frac{1}{n} \sum_{i=1}^n \Psi(X_i).
$$
Note that $\mathbb{E}[\overline{Y}_n]=\frac{1}{n}\sum_{i=1}^n \mathbb{E}[\Psi(X_i)]=\alpha$. Since $\left( \Psi(X_i)  \right)_{i \ge 1}$ is also a sequence of independent and identically distributed random variables on a probability space $(\Omega, \mathcal{F}, \mathbb{P})$, we get 
$$
\text{Var}\big[ \overline{Y}_n \big] = \frac{1}{n^2} \sum_{i=1}^n \text{Var}[\Psi(X_i) ] = \frac{\beta^2}{n}.
$$ 
Using Chebyshev's inequality, for each $c>0$ we get
$$
\mathbb{P} \left(  |\overline{Y}_n-\alpha| \geq c \right) \leq \frac{\beta^2}{c^2n}
$$
By choosing $c:=\frac{k \beta}{\sqrt{n}}$ and considering the complement event, the assertion follows.
\end{proof}
\centerline{}
\noindent
Based on Proposition \ref{montecar}, for sufficiently large but fixed $k \in \mathbb{N}$ if we choose $n$ much larger than $k$, then we can write that
\begin{equation} \label{monteintsim}
\mathbb{P} \left(  \frac{1}{|I|}\int_{I} \Psi dx   \approx  \frac{1}{n}\sum_{i=1}^n \Psi(X_i)     \right)  \approx 1.
\end{equation}
If \eqref{monteintsim} holds, then we will use the following notation that for sufficiently large $n \in \mathbb{N}$
\begin{equation} \label{montecarint}
\frac{1}{|I|}\int_{I} \Psi dx   =  \frac{1}{n}\sum_{i=1}^n \Psi(X_i), \quad \text{very likely},
\end{equation}

\section{Energy estimates} \label{energysec}
\subsection{Existence and uniqueness with energy estimates} 
\begin{lem} \label{embedlem}
Let $u \in H^{1,1}_0(I)$ where $I=(x_1, x_2)$ is a bounded open interval. 
Then, there exists $\widetilde{u} \in C_0(\mathbb{R}) \cap H_0^{1,1}(\mathbb{R})$ with $\widetilde{u}=0$ on $\mathbb{R} \setminus I$ such that $\widetilde{u}=u$ a.e. on $I$ and that
\begin{equation*}
\| u\|_{L^{\infty}(I)}=\|\widetilde{u}\|_{L^{\infty}(\mathbb{R})} \leq \|u'\|_{L^{1}(I)}. 
\end{equation*}
In particular, if $u \in H^{1,2}(I)$, then
$$
|I|^{-1/2}\|u\|_{L^2(I)} \leq \|u\|_{L^{\infty}(I)} \leq |I|^{1/2} \|u'\|_{L^2(I)}.
$$
\end{lem}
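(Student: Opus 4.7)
The plan is to split the proof into the two assertions and use the Brezis-type representation results (Proposition \ref{basicbrezis} together with the characterization of $H^{1,r}_0(I)$) as the main tools.

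For the first assertion, I would begin by applying Proposition \ref{basicbrezis}(i) to produce a continuous representative $\widetilde{u} \in C(\overline{I})$ of $u$. Since $u \in H^{1,1}_0(I)$, the trace characterization cited just after Proposition \ref{basicbrezis} gives $\widetilde{u}(x_1)=\widetilde{u}(x_2)=0$, so extending by zero on $\mathbb{R}\setminus I$ yields a continuous, compactly supported function on $\mathbb{R}$, i.e.\ an element of $C_0(\mathbb{R})$. To upgrade this to membership in $H^{1,1}_0(\mathbb{R})$, I would pick an approximating sequence $(u_n)\subset C_0^1(I)$ with $u_n\to u$ in $H^{1,1}(I)$; their zero-extensions $\widetilde{u}_n\in C_0^1(\mathbb{R})$ inherit the Cauchy property in $H^{1,1}(\mathbb{R})$ because extending by zero preserves both the $L^1$ norm and the weak derivative, and they converge in $L^1(\mathbb{R})$ to the zero-extension $\widetilde{u}$. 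By completeness of $H^{1,1}(\mathbb{R})$, the limit lies in the $H^{1,1}$-closure of $C_0^1(\mathbb{R})$, which is $H^{1,1}_0(\mathbb{R})$, and its weak derivative is the zero-extension of $u'$.

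For the pointwise bound I would use Proposition \ref{basicbrezis}(i) once more: since $\widetilde{u}(x_1)=0$, for any $x\in\overline{I}$,
\[
\widetilde{u}(x)=\widetilde{u}(x)-\widetilde{u}(x_1)=\int_{x_1}^{x}u'(t)\,dt,
\]
so $|\widetilde{u}(x)|\le \|u'\|_{L^1(I)}$. Since $\widetilde{u}$ vanishes outside $I$ this gives $\|\widetilde{u}\|_{L^\infty(\mathbb{R})}=\|u\|_{L^\infty(I)}\le\|u'\|_{L^1(I)}$ at once.

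For the second assertion, with $u\in H^{1,2}(I)$, the upper bound follows from the one just obtained combined with Cauchy--Schwarz: $\|u'\|_{L^1(I)}\le |I|^{1/2}\|u'\|_{L^2(I)}$. The lower bound is just the trivial estimate $\|u\|_{L^2(I)}^2\le|I|\,\|u\|_{L^\infty(I)}^2$. I do not anticipate a serious obstacle beyond the first assertion: the only subtle point is verifying that the zero-extension inherits both $H^{1,1}$-regularity and membership in $H^{1,1}_0(\mathbb{R})$, which is why I would go through the density argument with $C_0^1(I)$ rather than trying to directly check the distributional derivative via integration by parts against $C_0^1(\mathbb{R})$ test functions whose restrictions need not be compactly supported in $I$.
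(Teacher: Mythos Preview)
Your proposal is correct and follows essentially the same approach as the paper: a density argument with $C_0^1(I)$ (the paper uses $C_0^\infty(I)$) to obtain the zero-extension in $H_0^{1,1}(\mathbb{R})$, then the fundamental theorem of calculus from Proposition~\ref{basicbrezis}(i) for the $L^\infty$ bound, and finally H\"older for the $H^{1,2}$ inequalities. The only cosmetic difference is the order of operations---you first take the continuous version on $\overline{I}$ and then extend by zero, while the paper first passes to the limit on $\mathbb{R}$ and then invokes Proposition~\ref{basicbrezis}(i) there---but the content is the same.
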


\noindent
\begin{proof}
Since $u\in H^{1,1}_0(I)$, there exists a sequence of functions $(u_n)_{n \geq 1} \subset C_0^{\infty}(I)$ such that
$$
\lim_{n \rightarrow \infty} u_n = u, \quad \text{ in \; $H^{1,1}(I)$.}
$$
For each $n \geq 1$, let $\bar{u}_n \in C_0^{\infty}(\mathbb{R})$ be the standard zero-extension of $u_n$ on $\mathbb{R}$. Thus, there exists $\bar{u} \in H_0^{1,1}(\mathbb{R})$ such that $\bar{u}(x)=0$ for a.e. $x \in \mathbb{R} \setminus I$ and $\lim_{n \rightarrow \infty} \bar{u}_n =\bar{u}$ in $H^{1,1}(\mathbb{R})$, and hence $\bar{u}=u$ a.e. on $I$. Meanwhile, by Proposition \ref{basicbrezis}(i) there exists $\widetilde{u} \in C(\mathbb{R})$ such that $\widetilde{u}=\bar{u}$ a.e. on $\mathbb{R}$. Thus, $\widetilde{u} \in C(\mathbb{R}) \cap H^{1,1}(\mathbb{R})$ satisfying that $\widetilde{u}(x)=0$ for all $ x \in \mathbb{R} \setminus I$ and $\widetilde{u}=u$ a.e. on $I$. By the fundamental theorem of calculus (Proposition \ref{basicbrezis}(i)), for each $x \in I$ 
$$ 
\left| \widetilde{u}(x) \right |= \left| \, \widetilde{u}(x_1)+ \int_{x_1}^x \widetilde{u}'(t) dt \right|  = \left| \int_{x_1}^x \widetilde{u}'(t) dt \right| \leq \int_{I} |\widetilde{u}'(t)| dt = \|u'\|_{L^1(I)},
$$
as desired. The rest follows from the H\"{o}lder inequality.
\end{proof}
\centerline{}
\noindent
We are now interested in solving the following Dirichlet boundary value problem for a one-dimensional second-order linear elliptic operator on $I = (x_1, x_2)$:
\begin{equation} \label{undeq1}
\left\{
\begin{alignedat}{2}
& -\varepsilon \widetilde{y}\,''+b \widetilde{y}\,' +c \widetilde{y}&&= \widetilde{f} \quad \text{ on } \, I, \\
&\widetilde{y}(x_1)=p, \quad \widetilde{y}(x_2)&&=q,
\end{alignedat} \right.
\end{equation}
where $p,q \in \mathbb{R}$ and $\varepsilon>0$ are constants, $b \in L^1(I)$ and $c \in L^1(I)$ with $c \geq 0$.
By using the linearity and considering an affine function defined by $\ell(x) = \frac{p-q}{x_2-x_1}(x-x_1)+p$ and replacing $\widetilde{y}$ \,by $y=\widetilde{y}-\ell$ in \eqref{undeq1}, it is enough to study the existence  and uniqueness of solutions to the following homogeneous boundary value problem:
\begin{equation} \label{undeq2}
\left\{
\begin{alignedat}{2}
& -\varepsilon y''+b y' +c y= \widetilde{f}- b \ell' -c\ell \quad \text{ on } \,I, \\
&y(x_1)=0, \quad y(x_2)=0.
\end{alignedat} \right.
\end{equation}
The main idea for solving \eqref{undeq2} is to convert the non-divergence form of \eqref{undeq2} to symmetric divergence form and to apply the Lax-Milgram theorem on the Sobolev space which is $H^{1,2}_0(I)$.

\begin{theo}\rm \label{basictheo}
Let $I=(x_1, x_2)$ be a bounded open interval in $\mathbb{R}$ and let $\varepsilon>0$ be a constant, $b \in L^1(I)$, $c \in L^1(I)$ with $c \geq 0$ and $f \in L^1(I)$. Then, the following hold:

\begin{itemize}
\item[(i)]
Let 
\begin{equation} \label{defnrhofun}
\rho(x):=\exp\left(\int_{x_1}^{x} \frac{-b(s)}{\varepsilon} ds \right), \quad \; x \in \mathbb{R}, 
\end{equation}
where $b$ is considered as the zero-extension of $b \in L^1(I)$ on $\mathbb{R}$. Then, there exists a unique weak solution $y \in H^{1,2}_0(I)\cap C(\overline{I})$ to 
\begin{equation} \label{weaksolu}
-(\varepsilon \rho y')'+\rho c y'= \rho f  \quad \text{ on }\;\; I,
\end{equation}
i.e.
\begin{equation} \label{variequim}
\int_{I} \varepsilon y' \psi' \rho dx + \int_I c y \psi \rho dx = \int_{I} f \psi \rho dx, \quad \text{ for all $\psi \in H^{1,2}_0(I)$}.
\end{equation}
\item[(ii)]
Indeed, $y \in H^{1,2}_0(I)\cap C(\overline{I})$ in (i) satisfies $y \in H^{2,1}(I) \cap C^1(\overline{I})$ and $y$ is a unique strong solution to 
\begin{equation} \label{strongequ}
\left\{
\begin{alignedat}{2}
& -\varepsilon y''+b y' +c y= f \quad \text{ on }  I, \\
&y(x_1)=0, \quad y(x_2)=0.
\end{alignedat} \right.
\end{equation}
i.e.
\begin{equation} \label{strongequdef}
(-\varepsilon y''+ by'+cy)(x) = f(x), \quad \text{for a.e. $x \in I$} \quad \text{ and } \;\; y(x_1)=y(x_2)=0.
\end{equation}
\item[(iii)]
If $b,c,f \in C(\overline{I})$, then $y$ in (ii) satisfies $y \in C^2(\overline{I})$ and $y$ is a unique classical solution to \eqref{strongequ}, i.e.
\begin{equation*} 
(-\varepsilon y''+ by'+cy)(x) = f(x), \quad \text{ for all $x \in I$}  \quad \text{ and } \;\; y(x_1)=y(x_2)=0.
\end{equation*}
\end{itemize}
\end{theo}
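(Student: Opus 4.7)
The plan is to handle the three parts in sequence: apply the Lax-Milgram theorem on the symmetrized divergence-form equation for part (i), and then bootstrap regularity through the strong formulation for parts (ii) and (iii).

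For part (i), I first observe that $\rho \in C(\overline{I})$ is bounded between two positive constants $\rho_{\min}, \rho_{\max}$, since $b \in L^1(I)$ makes $x \mapsto \int_{x_1}^{x} b(s)/\varepsilon\, ds$ continuous on the compact set $\overline{I}$. Define the bilinear form $a(y,\psi) := \int_I \varepsilon \rho y' \psi'\, dx + \int_I \rho c y \psi\, dx$ and the linear functional $F(\psi) := \int_I \rho f \psi\, dx$ on $H^{1,2}_0(I)$. Boundedness of $a$ and $F$ follows from $\rho \in L^\infty(I)$ together with the embedding $H^{1,2}_0(I) \hookrightarrow L^\infty(I)$ from Lemma \ref{embedlem}; coercivity uses $c \geq 0$, $\rho \geq \rho_{\min}$, and the Poincaré-type estimate $\|y\|_{L^2(I)} \leq |I|\,\|y'\|_{L^2(I)}$ for $y \in H^{1,2}_0(I)$ that also follows from Lemma \ref{embedlem}. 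Lax-Milgram then yields a unique $y \in H^{1,2}_0(I)$ satisfying \eqref{variequim}, and Proposition \ref{basicbrezis}(i) supplies the continuous version on $\overline{I}$; the boundary values $y(x_1)=y(x_2)=0$ come from the characterization of $H^{1,2}_0(I)$ stated right after Proposition \ref{basicbrezis}.

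For part (ii), restricting the test functions in \eqref{variequim} to $C_0^1(I)$ shows that $v := \varepsilon \rho y'$ has $\rho(cy - f) \in L^1(I)$ as its weak derivative (using $y \in L^\infty(I)$ and $c, f \in L^1(I)$). Hence $v \in H^{1,1}(I)$ and Proposition \ref{basicbrezis}(i) gives $v \in C(\overline{I})$; dividing by the positive continuous function $\varepsilon \rho$ yields $y' \in C(\overline{I})$, so $y \in C^1(\overline{I})$. To produce $y''$, I write $y' = \rho^{-1}\cdot(\rho y')$: both factors lie in $H^{1,1}(I)\cap L^\infty(I)$, with weak derivatives $b/(\varepsilon\rho)$ and $\rho(cy-f)/\varepsilon$ respectively, so a product rule for bounded absolutely continuous functions on $\overline{I}$ gives $y' \in H^{1,1}(I)$ with
$$y'' = \frac{b}{\varepsilon\rho}(\rho y') + \rho^{-1}\cdot\frac{\rho(cy-f)}{\varepsilon} = \frac{by' + cy - f}{\varepsilon} \in L^1(I).$$
This is exactly $-\varepsilon y'' + b y' + c y = f$ a.e., and the boundary values are inherited from (i), so $y \in H^{2,1}(I) \cap C^1(\overline{I})$ is a strong solution. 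Uniqueness reduces to uniqueness in (i): the difference of two strong solutions is an $H^{1,2}_0$ weak solution of the homogeneous problem, hence zero.

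For part (iii), once $b, c, f \in C(\overline{I})$ and $y, y' \in C(\overline{I})$ are in hand, the identity $y'' = (by' + cy - f)/\varepsilon$ forces $y'' \in C(\overline{I})$. Proposition \ref{basicbrezis}(ii) then upgrades $y$ to $C^2(\overline{I})$ with the equation holding pointwise on $I$, and uniqueness is inherited from (ii). The main obstacle I anticipate is the product-rule step in part (ii): namely, justifying $y' \in H^{1,1}(I)$ (and not only $\rho y' \in H^{1,1}(I)$) under the mere assumption $b \in L^1(I)$. This needs a product rule valid for two bounded $H^{1,1}$ factors, which holds because such functions are absolutely continuous and bounded on $\overline{I}$, but it merits a brief verification via mollification and dominated convergence using the uniform $L^\infty$ bounds. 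After that, everything else is algebraic rearrangement and standard bootstrap.
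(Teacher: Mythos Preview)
Your proposal is correct and follows essentially the same route as the paper: Lax--Milgram on the symmetrized form $\mathcal{E}(u,v)=\int_I \varepsilon u'v'\rho\,dx+\int_I cuv\,\rho\,dx$ for (i), then in (ii) the observation that $\rho y'\in H^{1,1}(I)\cap C(\overline{I})$ followed by the product rule with $1/\rho\in H^{1,1}(I)\cap C(\overline{I})$ to obtain $y'\in H^{1,1}(I)$ and hence the strong equation, and in (iii) the bootstrap $y''=(by'+cy-f)/\varepsilon\in C(\overline{I})$. Your explicit computation of $y''$ and your flagging of the product-rule step (which the paper handles by citing \cite[Corollary 8.11]{Br11}) are exactly the right focal points; nothing further is needed.
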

\noindent 
\begin{proof}
(i)
Observe that by \cite[Corollary 8.11]{Br11}, $\rho \in H^{1,1}(I) \cap C(\overline{I})$ and that $\rho(x)>0$ for all $x \in \overline{I}$.
Define a bilinear form $\mathcal{E}$ on $H^{1,2}_0(I) \times H^{1,2}_0(I)$ by 
$$
\mathcal{E}(u,v) := \int_{I} \varepsilon u' v' d\mu  + \int_{I} c uv \,d\mu, \;\;\quad u, v \in H^{1,2}_0(I),
$$
where $\mu = \rho dx$. Then, we obtain from Lemma \ref{embedlem} that for each $u,v \in H^{1,2}_0(I)$
\begin{align*}
|\mathcal{E}(u,v)| &\leq \varepsilon  \left(\int_{I} |u'|^2 \,d\mu \right)^{1/2} \left(\int_{I} |v'|^2\, d\mu \right)^{1/2} + \|\rho c\|_{L^1(I)} \|u\|_{L^{\infty}(I)} \|v\|_{L^{\infty}(I)} \\
&\leq \varepsilon \left(\int_{I} |u'|^2 \,d\mu \right)^{1/2} \left(\int_{I} |v'|^2\, d\mu \right)^{1/2} +|I| \cdot  \|\rho c\|_{L^1(I)} \left( \int_{I} |u'|^2 dx \right)^{1/2}  \left( \int_{I} |v'|^2 dx \right)^{1/2}  \\
& \leq  \underbrace{ (\varepsilon+|I|   \cdot \|c\|_{L^1(I)}) \left(\max_{\overline{I}} \rho \right) }_{=:K}\cdot \left( \int_{I} |u'|^2 dx \right)^{1/2}  \left( \int_{I} |v'|^2 dx \right)^{1/2}  \\
& \leq K \, \|u\|_{H^{1,2}(I)} \|v\|_{H^{1,2}(I)}.
\end{align*}
Moreover,
\begin{align*}
\mathcal{E}(u,u) &\geq \varepsilon \int_{I} |u'|^2 d\mu \geq \varepsilon \left( \min_{\overline{I}} \rho \right) \frac12 \int_{I}|u'|^2 dx+  \varepsilon \left( \min_{\overline{I}} \rho \right) \frac12 |I|^{-1} \|u\|^2_{L^{\infty}(I)}  \\\\
&\geq  \varepsilon \left( \min_{\overline{I}} \rho \right) \frac12 \int_{I}|u'|^2 dx+ \varepsilon \left( \min_{\overline{I}} \rho \right) \frac12 |I|^{-2} \int_{I} |u|^2 dx \\
& \geq \delta \|u\|^2_{H^{1,2}_0(I)},
\end{align*}
where $\displaystyle \delta =\frac{\varepsilon}2 \min_{\overline{I}} \rho \cdot (1 \wedge |I|^{-2})>0$. By the Lax-Milgram theorem (see \cite[Corollary 5.8]{Br11}), there exists a unique $y \in H^{1,2}_0(I)$ such that
$$
\mathcal{E}(y, \psi) = \int_{I} f \psi d\mu, \quad \text{ for all $\psi \in H^{1,2}_0(I)$},
$$
and hence \eqref{variequim} follows. \\
(ii)
Observe that $y' \in H^{1,1}(I) \cap C(\overline{I})$. Indeed, \eqref{variequim} yields that $\rho y' \in H^{1,1}(I) \cap C(\overline{I})$. Since $\frac{1}{\rho} \in H^{1,1}(I) \cap C(\overline{I})$, we have $y' \in H^{1,1}(I) \cap C(\overline{I})$ by the product rule, so that $y \in C^1(\overline{I})$ by Proposition \ref{basicbrezis}(ii).
Moreover, \eqref{variequim} is equivalent to
$$
\int_{I} \left(-\varepsilon y''+ by'+cy\right) \psi \rho dx= \int_{I} \left(-\varepsilon y'' - \frac{\varepsilon \rho'}{\rho}y' + cy\right) \psi \rho dx= \int_{I} f \psi \rho dx, \quad \text{ for all $\psi \in H^{1,2}_0(I)$},
$$
and hence \eqref{strongequdef} holds. Conversely, as the above equivalence if $\overline{y}$ is a strong solution to \eqref{strongequ}, then $\overline{y}$ is a weak solution to \eqref{weaksolu}. Thus, the uniqueness of weak solutions as in (i) implies that 
$\overline{y}=y$ on $I$, and hence the uniqueness of strong solutions to \eqref{strongequ} is shown. \\
(iii) Assume that $b, c, f \in C(\overline{I})$. Then, $\varepsilon y'' = by'+cy-f$ on $I$. Since $by'+cy-f \in C(\overline{I})$, we have $y'' \in C(\overline{I})$ by Proposition \ref{basicbrezis}(ii) . Therefore, $y \in C^2(\overline{I})$ is a unique classical solution to \eqref{strongequ}.
\end{proof}

\begin{theo}[Energy estimates] \label{energyestim} 
Let $\varepsilon>0$ be a constant, $b \in L^1(I)$, $c \in L^1(I)$ with $c \geq 0$ on $I$ and $f \in L^1(I)$, where $I$ is a bounded open interval.
Let $y \in H^{1,2}_0(I) \cap H^{2,1}(I) \cap C^1(\overline{I})$ be a unique strong solution to \eqref{strongequdef} as in Theorem \ref{basictheo}(ii). Then, it holds that
\begin{equation} \label{firstestim}
\|y'\|_{L^2(I)} \leq \left( \frac{\max_{\overline{I}} \rho}{ \varepsilon \min_{\overline{I}} \rho}  \right) |I|^{1/2} \|f\|_{L^1(I)},
\end{equation}
where $\rho$ is the function defined in \eqref{defnrhofun}. In particular, if $f \in L^2(I)$, then
\begin{align}
\|y\|_{L^2(I)}  &\leq  |I|^{1/2}\|y\|_{L^{\infty}(I)} \leq |I| \|y'\|_{L^{2}(I)} \leq
\left( \frac{\max_{\overline{I}} \rho}{\varepsilon \min_{\overline{I}} \rho}  \right)  |I|^{3/2} \|f\|_{L^1(I)} \label{linfestim} 	\\
&\leq \left( \frac{\max_{\overline{I}} \rho}{\varepsilon \min_{\overline{I}} \rho}  \right)  |I|^{2} \|f\|_{L^2(I)}.  \nonumber 
\end{align}
\end{theo}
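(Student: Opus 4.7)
The plan is to prove the first estimate by a standard energy argument: testing the variational formulation \eqref{variequim} against the solution itself and exploiting the sign of $c$, then closing via the Sobolev embedding in Lemma \ref{embedlem}. The remaining chain of inequalities for $f \in L^2(I)$ will follow from Lemma \ref{embedlem} and Hölder.

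First I would plug $\psi = y$ into \eqref{variequim}, which is legitimate since $y \in H^{1,2}_0(I)$, to obtain
\begin{equation*}
\int_I \varepsilon |y'|^2 \rho\, dx + \int_I c\, y^2 \rho\, dx = \int_I f\, y\, \rho\, dx.
\end{equation*}
Because $c \geq 0$ and $\rho > 0$, the middle term is nonnegative and can be dropped. Bounding $\rho$ on the left by $\min_{\overline{I}} \rho$ and on the right by $\max_{\overline{I}} \rho$ gives
\begin{equation*}
\varepsilon\bigl(\min_{\overline{I}} \rho\bigr)\|y'\|_{L^2(I)}^2 \leq \bigl(\max_{\overline{I}} \rho\bigr) \int_I |f|\,|y|\, dx \leq \bigl(\max_{\overline{I}} \rho\bigr)\,\|y\|_{L^\infty(I)}\,\|f\|_{L^1(I)}.
\end{equation*}
Now I invoke Lemma \ref{embedlem}, which applies since $y \in H^{1,2}_0(I) \subset H^{1,1}_0(I)$, to get $\|y\|_{L^\infty(I)} \leq |I|^{1/2}\|y'\|_{L^2(I)}$. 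Substituting this in and cancelling one factor of $\|y'\|_{L^2(I)}$ (trivial if it vanishes) yields \eqref{firstestim}.

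For the chain \eqref{linfestim}, the first inequality $\|y\|_{L^2(I)} \leq |I|^{1/2}\|y\|_{L^\infty(I)}$ is immediate from integrating the pointwise bound; the second $\|y\|_{L^\infty(I)} \leq |I|^{1/2}\|y'\|_{L^2(I)}$ is again Lemma \ref{embedlem}; the third is just \eqref{firstestim} multiplied by $|I|$; and the fourth uses Hölder's inequality $\|f\|_{L^1(I)} \leq |I|^{1/2}\|f\|_{L^2(I)}$.

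There is no genuine obstacle: the whole argument is a textbook coercivity-plus-embedding estimate, and the only point that deserves care is ensuring that the $\rho$-weights can be pulled out cleanly, which works because $\rho$ is continuous and strictly positive on $\overline{I}$ (established in the proof of Theorem \ref{basictheo}(i)), so both $\min_{\overline{I}}\rho$ and $\max_{\overline{I}}\rho$ are finite and positive.
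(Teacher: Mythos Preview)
Your proposal is correct and follows essentially the same argument as the paper: test \eqref{variequim} with $\psi=y$, drop the nonnegative $c$-term, bound $\rho$ above and below, apply the embedding $\|y\|_{L^\infty(I)} \leq |I|^{1/2}\|y'\|_{L^2(I)}$ from Lemma \ref{embedlem}, and cancel. The chain \eqref{linfestim} is likewise handled identically via Lemma \ref{embedlem} and H\"older.
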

\noindent
\begin{proof}
Substituting $y$ for $\psi$ in \eqref{variequim} and using Lemma \ref{embedlem} and the H\"{o}lder inequality, we get
\begin{align*} \label{variequim2}
\varepsilon \left(\min_{\overline{I}} \rho\right) \int_{I} |y'|^2 dx &\leq \varepsilon \int_{I} |y'|^2 \rho dx \leq \varepsilon \int_{I} |y'|^2 \rho dx + \int_I c y^2 \rho dx = \int_{I} f y \rho dx \\
&\leq \left(\max_{\overline{I}} \rho \right) \|y\|_{L^{\infty}(I)} \|f \|_{L^1(I)} \leq \left( \max_{\overline{I}} \rho \right)|I|^{1/2} \left( \int_{I} |y'|^2 dx \right)^{1/2} \|f\|_{L^{1}(I)},
\end{align*}
and hence \eqref{firstestim} follows. The rest follows by Lemma \ref{embedlem} and the H\"{o}lder inequality.
\end{proof}

\begin{theo}  \label{exunisol}
Let $\varepsilon>0$ be a constant, $b \in L^1(I)$, $c \in L^1(I)$ with $c \geq 0$ on $I$ and $\widetilde{f} \in L^1(I)$, where $I=(x_1, x_2)$ is a bounded open interval. Let $p,q \in \mathbb{R}$ be given.
Then, there exists a unique strong solution $\widetilde{y} \in H^{2,1}(I) \cap C^1(\overline{I})$ to the following boundary value problem
\begin{equation} \label{bvpstrong}
\left\{
\begin{alignedat}{2}
& -\varepsilon\widetilde{y}''+b \widetilde{y}' +c \widetilde{y}=  \widetilde{f} \quad \text{ on }  I, \\
&\widetilde{y}(x_1)=p, \quad \widetilde{y}(x_2)=q,
\end{alignedat} \right.
\end{equation}
i.e.
\begin{equation*} \label{strongequdef2}
(-\varepsilon \widetilde{y}''+ b\widetilde{y}'+c\widetilde{y})(x) = \widetilde{f}(x), \quad \text{for a.e. $x \in I$} \quad \text{ and } \;\; \widetilde{y}(x_1)=p,\;\; \widetilde{y}(x_2)=q.
\end{equation*}
In particular, if $\widetilde{f} \in L^2(I)$, then $\widetilde{y}$ fulfills the following estimate:
\begin{align*}
 \|\widetilde{y}\|_{L^{2}(I)} \leq  K_1 \|\widetilde{f}\|_{L^2(I)} +K_2 (|p| \vee |q|),
\end{align*}
where $K_1=\left( \frac{\max_{\overline{I}} \rho}{\varepsilon\min_{\overline{I}} \rho}  \right) |I|^2$ and $K_2=\left( \frac{\max_{\overline{I}} \rho}{ \varepsilon \min_{\overline{I}} \rho}  \right) \left( |I|^{1/2} \|b\|_{L^1(I)}  +  |I|^{3/2} \|c\|_{L^1(I)}\right) + |I|^{1/2}$ and $\rho$ is the function defined in \eqref{defnrhofun}.
Moreover, if $b, c \in C(\overline{I})$, then $\widetilde{y} \in C^2(\overline{I})$ and it is a unique classical solution to \eqref{bvpstrong}.
\end{theo}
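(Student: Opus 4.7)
The proof plan is to reduce the inhomogeneous boundary value problem to the homogeneous case already solved in Theorem \ref{basictheo}, and then translate the energy estimates of Theorem \ref{energyestim} across this reduction. Define the affine function $\ell(x) := \frac{q-p}{x_2-x_1}(x-x_1)+p$, which is smooth on $\overline{I}$ with $\ell(x_1)=p$, $\ell(x_2)=q$, $\ell'\equiv \frac{q-p}{x_2-x_1}$, and $\ell''\equiv 0$. A strong solution $\widetilde{y}$ of \eqref{bvpstrong} exists (resp.\ is unique) if and only if $y := \widetilde{y}-\ell$ is a strong solution (resp.\ unique) of the homogeneous problem \eqref{strongequ} with forcing term
\[
f := \widetilde{f}-b\ell'-c\ell.
\]
Since $b,c \in L^1(I)$ and $\ell$ is bounded on $\overline{I}$ with constant derivative, we have $f \in L^1(I)$, so Theorem \ref{basictheo}(ii) yields a unique $y \in H^{1,2}_0(I)\cap H^{2,1}(I) \cap C^1(\overline{I})$ solving \eqref{strongequ}. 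Setting $\widetilde{y}:=y+\ell$ produces the unique strong solution in $H^{2,1}(I) \cap C^1(\overline{I})$ of \eqref{bvpstrong}.

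For the $L^2$-estimate, I apply the triangle inequality $\|\widetilde{y}\|_{L^2(I)} \leq \|y\|_{L^2(I)} + \|\ell\|_{L^2(I)}$. Using $\|\ell\|_{L^\infty(I)} \leq |p|\vee|q|$ gives $\|\ell\|_{L^2(I)} \leq |I|^{1/2}(|p|\vee|q|)$, which accounts for the trailing $|I|^{1/2}$ term in $K_2$. For the main contribution I invoke Theorem \ref{energyestim} applied to $y$ with right-hand side $f$, using the bound in terms of $\|f\|_{L^2(I)}$ for the $\widetilde{f}$ piece and the bound in terms of $\|f\|_{L^1(I)}$ for the lower-order pieces. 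Specifically, estimating
\[
\|f\|_{L^1(I)} \leq \|\widetilde{f}\|_{L^1(I)} + |\ell'|\cdot\|b\|_{L^1(I)} + \|\ell\|_{L^\infty(I)}\cdot\|c\|_{L^1(I)},
\]
combined with $|\ell'|\cdot|I| = |p-q|$ and $\|\ell\|_{L^\infty(I)} \leq |p|\vee |q|$, and using the appropriate $|I|^{3/2}$ prefactor from \eqref{linfestim}, yields the two summands inside the parentheses defining $K_2$; the $|I|^2\|\widetilde{f}\|_{L^2(I)}$ contribution gives $K_1$.

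For the last claim, when $b,c \in C(\overline{I})$, the forcing $f = \widetilde{f}-b\ell'-c\ell$ belongs to $C(\overline{I})$ whenever $\widetilde{f}$ does; however, the statement only assumes $b,c \in C(\overline{I})$, which is enough because then $by'+cy-f = \varepsilon y''$ lies in $C(\overline{I}) \cap L^1(I)$ arguments reduce exactly to Theorem \ref{basictheo}(iii). Since $\ell \in C^\infty(\overline{I})$, adding $\ell$ preserves the $C^2(\overline{I})$ regularity, and $\widetilde{y}=y+\ell$ satisfies the ODE pointwise on $I$, yielding the unique classical solution.

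I do not anticipate a serious obstacle: the argument is essentially linear-algebraic (the map $\widetilde{y}\mapsto \widetilde{y}-\ell$ is an affine bijection between the solution sets). The only point requiring care is the bookkeeping of the constants $K_1, K_2$ so that they match the claimed form, particularly in correctly distributing the $|I|$-powers between the $L^1$- and $L^2$-based estimates of Theorem \ref{energyestim}, and in handling the absorption $|p-q| \leq 2(|p|\vee|q|)$ (or the sharper bound implicit in the stated constant) when estimating the $\ell'$ contribution.
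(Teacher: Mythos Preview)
Your proposal is correct and follows essentially the same route as the paper: reduce to the homogeneous problem via the affine lift $\ell$, invoke Theorem~\ref{basictheo} for existence and uniqueness, and read off the $L^2$-bound from Theorem~\ref{energyestim} applied to $y=\widetilde{y}-\ell$ with forcing $\widetilde{f}-b\ell'-c\ell$. The only cosmetic difference is that the paper passes through $\|\widetilde{y}\|_{L^\infty(I)}$ before dropping to $L^2$, whereas you split directly in $L^2$; both give the same constants, and your caution about the absorption $|p-q|\leq 2(|p|\vee|q|)$ in matching $K_2$ is well placed (the paper's bookkeeping here is terse).
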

\noindent
\begin{proof}
Let $\ell(x):=\frac{q-p}{x_2-x_1}(x-x_1)+p$, \,$x \in \mathbb{R}$. Then, it is easy to check that
$\|\ell\|_{L^{\infty}(I)} \leq |p|\vee|q|$ and $\|\ell'\|_{L^{\infty}(I)} = |I|^{-1} |p-q|$.
Then, $\widetilde{y} \in H^{2,1}(I) \cap C(\overline{I})$ is a strong solution to \eqref{bvpstrong} if and only if $y \in H^{1,2}_0(I) \cap H^{2,1}(I) \cap C(\overline{I})$ is a strong solution to 
\begin{equation} \label{strongequ2}
\left\{
\begin{alignedat}{2}
& -\varepsilon y''+b y' +c y= \widetilde{f}- b \ell' -c\ell \quad \text{ on }  \,I, \\
&y(x_1)=0, \quad y(x_2)=0,
\end{alignedat} \right.
\end{equation}
where $y = \widetilde{y}-\ell$. Indeed, by Theorem \ref{basictheo} there exists a unique strong solution $y\in H^{1,2}_0(I) \cap H^{2,1}(I) \cap C^1(\overline{I})$ to \eqref{strongequ2}. Define $\widetilde{y}:=y+\ell$.
Then, $\widetilde{y}$ is a unique strong solution $\widetilde{y} \in H^{2,1}(I) \cap C^1(\overline{I})$ to \eqref{bvpstrong}. Moreover, \eqref{linfestim} in Theorem \ref{energyestim} implies that
$$
|I|^{1/2}\|\widetilde{y}-\ell\|_{L^{\infty}(I)} \leq 
\left( \frac{\max_{\overline{I}} \rho}{\varepsilon \min_{\overline{I}} \rho}  \right)  |I|^{3/2} \|\widetilde{f}- b \ell' -c\ell\|_{L^1(I)}.
$$
In particular, if $\widetilde{f} \in L^2(I)$, then
\begin{align*}
&\|\widetilde{y}\|_{L^2(I)} \leq |I|^{1/2} \| \widetilde{y}\|_{L^{\infty}(I)} \leq |I|^{1/2} \| \widetilde{y}-\ell \|_{L^{\infty}(I)}+ |I|^{1/2} \| \ell \|_{L^{\infty}(I)} \\
&\leq\left( \frac{\max_{\overline{I}} \rho}{\varepsilon \min_{\overline{I}} \rho}  \right) \left( |I|^2  \|\widetilde{f}\|_{L^2(I)}  + |I|^{1/2}\|b\|_{L^1(I)} |q-p|  + |I|^{3/2}\|c\|_{L^1(I)} (|p| \vee |q|)	\right)  + |I|^{1/2} (|p| \vee |q|) \\
&\leq  \left( \frac{\max_{\overline{I}} \rho}{\varepsilon \min_{\overline{I}} \rho}  \right) |I|^2 \cdot \|\widetilde{f}\|_{L^2(I)} +\left(\Big( \frac{\max_{\overline{I}} \rho}{\varepsilon \min_{\overline{I}} \rho}  \Big) \Big( |I|^{1/2} \|b\|_{L^1(I)}  +  |I|^{3/2} \|c\|_{L^1(I)}\Big) + |I|^{1/2} \right) (|p| \vee |q|),
\end{align*}
as desired.
\end{proof}

\begin{theo}  \label{contierres}
Let $\varepsilon>0$ be a constant, $b \in L^2(I)$, $c \in L^2(I)$ with $c \geq 0$ on $I$ and $\widetilde{f} \in L^2(I)$, where $I=(x_1, x_2)$ is a bounded open interval. Let $p,q \in \mathbb{R}$ be given and $\widetilde{y} \in H^{2,1}(I) \cap C^1(\overline{I})$ be the unique strong solution to
\eqref{bvpstrong} as in Theorem \ref{exunisol}. 
Let $\Phi \in H^{2,2}(I) \cap C^1(\overline{I})$. Then,
\begin{align*}
\| \widetilde{y}-\Phi \|_{L^2(I)} \leq K_1 \big\| \widetilde{f} - \mathcal{L}[\Phi]   \big \|_{L^2(I)}+ K_2\bigg( \big|p-\Phi(x_1)\big|  \vee \big|q-\Phi(x_2)\big| \bigg),
\end{align*}
where $K_1, K_2>0$ are constants in Theorem \ref{exunisol} and $\mathcal{L}$ is a differential operator defined by
\begin{equation} \label{mathcalloper}
\mathcal{L} [w]:= -\varepsilon w''+bw'+cw, \quad w \in H^{2,1}(I).
\end{equation}
\end{theo}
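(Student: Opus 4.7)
The plan is to exploit the linearity of $\mathcal{L}$ by applying Theorem \ref{exunisol} directly to the difference $w := \widetilde{y} - \Phi$. The key observation is that $w$ itself is the strong solution to a boundary value problem of exactly the form \eqref{bvpstrong}, but with source term $\widetilde{f} - \mathcal{L}[\Phi]$ and boundary values $p - \Phi(x_1)$, $q - \Phi(x_2)$.

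First I would verify that $w$ has the regularity required to invoke Theorem \ref{exunisol}. Since $\widetilde{y} \in H^{2,1}(I) \cap C^1(\overline{I})$ and $\Phi \in H^{2,2}(I) \cap C^1(\overline{I}) \subset H^{2,1}(I) \cap C^1(\overline{I})$ (using that $I$ is bounded so $L^2(I) \hookrightarrow L^1(I)$), we get $w \in H^{2,1}(I) \cap C^1(\overline{I})$. I would also check that the new source term lies in $L^2(I)$: $\widetilde{f} \in L^2(I)$ by hypothesis, while $\mathcal{L}[\Phi] = -\varepsilon \Phi'' + b\Phi' + c\Phi$ belongs to $L^2(I)$ because $\Phi'' \in L^2(I)$ and because $\Phi, \Phi' \in C(\overline{I}) \subset L^{\infty}(I)$ combined with $b, c \in L^2(I)$ forces $b\Phi', c\Phi \in L^2(I)$.

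Next, by direct computation using the linearity of $\mathcal{L}$ and the strong equation for $\widetilde{y}$,
\begin{equation*}
\mathcal{L}[w] = \mathcal{L}[\widetilde{y}] - \mathcal{L}[\Phi] = \widetilde{f} - \mathcal{L}[\Phi] \quad \text{ a.e.\ on } I,
\end{equation*}
with $w(x_1) = p - \Phi(x_1)$ and $w(x_2) = q - \Phi(x_2)$. By the uniqueness part of Theorem \ref{exunisol}, $w$ is the unique strong solution of this auxiliary Dirichlet problem, and the $L^2$-estimate from that theorem applied to $w$ (with $\widetilde{f}$ replaced by $\widetilde{f} - \mathcal{L}[\Phi]$ and $p, q$ replaced by $p - \Phi(x_1), q - \Phi(x_2)$) yields precisely the claimed bound with the same constants $K_1$ and $K_2$.

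There is no real obstacle here; the argument is essentially a substitution exercise. The only points requiring care are the regularity checks above, and recognizing that $K_1, K_2$ depend only on $\varepsilon, b, c$ and the geometry of $I$, so they are unchanged when we replace the data by the residual data. This observation is exactly the mechanism sketched around \eqref{contityestim} in the introduction, now made rigorous via Theorem \ref{exunisol}.
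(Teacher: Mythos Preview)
Your proposal is correct and follows essentially the same route as the paper: define the difference $u=\widetilde{y}-\Phi$, observe it solves the boundary value problem with data $\widetilde{f}-\mathcal{L}[\Phi]$ and shifted boundary values, and apply the $L^2$-estimate of Theorem~\ref{exunisol}. Your regularity checks are actually a bit more careful than the paper's terse version, which simply asserts $u\in H^{2,2}(I)\cap C^1(\overline{I})$ and $\widetilde{f}-\mathcal{L}[\Phi]\in L^2(I)$ before invoking Theorem~\ref{exunisol}.
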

\noindent
\begin{proof}
Let $u := \widetilde{y}-\Phi$. 
Then, $u \in H^{2,2}(I) \cap C^1(\overline{I})$. Since $\widetilde{f}-\mathcal{L}[\Phi] \in L^2(I)$, it follows from Theorem \ref{basictheo} that $u$ is a (unique) strong solution to the following problem:
\begin{equation*} \label{bvpstrong3}
\left\{
\begin{alignedat}{2}
& -\varepsilon u''+b u' +c u= \widetilde{f}-\mathcal{L}[\Phi] \quad \text{ on }  I, \\
&\; u(x_1)=p-\Phi(x_1), \quad u(x_2)=q-\Phi(x_2).
\end{alignedat} \right.
\end{equation*}
By Theorem \ref{exunisol},
$$
\|u\|_{L^2(I)} \leq K_1 \| \widetilde{f}-\mathcal{L}[\Phi] \|_{L^2(I)} + K_2\bigg( \big|p-\Phi(x_1)\big|  \vee \big|q-\Phi(x_2)\big| \bigg),
$$
as desired.
\end{proof}

\begin{theo} \label{pinn2eneres}
Let $\varepsilon>0$ be a constant, $b \in L^2(I)$, $c \in L^2(I)$ with $c \geq 0$ on $I$ and $\widetilde{f} \in L^2(I)$, where $I=(x_1, x_2)$ is a bounded open interval. Let $p,q \in \mathbb{R}$ be given.
Let $\widetilde{y} \in H^{2,1}(I) \cap C^1(\overline{I})$ be the unique strong solution to
\eqref{bvpstrong} as in Theorem \ref{exunisol}. Let $\Phi_A \in H^{1,2}_0(I) \cap H^{2,2}(I) \cap C^1(\overline{I})$ and $\ell(x):=\frac{q-p}{x_2-x_1}(x-x_1)+p$, $x \in \mathbb{R}$. Then,
\begin{align*}
\big\| \widetilde{y} - (\Phi_A +\ell)  \big\|_{L^2(I)} \leq \left( \frac{\max_{\overline{I}} \rho}{\varepsilon\min_{\overline{I}} \rho}  \right) |I|^2 \big\|\widetilde{f} -\mathcal{L}[\Phi_A+\ell]\big\|_{L^2(I)},
\end{align*}
where $\mathcal{L}$ is a differential operator defined by \eqref{mathcalloper}.
\end{theo}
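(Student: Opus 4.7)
The plan is to set $u := \widetilde{y} - (\Phi_A + \ell)$ and to reduce the estimate to the homogeneous Dirichlet problem, so that Theorem \ref{energyestim} applies directly with no boundary-mismatch term remaining. The whole point of including $\ell$ and requiring $\Phi_A \in H^{1,2}_0(I)$ in the trial function $\Phi_A + \ell$ is that $\Phi_A + \ell$ matches the boundary data of $\widetilde{y}$ exactly, so the $K_2$-term from Theorem \ref{contierres} collapses.

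First, I would verify the regularity of $u$. By Theorem \ref{exunisol}, $\widetilde{y} \in H^{2,1}(I) \cap C^1(\overline{I})$; by hypothesis $\Phi_A \in H^{1,2}_0(I) \cap H^{2,2}(I) \cap C^1(\overline{I})$; and $\ell$ is affine and hence smooth. Therefore $u \in H^{2,2}(I) \cap C^1(\overline{I})$. Next I would check the boundary values: since $\Phi_A \in H^{1,2}_0(I)$, the characterization of $H^{1,r}_0(I)$ recalled after Proposition \ref{basicbrezis} gives $\Phi_A(x_1) = \Phi_A(x_2) = 0$, and by construction $\ell(x_1) = p$, $\ell(x_2) = q$. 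Hence
\begin{equation*}
u(x_1) = \widetilde{y}(x_1) - \Phi_A(x_1) - \ell(x_1) = p - 0 - p = 0,
\end{equation*}
and similarly $u(x_2) = 0$. Applying once more the characterization of $H^{1,2}_0(I)$, we conclude $u \in H^{1,2}_0(I) \cap H^{2,2}(I) \cap C^1(\overline{I})$.

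Second, by linearity of $\mathcal{L}$,
\begin{equation*}
\mathcal{L}[u] = \mathcal{L}[\widetilde{y}] - \mathcal{L}[\Phi_A + \ell] = \widetilde{f} - \mathcal{L}[\Phi_A + \ell] \quad \text{a.e. on } I,
\end{equation*}
so $u$ is a strong solution of the homogeneous Dirichlet problem \eqref{strongequ} with right-hand side $g := \widetilde{f} - \mathcal{L}[\Phi_A + \ell] \in L^2(I)$. By the uniqueness assertion in Theorem \ref{basictheo}(ii), $u$ coincides with the strong solution produced there, so the energy estimate Theorem \ref{energyestim} is available for $u$ and $g$. Invoking the chain of inequalities in \eqref{linfestim} gives
\begin{equation*}
\|u\|_{L^2(I)} \leq \left(\frac{\max_{\overline{I}} \rho}{\varepsilon \min_{\overline{I}} \rho}\right) |I|^2 \|g\|_{L^2(I)},
\end{equation*}
which is exactly the claimed bound.

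There is no serious obstacle here: the theorem is a clean corollary of Theorem \ref{basictheo} and Theorem \ref{energyestim}, once one exploits the design of the trial function $\Phi_A + \ell$. The only point that deserves care is confirming that the hypothesis $\Phi_A \in H^{1,2}_0(I)$ really forces the pointwise values $\Phi_A(x_1) = \Phi_A(x_2) = 0$, so that the boundary contribution disappears and the energy estimate is applied with the sharp constant $(\max_{\overline{I}} \rho)/(\varepsilon \min_{\overline{I}} \rho)\cdot |I|^2$ rather than the weaker bound available via Theorem \ref{contierres}.
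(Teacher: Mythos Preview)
Your proof is correct and follows essentially the same idea as the paper: exploit that $\Phi_A+\ell$ matches the boundary data of $\widetilde{y}$ exactly, so that the difference $u=\widetilde{y}-(\Phi_A+\ell)$ solves the homogeneous Dirichlet problem and the energy estimate of Theorem~\ref{energyestim} applies with no boundary term. The paper's version is even terser---it simply invokes Theorem~\ref{contierres} with $\Phi:=\Phi_A+\ell$ and notes that the $K_2$-term vanishes---which gives the identical constant $K_1=\left(\frac{\max_{\overline{I}}\rho}{\varepsilon\min_{\overline{I}}\rho}\right)|I|^2$, so your concern about a ``weaker bound via Theorem~\ref{contierres}'' is unfounded; the two routes yield the same estimate.
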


\noindent
\begin{proof}
Since $(\Phi_A +\ell)(x_1)=p$ and $(\Phi_A +\ell)(x_2)=q$, the assertion follows from Theorem \ref{contierres} where 
$\Phi$ there is replaced by $\Phi_A+\ell$ here.
\end{proof}

\centerline{}
\subsection{Numerical experiments for PINN with energy estimates} \label{basicexperime}
\noindent
Let $\varepsilon>0$ be a constant, $b \in C(\overline{I})$, $c \in C(\overline{I})$ with $c \geq 0$ on $I$ and $\widetilde{f} \in C(\overline{I})$, where $I=(x_1, x_2)$ is a bounded open interval. Let $p,q \in \mathbb{R}$ be given and $\rho$ be the function defined in \eqref{defnrhofun}.
Let $\widetilde{y} \in C^2(\overline{I})$ be the unique classical solution to
\eqref{bvpstrong} as in Theorem \ref{exunisol}.  Let $(X_i)_{i \geq 1}$ be a sequence of independent and identically distributed random variables on a probability space $(\Omega, \mathcal{F}, \mathbb{P})$  that has a continuous uniform distribution on $I$.
Let $\Phi \in C^2(\overline{I})$. Then, by using Theorem \ref{contierres} with the Monte Carlo integration as in \eqref{montecarint}, we obtain that for sufficiently large $n \in \mathbb{N}$
\begin{align}
&{\text{\sf (PINN I):}}  \nonumber \\
&  \underbrace{\frac{1}{n} \sum_{i=1}^n |\widetilde{y}(X_i)-\Phi(X_i)|^2}_{=: {\sf Error}} \leq \left(\frac{\max_{\overline{I}} \rho}{ \varepsilon \min_{\overline{I}} \rho} \right)^2  
 |I|^4  \cdot \left(  \frac{1}{n} \sum_{i=1}^n \Big(\widetilde{f}(X_i)-\mathcal{L}[\Phi](X_i) \Big)^2 \right)  \nonumber \\
& \quad + \left(\Big(\frac{\max_{\overline{I}} \rho}{\varepsilon \min_{\overline{I}} \rho}
 \Big) \Big(\|b\|_{L^1(I)}+ |I| \|c\|_{L^1(I)}\Big)+1 \right)^2 \bigg( \underbrace{ \big|p-\Phi(x_1)\big|^2  + \big|q-\Phi(x_2)\big|^2}_{=:{\sf Boundary \; Loss}} \bigg)  \nonumber \\
& \leq  \Bigg(\left(\frac{\max_{\overline{I}} \rho}{ \varepsilon \min_{\overline{I}} \rho} \right)^2  
 |I|^4+ \left(\Big(\frac{\max_{\overline{I}} \rho}{\varepsilon \min_{\overline{I}} \rho}
 \Big) \Big(\|b\|_{L^1(I)}+ |I| \|c\|_{L^1(I)}\Big)+1 \right)^2 \Bigg) \times \nonumber \\
& \qquad \quad \Bigg(  \underbrace{\frac{1}{n} \sum_{i=1}^n \Big(\widetilde{f}(X_i)-\mathcal{L}[\Phi](X_i) \Big)^2 +\, \big|p-\Phi(x_1)\big|^2  + \big|q-\Phi(x_2)\big|^2}_{=: \sf Loss}  \Bigg) , \qquad \text{very likely.} \quad \label{pinn1erbdd}
\end{align}

\centerline{}
\noindent
On the other hand, let $\Phi_A \in H^{1,2}_0(I) \cap C^2(\overline{I})$ and $\ell(x):=\frac{q-p}{x_2-x_1}(x-x_1)+p$, $x \in \mathbb{R}$. 
Then, by using Theorem \ref{pinn2eneres} with the Monte Carlo integration as in \eqref{montecarint}, we find that for sufficiently large $n \in \mathbb{N}$
\begin{align}
&{\text{\sf (PINN II):}} \nonumber \\
&  \underbrace{\frac{1}{n} \sum_{i=1}^n |\widetilde{y}(X_i)-(\Phi_A+\ell)(X_i)|^2}_{=: {\sf Error}} \leq \left(\frac{\max_{\overline{I}} \rho}{\varepsilon \min_{\overline{I}} \rho}
 \right)^2  |I|^4 \cdot  \underbrace{\frac{1}{n} \sum_{i=1}^n \Big(\widetilde{f}(X_i)-\mathcal{L}[\Phi_A+\ell](X_i) \Big)^2}_{=: {\sf Loss}}, \quad \text{ very likely}. \qquad \qquad \label{engpinn2erbdd}
\end{align} 
\noindent
Let $N(x; \theta)$ be a real-valued neural network defined on $\mathbb{R}$ and let $\chi(x)=-(x-x_1)(x-x_2)$.
 In the following example we will apply {\sf (PINN I)} and {\sf (PINN II)} where $\Phi(x)$ and  $\Phi_A(x)+\ell(x)$ are replaced by
 $N(x;\theta)$ and $\chi(x) N(x; \theta)+\ell(x)$, respectively. Both methods can reduce the error by reducing {\sf Loss}, but {\sf (PINN I)} has a limitation in that the discrepancy of the boundary value remains. In addition, the following example confirms that {\sf (PINN II)} is superior to {\sf (PINN I)} in terms of error reduction performance.
 \newpage
\begin{exam} \label{pinn12compare}
\rm
Let $I=(0, 1)$. Let $\varepsilon:=1$, $c(x)=10$, $b(x)=x$ and $\widetilde{y}(x)=x(1-x) e^x$,  $x \in \mathbb{R}$. 
Note that
$$
\widetilde{y}'(x) = (1-x-x^2)e^x, \; \;\;  \widetilde{y}''(x)=(-3x -x^2)e^x, \quad x \in \mathbb{R}.
$$
Let $\widetilde{f}(x)= - \widetilde{y}''(x) + b(x) \widetilde{y}'(x) + c(x) \widetilde{y}(x)= \big( (4+c)x -cx^2-x^3   \big) e^x$, \,$x \in \mathbb{R}$. Then, $\widetilde{y}$ is a unique strong solution to \eqref{bvpstrong} with $p=q=0$. \\ \\
For the following experiment, a three-layer neural network with an architecture configured as $(1, 16, 32, 1)$ was utilized. The hyperbolic tangent function ($\tanh$) was used as the activation function throughout the network. The Adam optimizer was used with an initial learning rate of 0.1 and batch size of 10000. To optimize training, an exponential learning rate scheduler was attached, which methodically decreased the learning rate at each epoch to ensure convergence.
\begin{figure}[!h]      
     \centering
     \begin{subfigure}[b]{0.42\textwidth} 
         \centering
         \includegraphics[width=\textwidth]{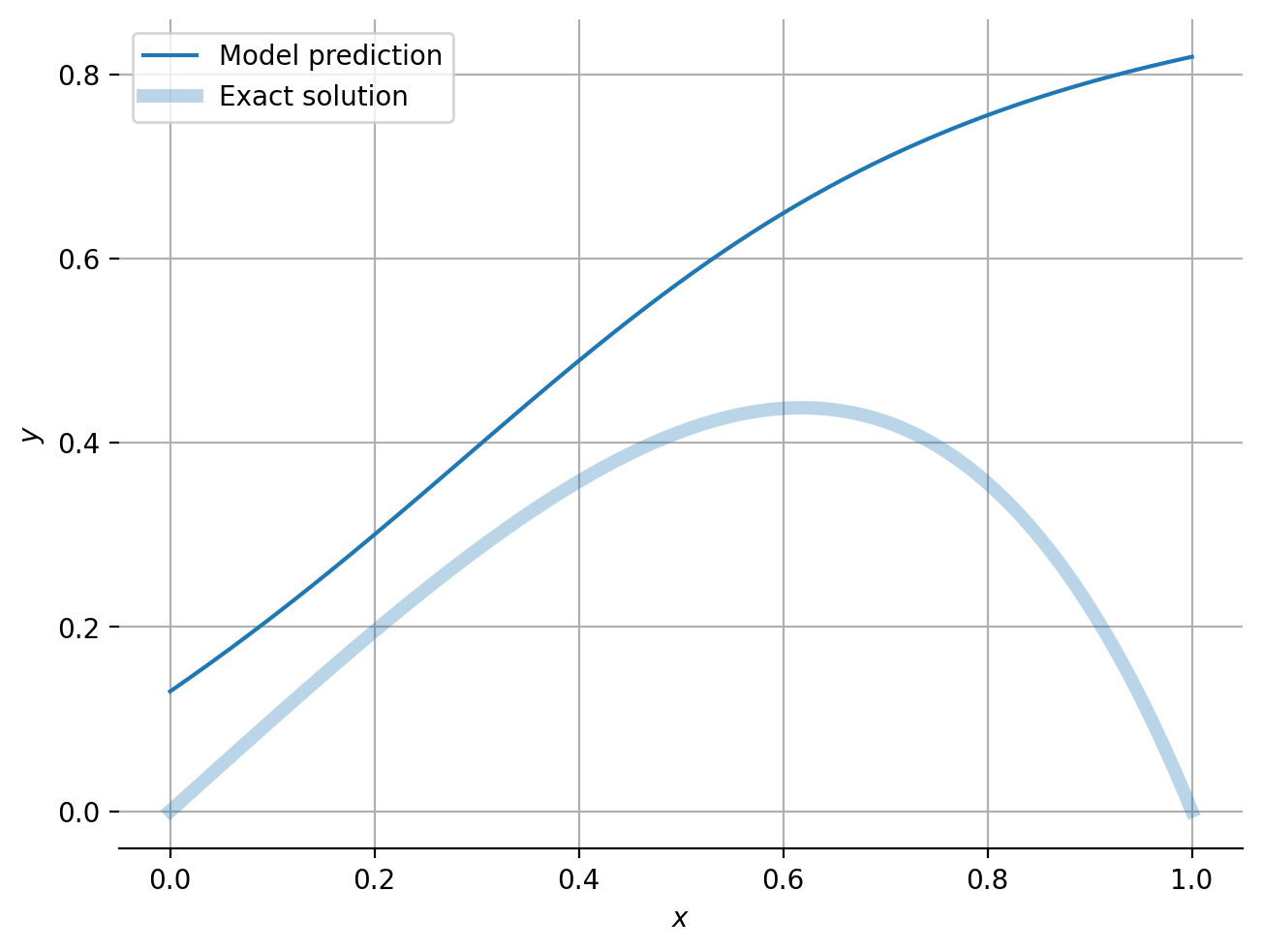}
	\caption{Model prediction comparison with exact solution after 500 epochs  {\sf (PINN I)}}
	 \label{figpinn1per(a)}
     \end{subfigure}
     \hfill
     \begin{subfigure}[b]{0.42\textwidth} \label{figpinn1per(b)}
         \centering
         \includegraphics[width=\textwidth]{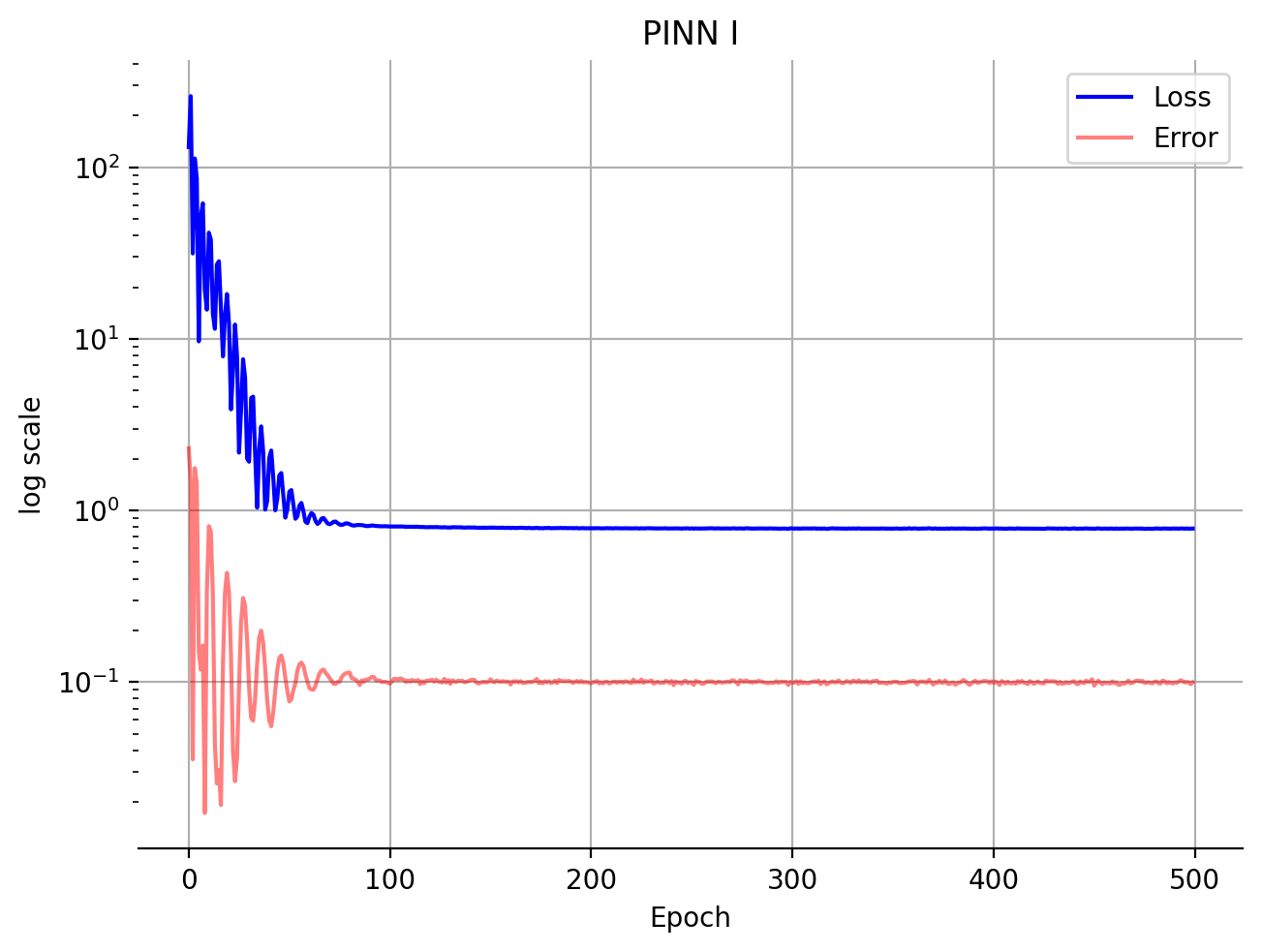}
	\caption{The behaviors of {\sf Loss} and {\sf Error} as epoch increases {\sf (PINN I)}}
         \label{fig:result}
     \end{subfigure}
     \caption{Not effective {\sf (PINN I)}}
     \label{figpinn1per}
\end{figure}      
\begin{figure}[!h]         
     \centering
     \begin{subfigure}[b]{0.42\textwidth}
         \centering
         \includegraphics[width=\textwidth]{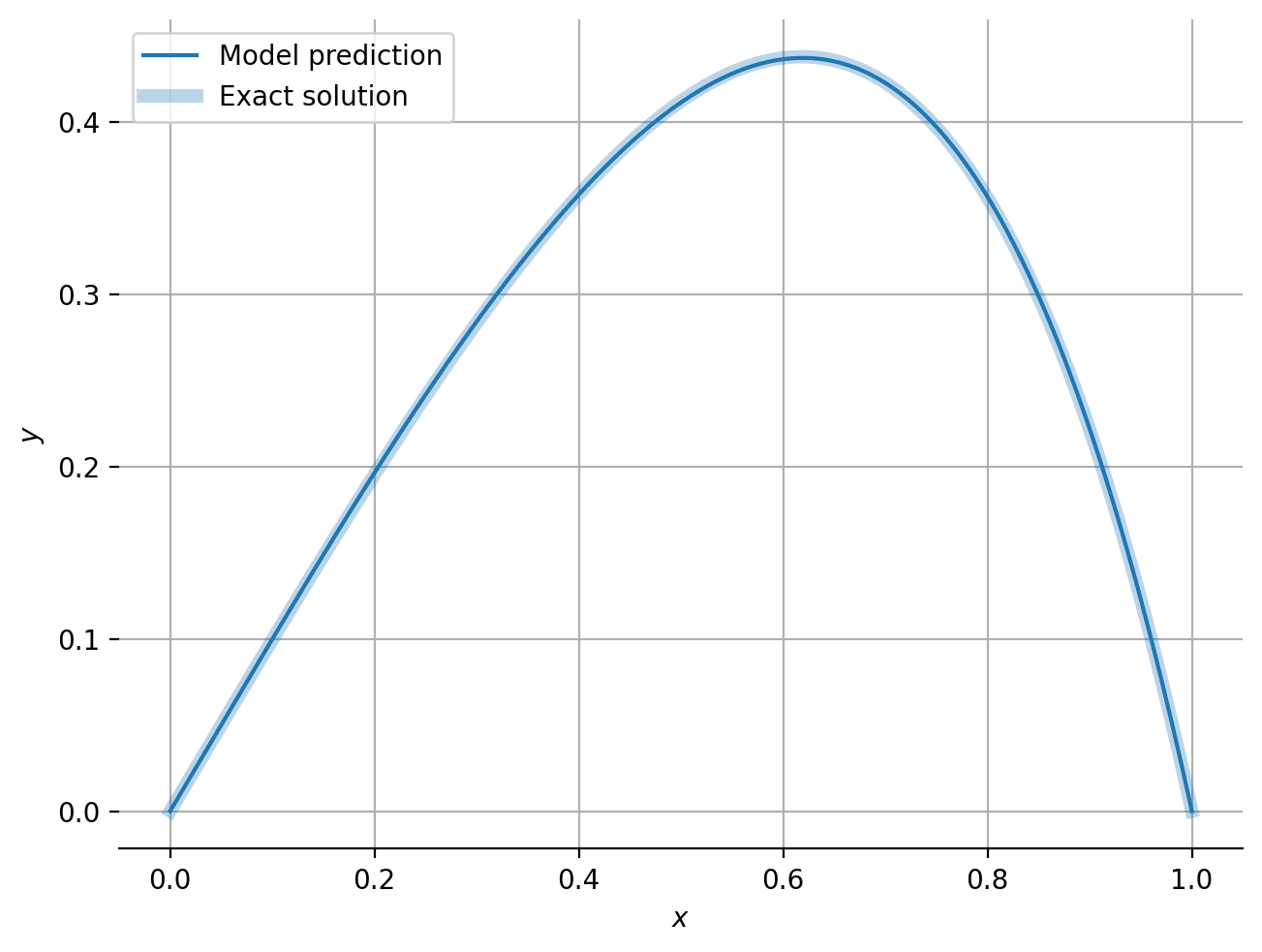}
	\caption{Model prediction comparison with exact solution after 500 epochs {\sf (PINN II)}}
     \end{subfigure}
     \hfill
     \begin{subfigure}[b]{0.42\textwidth}
         \centering
         \includegraphics[width=\textwidth]{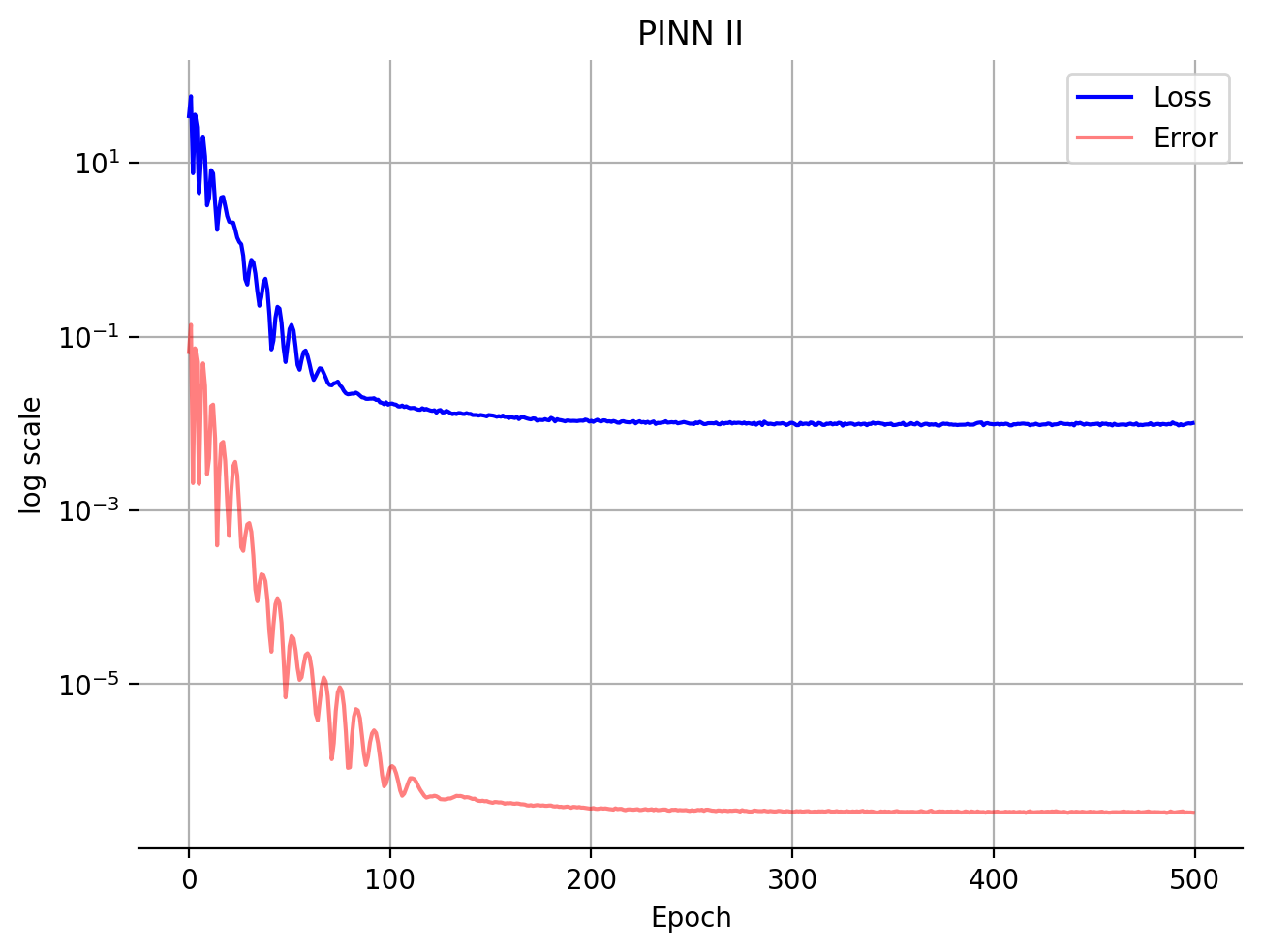}
	\caption{The behaviors of {\sf Loss} and {\sf Error} as epoch increases {\sf (PINN II)}}
         \label{fig:result}
     \end{subfigure}
     \caption{Successful {\sf (PINN II)}}
	\label{figpinn2per}
\end{figure}
\begin{figure}[!h]
     \centering
     \includegraphics[width=0.43\textwidth]{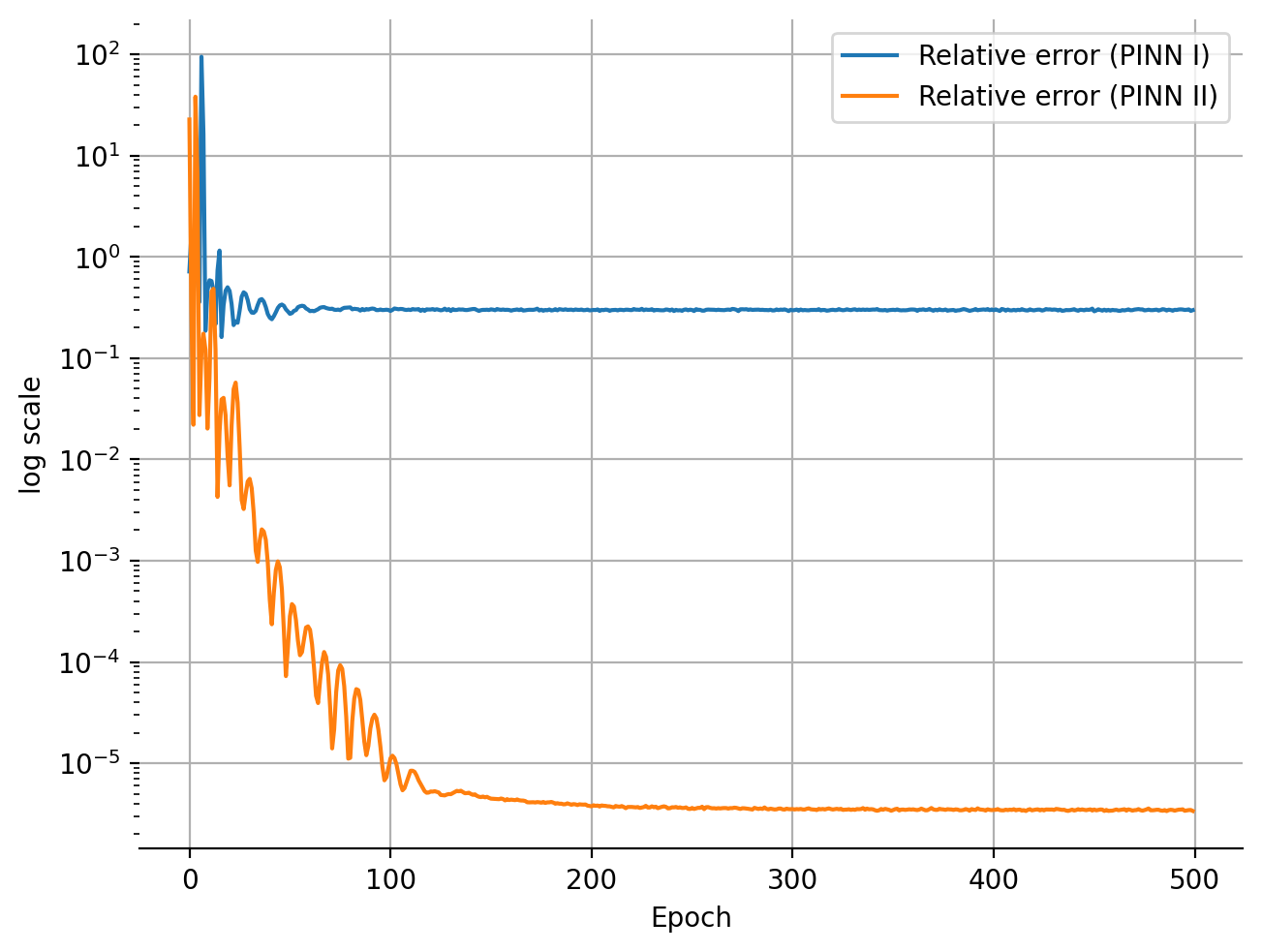}
	\caption{Comparison of {\sf Relative error} between two methods as epoch increases}
     \label{fig:Comp}
\end{figure}
\text{}\\
Figure \ref{figpinn1per} (a) describes the graph of the model prediction (neural network $N(x; \theta))$ and the exact solution ($\widetilde{y}(x)$) on $I$ where the model prediction does not accurately capture the exact solution. Whereas, Figure \ref{figpinn2per} (a) describes the graph of the model prediction ($x(1-x)N(x, \theta))$ and the exact solution ($\widetilde{y}(x)$) for which the model prediction accurately captures the exact solution. Figure \ref{figpinn1per} (b) and Figure \ref{figpinn2per} (b)  describe the graphs of {\sf Loss} and {\sf Error} along epoch increases, respectively and we can observe that {\sf (PINN II)} is more efficient than {\sf (PINN I)} in reducing {\sf Error} and {\sf Loss}.  Furthermore, Figure \ref{fig:Comp} describes that {\sf (PINN II)} gives a more precise model prediction compared to {\sf (PINN I)} in the perspective of {\sf Relative error}. The {\sf Relative errors} are defined as
\begin{align*}
{\sf Relative\,error \,(PINN\, I)}&=\frac{\sum_{i=1}^n |\widetilde{y}(X_i)-\Phi(X_i)|^2}{\sum_{i=1}^n |\widetilde{y}(X_i)|^2}, \\
{\sf Relative\,error \,(PINN\, II)}&=\frac{\sum_{i=1}^n |\widetilde{y}(X_i)-(\Phi_A+\ell)(X_i)|^2}{\sum_{i=1}^n |\widetilde{y}(X_i)|^2}.
\end{align*}
\end{exam}
\noindent
In Example \ref{pinn12compare}, we can observe that {\sf (PINN I)} exhibit significant boundary discrepancies, where {\sf Boundary Loss} is larger than {\sf Error}. In contrast, {\sf (PINN II)} accurately captures the exact solution. Therefore, from now on we exclusively use {\sf (PINN II)} where the trial function $\Phi_A+\ell $ exactly satisfies the boundary conditions of \eqref{bvpstrong}.

\centerline{}

\begin{exam} \label{pinn12newex}
\rm
Let $I=(0, 1)$. Let $\varepsilon:=1$, $c(x)=\lambda$, $b(x)=\tan \frac{\pi x}{6}$ and $\widetilde{y}(x)=\sin \frac{\pi x}{6}$,  $x \in \mathbb{R}$. 
Note that
$$
\widetilde{y}\,'(x) = \frac{\pi}{6}\cos \frac{\pi x}{6}, \; \; \;\;  \widetilde{y}\,''(x)= -\frac{\pi^2}{36} \sin \frac{\pi}{6}, \quad x \in \mathbb{R}.
$$
Let $\widetilde{f}(x)= - \widetilde{y}''(x) + b(x) \widetilde{y}'(x) + c(x) \widetilde{y}(x)= \Big(\frac{\pi^2}{36} + \frac{\pi}{6} +\lambda\Big) \sin \frac{\pi}{6}x$, \,\,$x \in \mathbb{R}$. Then, $\widetilde{y}$ is a unique strong solution to \eqref{bvpstrong} with $p=0$ and $q=\frac{1}{2}$.
\end{exam}
\noindent
For the following experiment, a three-layer neural network with an architecture configured as $(1, 50, 50, 1)$ was utilized to describe a solution for a more complex differential equation. All experimental details are identical to those described in Example \ref{pinn12compare}.\\

\begin{figure}[!h]
     \centering
     \begin{subfigure}[b]{0.43\textwidth}
         \centering
         \includegraphics[width=\textwidth]{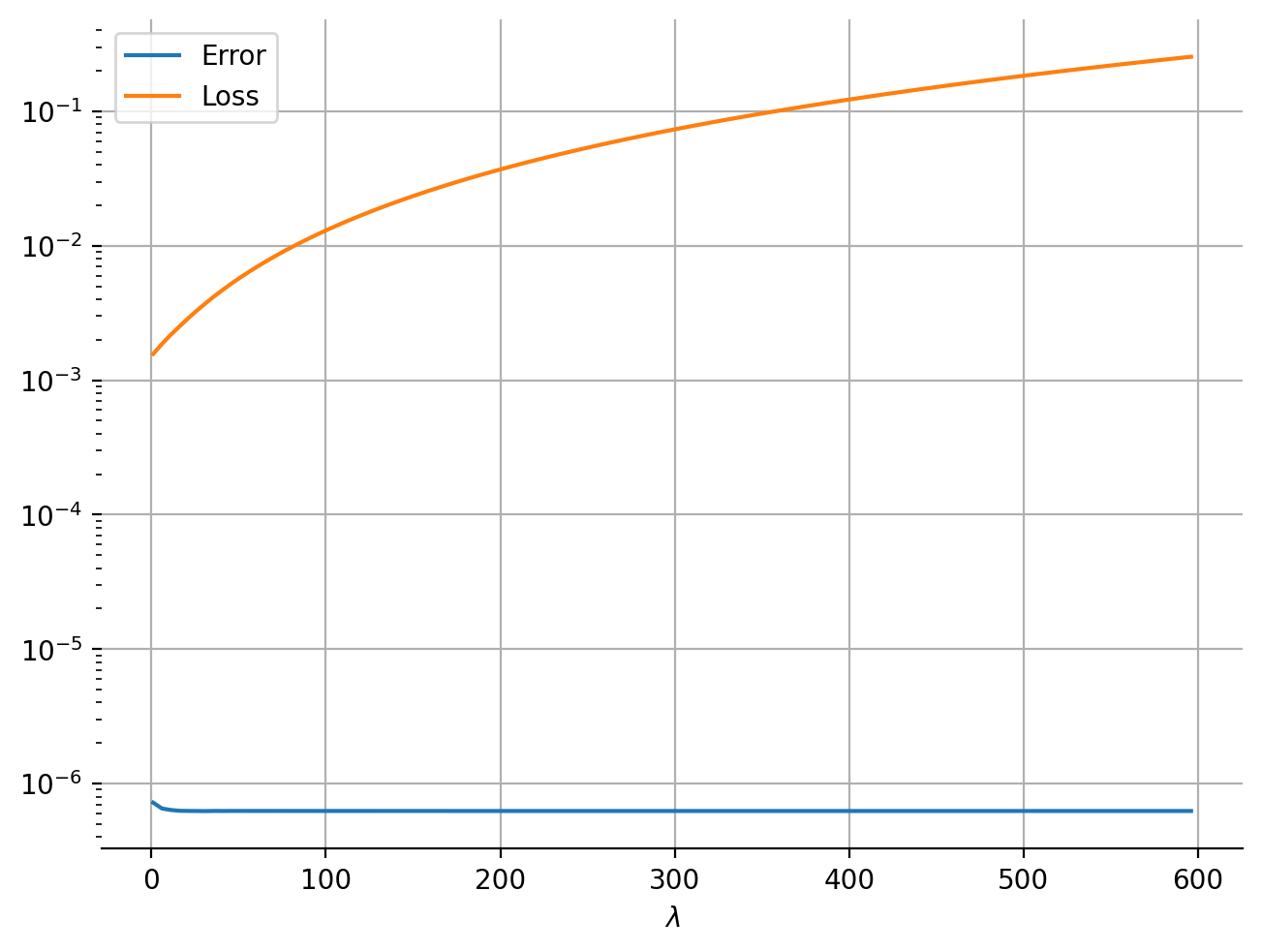}
	\caption{{\sf Loss} and {\sf Error} as $\lambda$ increases after 500 epochs as $\lambda$ increases}
         \label{fig:train}
     \end{subfigure}
     \hfill
     \begin{subfigure}[b]{0.43\textwidth}
         \centering
         \includegraphics[width=\textwidth]{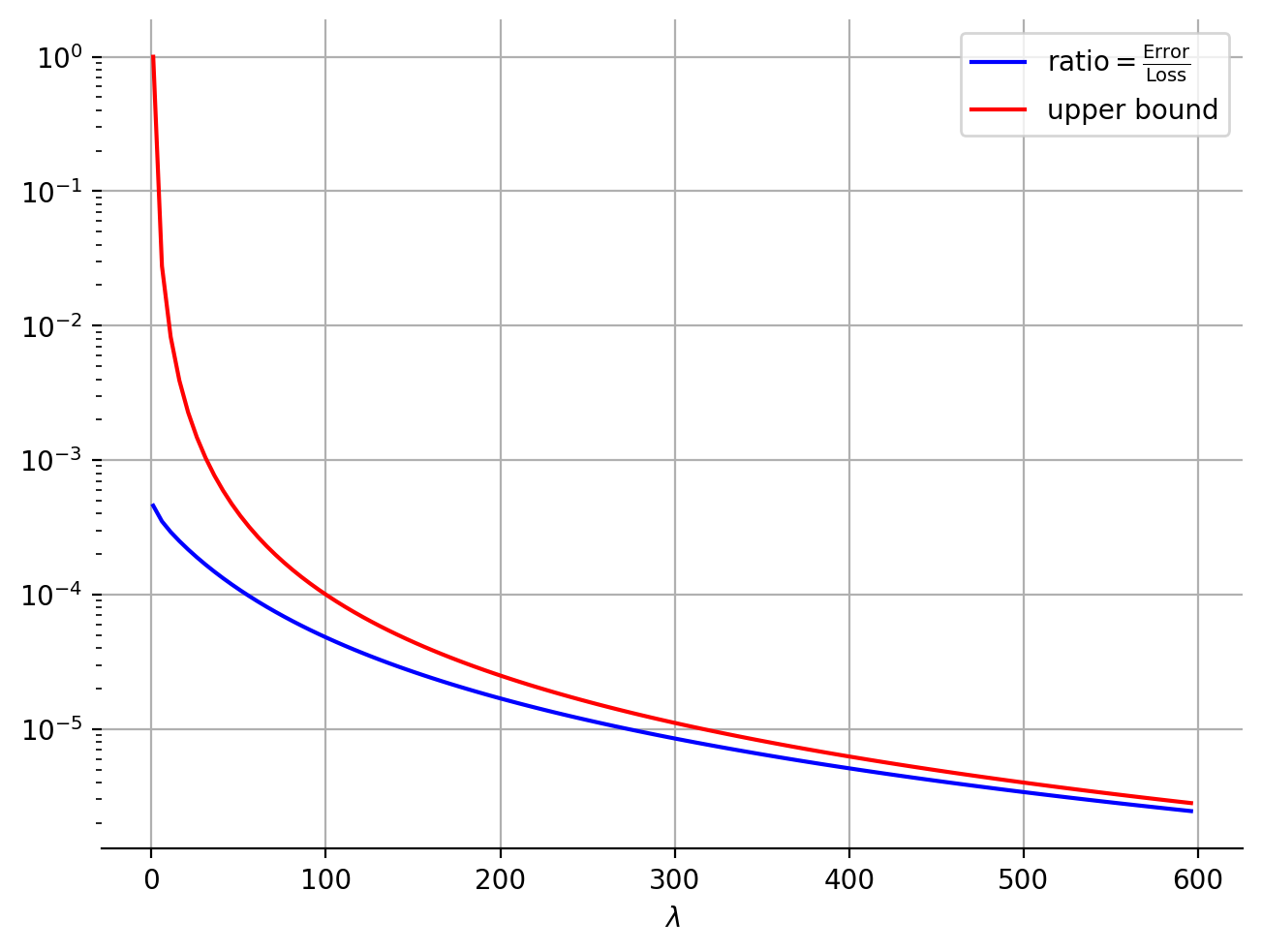}
	\caption{$\frac{\sf Error}{\sf Loss}$ bounded by the upper bound $\frac{1}{\lambda^2}$ as $\lambda$ increases}
         \label{fig:result}
     \end{subfigure}
     \caption{As $\lambda$ increases, {\sf Loss} increases but {\sf Error} remains robust due to the decrease of $\frac{\sf Error}{\sf Loss}$}
     \label{figweighestim}
\end{figure}
\noindent
Increasing the zero-order coefficient $\lambda$ leads to a more robust {\sf Error}, although the {\sf Loss} increases as well. We investigated this phenomenon and found that the ratio $\frac{\sf Error}{\sf Loss}$ decreases, and this ratio is sharply bounded by the function $\frac{1}{\lambda^2}$. However, this observation is not explained by our energy estimates \ref{engpinn2erbdd}, as it does not account for zero-order terms. This discrepancy suggests the need for further investigation into this topic.
\section{Error analysis via $L^2(I, \mu)$-contraction estimates} \label{mucontraestim}
\subsection{$L^2(I, \mu)$-contraction estimates}
\begin{theo}[$L^2(I, \mu)$-contraction estimates] \label{contraesthm}
 Let $\varepsilon>0$ be a constant, $b \in L^2(I)$, $c \in L^2(I)$ with $c \geq \lambda$ on $I$ for some constant $\lambda>0$ and $f \in L^2(I)$, where $I$ is a bounded open interval. Let
$\rho$ be a function defined in \eqref{defnrhofun},
where $b$ above is considered as the zero extension of $b$ in $\mathbb{R}$. Let $y \in H^{1,2}_0(I) \cap H^{2,1}(I) \cap C^1(\overline{I})$ be a unique strong solution to \eqref{strongequ} as in Theorem \ref{basictheo}. Then,
\begin{equation} \label{contraresoles}
\| y\|_{L^2(I, \mu)} \leq \frac{1}{\lambda} \|f\|_{L^2(I, \mu)},
\end{equation}
where $\mu=\rho dx$. In particular,
\begin{equation*} \label{contrardxchan}
\| y\|_{L^2(I)} \leq \left(\frac{\max_{\overline{I}}\rho}{\min_{\overline{I}} \rho } \right)^{1/2}\frac{1}{\lambda} \|f\|_{L^2(I, \mu)} \leq \exp\left(\int_{I} \frac{1}{2\varepsilon} |b| dx \right) \frac{1}{\lambda} \|f\|_{L^2(I)}.
\end{equation*}
\end{theo}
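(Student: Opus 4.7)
The plan is to run a direct energy argument on the weighted variational form, using $y$ itself as the test function. From Theorem \ref{basictheo}(i), $y \in H^{1,2}_0(I)$ satisfies
\[
\int_{I} \varepsilon y' \psi' \rho\, dx + \int_I c\,y\, \psi\, \rho\, dx = \int_I f\, \psi\, \rho\, dx, \qquad \forall \psi \in H^{1,2}_0(I).
\]
Since $y \in H^{1,2}_0(I)$, the substitution $\psi=y$ is legitimate, which is the key move.

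Plugging in $\psi=y$ gives
\[
\varepsilon \int_I |y'|^2 \rho\, dx + \int_I c\, y^2\, \rho\, dx = \int_I f\, y\, \rho\, dx.
\]
The first term on the left is non-negative (here we use $\varepsilon>0$ and $\rho>0$ on $\overline{I}$), so it can be dropped. Using the pointwise lower bound $c \geq \lambda$ on the second term and then Cauchy--Schwarz with respect to the measure $\mu=\rho\,dx$ on the right,
\[
\lambda \|y\|^{2}_{L^2(I,\mu)} \;\leq\; \int_I c\, y^2\, \rho\, dx \;\leq\; \int_I f\, y\, \rho\, dx \;\leq\; \|f\|_{L^2(I,\mu)}\, \|y\|_{L^2(I,\mu)}.
\]
If $\|y\|_{L^2(I,\mu)}=0$ the inequality \eqref{contraresoles} is trivial; otherwise, dividing by $\|y\|_{L^2(I,\mu)}$ yields $\|y\|_{L^2(I,\mu)} \leq \lambda^{-1}\|f\|_{L^2(I,\mu)}$.

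For the second chain of inequalities I would pass back to the Lebesgue norm by a change-of-measure comparison. From the positivity and continuity of $\rho$ on $\overline{I}$,
\[
\|y\|^{2}_{L^2(I)} \;\leq\; \frac{1}{\min_{\overline{I}}\rho}\, \|y\|^{2}_{L^2(I,\mu)}, \qquad \|f\|^{2}_{L^2(I,\mu)} \;\leq\; (\max_{\overline{I}}\rho)\, \|f\|^{2}_{L^2(I)},
\]
which combined with \eqref{contraresoles} give the first intermediate estimate $\|y\|_{L^2(I)} \leq (\max\rho/\min\rho)^{1/2}\lambda^{-1}\|f\|_{L^2(I,\mu)}$. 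Finally, writing $\rho(x)=\exp(F(x))$ with $F(x)=\int_{x_1}^x -b/\varepsilon\, ds$ and using the fundamental theorem of calculus, the oscillation of $F$ is controlled by $\int_I |b|/\varepsilon\, dx$, so $\max_{\overline{I}}\rho/\min_{\overline{I}}\rho \leq \exp(\int_I |b|/\varepsilon\, dx)$ and the last bound $\exp(\int_I |b|/(2\varepsilon)\, dx)\lambda^{-1}\|f\|_{L^2(I)}$ follows after one more application of $\rho \leq \max_{\overline{I}}\rho$ on the $\|f\|_{L^2(I,\mu)}$ term.

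The only genuinely delicate point is the first step: legitimizing $\psi=y$ in the variational identity, which rests on $y \in H^{1,2}_0(I)$ as supplied by Theorem \ref{basictheo}(i). Every other step is a routine comparison of weighted and unweighted norms and an exponential oscillation bound for $\rho$, so no serious obstacle is expected.
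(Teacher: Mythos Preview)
Your argument is correct and mirrors the paper's proof exactly: the paper also invokes the weighted variational identity from Theorem~\ref{basictheo}, substitutes $\psi=y$, drops the non-negative gradient term, uses $c\geq\lambda$, and applies Cauchy--Schwarz in $L^2(I,\mu)$ to reach \eqref{contraresoles}. The paper is terser about the ``In particular'' part (it just writes ``hence the assertion follows''), whereas you spell out the change-of-measure comparisons and the oscillation bound $\max_{\overline I}\rho/\min_{\overline I}\rho\leq\exp\bigl(\int_I|b|/\varepsilon\,dx\bigr)$; these are exactly the routine steps the paper leaves implicit.
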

\noindent
\begin{proof}
By Theorem \ref{basictheo}, the unique strong solution $y$ to \eqref{strongequ} satisfies that
\begin{equation*} \label{variequim3}
\int_{I} \varepsilon y' \psi' \rho dx + \int_I c y \psi \rho dx  = \int_{I} f \psi \rho dx, \quad \text{ for all $\psi \in H^{1,2}_0(I)$}.
\end{equation*}
Substituting $y$ for $\psi$, we get
$$
\lambda \int_{I} y^2 \rho dx \leq \int_{I} \varepsilon |y'|^2 \rho dx + \int_{I} cy^2 \rho dx = \int_{I} f y \rho dx \leq \left(\int_{I} f^2 \rho dx \right)^{1/2} \left( \int_{I} y^2 \rho dx \right)^{1/2},
$$
and hence the result follows.
\end{proof}

\begin{theo} \label{contrastabiles}
 Let $p, q \in \mathbb{R}$,  $\varepsilon>0$ be constants, $b \in L^2(I)$, $c \in L^2(I)$ with $c \geq \lambda$ on $I=(x_1, x_2)$ for some constant $\lambda>0$ and $f \in L^2(I)$, where $I$ is a bounded open interval. Let
$\rho$ be a function defined in \eqref{defnrhofun}.
Then, the  unique strong solution $\widetilde{y} \in H^{2,1}(I) \cap C^1(\overline{I})$ to \eqref{bvpstrong} as in 
Theorem \ref{exunisol} satisfies
$$
\|\widetilde{y}\|_{L^2(I, \mu)} \leq \frac{1}{\lambda} \| \widetilde{f} \|_{L^2(I, \mu)} +K_3 (|p| \vee |q|),
$$
where $\mu=\rho dx$ and  $K_3= \frac{1}{\lambda}\|b\|_{L^2(I, \mu)}  |I|^{-1}+ \frac{1}{\lambda} \|c\|_{L^2(I, \mu)}+|I|^{1/2}  (\max_{\overline{I}} \rho)^{1/2}$. In particular,
$$
\|\widetilde{y}\|_{L^2(I)} \leq \frac{1}{\lambda}  \left(\frac{\max_{\overline{I}}\rho}{\min_{\overline{I}} \rho } \right)^{1/2}\| \widetilde{f} \|_{L^2(I)} +\left(\frac{\max_{\overline{I}}\rho}{\min_{\overline{I}} \rho } \right)^{1/2} \widetilde{K}_3 (|p| \vee |q|),
$$
where $\widetilde{K}_3=\frac{1}{\lambda}\|b\|_{L^2(I)}  |I|^{-1}+ \frac{1}{\lambda} \|c\|_{L^2(I)}+|I|^{1/2}$.
\end{theo}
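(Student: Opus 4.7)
The strategy is the standard affine reduction used already in the proof of Theorem \ref{exunisol}. Writing $\ell(x) = \frac{q-p}{x_2-x_1}(x-x_1)+p$, I set $y := \widetilde{y} - \ell$. Since $\ell$ is smooth and satisfies $\ell(x_1)=p$, $\ell(x_2)=q$, the uniqueness part of Theorem \ref{basictheo} tells me $y \in H^{1,2}_0(I) \cap H^{2,1}(I) \cap C^1(\overline{I})$ is the unique strong solution of the homogeneous problem
\begin{equation*}
-\varepsilon y''+by'+cy = \widetilde{f} - b\ell' - c\ell \quad \text{on } I, \qquad y(x_1)=y(x_2)=0.
\end{equation*}
Since $b, c, \widetilde{f} \in L^2(I)$ and $\ell, \ell'$ are constants on $I$, the right-hand side lies in $L^2(I)$, so Theorem \ref{contraesthm} (the $L^2(I,\mu)$-contraction estimate) applies directly.

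Applying that theorem and the triangle inequality yields
\begin{equation*}
\|y\|_{L^2(I,\mu)} \leq \frac{1}{\lambda}\|\widetilde{f}\|_{L^2(I,\mu)} + \frac{1}{\lambda}\|b\ell'\|_{L^2(I,\mu)} + \frac{1}{\lambda}\|c\ell\|_{L^2(I,\mu)}.
\end{equation*}
Next I bound each piece using the explicit form of $\ell$. Because $\ell$ is affine with boundary values $p$ and $q$, $\|\ell\|_{L^\infty(I)} = |p| \vee |q|$ and $\|\ell'\|_{L^\infty(I)} = |p-q|/|I|$, so
$\|b\ell'\|_{L^2(I,\mu)} \le |I|^{-1}|p-q|\,\|b\|_{L^2(I,\mu)}$ and $\|c\ell\|_{L^2(I,\mu)} \le (|p| \vee |q|)\|c\|_{L^2(I,\mu)}$. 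Returning to $\widetilde{y} = y+\ell$, one more triangle inequality together with $\|\ell\|_{L^2(I,\mu)} \le \|\ell\|_{L^\infty(I)}\,(\mu(I))^{1/2} \le |I|^{1/2}(\max_{\overline{I}}\rho)^{1/2}(|p|\vee|q|)$ collects the four terms into exactly the claimed form with constant $K_3$ (using $|p-q| \le 2(|p|\vee|q|)$, or simply absorbing the affine coefficient as the authors do elsewhere).

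For the second, $L^2(I,dx)$-version, I use the pointwise comparison of weights: for any $g \in L^2(I)$,
\begin{equation*}
\|g\|_{L^2(I,\mu)} \leq (\max_{\overline{I}}\rho)^{1/2}\|g\|_{L^2(I)}, \qquad \|g\|_{L^2(I)} \leq (\min_{\overline{I}}\rho)^{-1/2}\|g\|_{L^2(I,\mu)}.
\end{equation*}
Applying the lower bound to $\widetilde{y}$ and the upper bound to $\widetilde{f}, b, c$ on the right-hand side factors out $(\max_{\overline{I}}\rho/\min_{\overline{I}}\rho)^{1/2}$ cleanly; the boundary term $|I|^{1/2}(\max_{\overline{I}}\rho)^{1/2}$ becomes $|I|^{1/2}$ after being absorbed into the prefactor, giving $\widetilde{K}_3$ as stated.

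There is no real obstacle: the work is just careful bookkeeping of the four summands and of the weight conversion. The only point to watch is that $\ell$ and $\ell'$ be handled through their $L^\infty$ norms (so that the $\rho$-weight stays with $b$ and $c$ rather than getting attached to $\ell$), which is what produces the weighted $L^2$ norms of $b$ and $c$ in $K_3$.
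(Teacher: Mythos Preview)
Your proposal is correct and follows essentially the same route as the paper: reduce to the homogeneous problem via $y=\widetilde{y}-\ell$, apply Theorem~\ref{contraesthm} to $y$ with right-hand side $\widetilde{f}-b\ell'-c\ell$, split by the triangle inequality using the $L^\infty$ bounds on $\ell,\ell'$, and then recover $\widetilde{y}$ by adding back $\|\ell\|_{L^2(I,\mu)}$; the $L^2(I,dx)$ version is obtained exactly by the weight comparison you describe. Your remark about $|p-q|\le 2(|p|\vee|q|)$ is well taken---the paper simply passes from $|p-q|$ to $|p|\vee|q|$ without comment.
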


\noindent
\begin{proof}
Let $\ell(x):=\frac{q-p}{x_2-x_1}(x-x_1)+p$, $x \in \mathbb{R}$. Then, we can check that
$\|\ell\|_{L^{\infty}(I)} \leq |p|\vee|q|$ and $\|\ell'\|_{L^{\infty}(I)} \leq |I|^{-1} |p-q|$. Let $y:=\widetilde{y}-\ell$. Then, 
$y \in H^{1,2}_0(I) \cap H^{2,1}(I) \cap C^1(\overline{I})$ is a strong solution to \eqref{strongequ2} where $\widetilde{f}$ is replaced by $\widetilde{f}- b \ell' -c\ell$. Then, we derive from \eqref{contraresoles} in Theorem \ref{contraesthm} that $y = \widetilde{y}-\ell$ and that
\begin{align*}
\| \widetilde{y}-\ell \|_{L^2(I, \mu)}&= \|y\|_{L^2(I, \mu)} \leq \frac{1}{\lambda} \big\|\widetilde{f}- b \ell' -c\ell \big\|_{L^2(I, \mu)}\\
&\leq \frac{1}{\lambda} \|\widetilde{f}\|_{L^2(I, \mu)} + \frac{1}{\lambda}\|b\|_{L^2(I, \mu)} \|\ell'\|_{L^{\infty}(I)}  + \frac{1}{\lambda} \|c\|_{L^2(I, \mu)} \|\ell\|_{L^{\infty}(I)}.
\end{align*}
Therefore, 
\begin{align*}
\|\widetilde{y} \|_{L^2(I, \mu)} &\leq \|\widetilde{y}-\ell \|_{L^2(I, \mu)} + \|\ell\|_{L^2(I, \mu)}  \leq  \|\widetilde{y}-\ell \|_{L^2(I, \mu)} + |I|^{1/2} (\max_{\overline{I}} \rho)^{1/2} \|\ell\|_{L^{\infty}(I)} \\
&\leq   \frac{1}{\lambda} \|\widetilde{f}\|_{L^2(I, \mu)} + \frac{1}{\lambda}\|b\|_{L^2(I, \mu)}  |I|^{-1} |p-q|  + \left( \frac{1}{\lambda} \|c\|_{L^2(I, \mu)}+|I|^{1/2}  (\max_{\overline{I}} \rho)^{1/2} \right) (|p| \vee |q|) \\
&\leq  \frac{1}{\lambda} \|\widetilde{f}\|_{L^2(I, \mu)}+     \left(\frac{1}{\lambda}\|b\|_{L^2(I, \mu)}  |I|^{-1}+ \frac{1}{\lambda} \|c\|_{L^2(I, \mu)}+|I|^{1/2}  (\max_{\overline{I}} \rho)^{1/2} \right) (|p| \vee |q|).
\end{align*}
The rest immediately follows from the above.
\end{proof}

\begin{theo} \label{contrastabilest}
Let $b \in L^2(I)$, $c \in L^2(I)$ with $c \geq \lambda$ on $I$ for some constant $\lambda>0$ and $\widetilde{f} \in L^2(I)$, where $I=(x_1, x_2)$ is a bounded open interval. Let
$\rho$ be a function defined in \eqref{defnrhofun}.
Let $\widetilde{y} \in H^{2,1}(I) \cap C^1(\overline{I})$ be the unique strong solution to
\eqref{bvpstrong} as in Theorem \ref{exunisol}. 
Let $\Phi \in H^{2,2}(I) \cap C^1(\overline{I})$. Then,
\begin{align*}
\| \widetilde{y}-\Phi \|_{L^2(I)} \leq \left(\frac{\max_{\overline{I}}\rho}{\min_{\overline{I}} \rho } \right)^{1/2} \frac{1}{\lambda} \big\| \widetilde{f} - \mathcal{L}[\Phi]   \big \|_{L^2(I)}+ \left(\frac{\max_{\overline{I}}\rho}{\min_{\overline{I}} \rho } \right)^{1/2}\widetilde{K}_3\bigg( \big|p-\Phi(x_1)\big|  \vee \big|q-\Phi(x_2)\big| \bigg),
\end{align*}
where $\widetilde{K}_3>0$ is a constant  as in Theorem \ref{contrastabiles} and $\mathcal{L}$ is a differential operator defined by \eqref{mathcalloper}.
\end{theo}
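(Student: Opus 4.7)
The plan is to reduce the statement to Theorem \ref{contrastabiles} by shifting to the error function $u := \widetilde{y} - \Phi$, exactly as was done in the proof of Theorem \ref{contierres}, and then invoking the sharper $L^2(I)$-contraction estimate from Theorem \ref{contrastabiles} in place of the energy-type bound from Theorem \ref{exunisol}. The structure of the argument is essentially prescribed by the prior theorems; the only real content is to verify that all regularity and integrability hypotheses transfer properly to $u$.

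First, I would check that $u \in H^{2,1}(I) \cap C^1(\overline{I})$. This is immediate since $\widetilde{y} \in H^{2,1}(I) \cap C^1(\overline{I})$ and $\Phi \in H^{2,2}(I) \cap C^1(\overline{I}) \subseteq H^{2,1}(I) \cap C^1(\overline{I})$ on the bounded interval $I$ by H\"older's inequality. Next I would confirm that $\widetilde{f}-\mathcal{L}[\Phi] \in L^2(I)$: indeed $\widetilde{f}\in L^2(I)$ by hypothesis, and $\mathcal{L}[\Phi]=-\varepsilon\Phi''+b\Phi'+c\Phi \in L^2(I)$ because $\Phi''\in L^2(I)$ while $\Phi',\Phi\in C(\overline{I})\subset L^\infty(I)$ and $b,c\in L^2(I)$.

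Second, by linearity of $\mathcal{L}$ the function $u$ satisfies
\begin{equation*}
-\varepsilon u''+bu'+cu = \widetilde{f}-\mathcal{L}[\Phi] \quad \text{a.e.\ on } I,
\end{equation*}
with boundary values $u(x_1)=p-\Phi(x_1)$ and $u(x_2)=q-\Phi(x_2)$. By the uniqueness statement of Theorem \ref{exunisol}, $u$ coincides with the unique strong solution of the BVP \eqref{bvpstrong} where $\widetilde{f}$ is replaced by $\widetilde{f}-\mathcal{L}[\Phi]$ and the boundary data $(p,q)$ is replaced by $(p-\Phi(x_1),\,q-\Phi(x_2))$.

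Third, I would apply the $L^2(I)$ estimate of Theorem \ref{contrastabiles} to $u$ with these substitutions, obtaining directly
\begin{equation*}
\|u\|_{L^2(I)} \leq \left(\frac{\max_{\overline{I}}\rho}{\min_{\overline{I}}\rho}\right)^{\!1/2}\!\frac{1}{\lambda}\,\bigl\|\widetilde{f}-\mathcal{L}[\Phi]\bigr\|_{L^2(I)} + \left(\frac{\max_{\overline{I}}\rho}{\min_{\overline{I}}\rho}\right)^{\!1/2}\!\widetilde{K}_3\bigl(|p-\Phi(x_1)| \vee |q-\Phi(x_2)|\bigr),
\end{equation*}
which is the claimed inequality since $\|u\|_{L^2(I)}=\|\widetilde{y}-\Phi\|_{L^2(I)}$. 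I do not anticipate any real obstacle; the argument is a mechanical instance of the ``subtract the trial function, apply the stability estimate'' principle already used in Theorem \ref{contierres}, and the hypothesis $\Phi \in H^{2,2}(I)$ (rather than merely $H^{2,1}(I)$) is exactly what is needed to place $\widetilde{f}-\mathcal{L}[\Phi]$ in $L^2(I)$ so that Theorem \ref{contrastabiles} is applicable.
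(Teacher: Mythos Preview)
Your proposal is correct and follows essentially the same route as the paper: define $u=\widetilde{y}-\Phi$, verify it is the unique strong solution of the shifted boundary value problem with right-hand side $\widetilde{f}-\mathcal{L}[\Phi]$ and boundary data $p-\Phi(x_1)$, $q-\Phi(x_2)$, then apply Theorem~\ref{contrastabiles}. Your regularity bookkeeping is in fact slightly more careful than the paper's (you correctly place $u$ in $H^{2,1}(I)\cap C^1(\overline{I})$, which is all that is needed, whereas the paper asserts $u\in H^{2,2}(I)$ without justifying the $L^2$-integrability of $\widetilde{y}''$).
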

\noindent
\begin{proof}
Note that $\mathcal{L}[\widetilde{y}]=-\widetilde{y}''+b\widetilde{y}'+c\widetilde{y} = \widetilde{f} \in L^2(I)$ and $\mathcal{L}[\Phi] = -\varepsilon \Phi''+b \Phi'+c\Phi \in L^2(I)$. Let $u = \widetilde{y}-\Phi$. Then, $u \in H^{2,2}(I) \cap C^1(\overline{I})$ and $\mathcal{L}[u]=\widetilde{f}-\mathcal{L}[\Phi] \in L^2(I)$. Moreover, by Theorem \ref{basictheo} $u$ is a uniqe strong solution to the following problem:
\begin{equation*} \label{bvpstoequ}
\left\{
\begin{alignedat}{2}
& -\varepsilon u''+b u' +c u= \widetilde{f}-\mathcal{L}[\Phi] \quad \text{ on }  I, \\
&\; u(x_1)=p-\Phi(x_1), \quad u(x_2)=q-\Phi(x_2).
\end{alignedat} \right.
\end{equation*}
By Theorem \ref{contrastabiles},
$$
\|u\|_{L^2(I)} \leq \left(\frac{\max_{\overline{I}}\rho}{\min_{\overline{I}} \rho } \right)^{1/2} \frac{1}{\lambda} \| \widetilde{f}-\mathcal{L}[\Phi] \|_{L^2(I)} +\left(\frac{\max_{\overline{I}}\rho}{\min_{\overline{I}} \rho } \right)^{1/2}  \widetilde{K}_3\bigg( \big|p-\Phi(x_1)\big|  \vee \big|q-\Phi(x_2)\big| \bigg),
$$
as desired.
\end{proof}

\begin{theo} \label{contracproweig}
Let $\varepsilon>0$ be a constant, $b \in L^2(I)$, $c \in L^2(I)$ with $c \geq \lambda$ on $I$ for some constant $\lambda>0$ and $\widetilde{f} \in L^2(I)$, where $I=(x_1, x_2)$ is a bounded open interval. Let $p,q \in \mathbb{R}$ be given.
Let
$\rho$ be a function defined in \eqref{defnrhofun}.
Let $\widetilde{y} \in H^{2,1}(I) \cap C^1(\overline{I})$ be the unique strong solution to
\eqref{bvpstrong} as in Theorem \ref{exunisol}. Let $\Phi_A \in H^{1,2}_0(I) \cap H^{2,2}(I) \cap C^1(\overline{I})$ and $\ell(x):=\frac{q-p}{x_2-x_1}(x-x_1)+p$, $x \in \mathbb{R}$. Then,
\begin{align*}
\big\| \widetilde{y} - (\Phi_A +\ell)  \big\|_{L^2(I)} &\leq \left(\frac{\max_{\overline{I}}\rho}{\min_{\overline{I}} \rho } \right)^{1/2}\frac{1}{\lambda} \big\|\widetilde{f} -\mathcal{L}[\Phi_A+\ell]\big\|_{L^2(I)} \\
& \leq \exp\left(\int_{I} \frac{1}{2\varepsilon} |b| dx \right) \frac{1}{\lambda}  \big\|\widetilde{f} -\mathcal{L}[\Phi_A+\ell]\big\|_{L^2(I)}, 
\end{align*}
where $\mathcal{L}$ is a differential operator defined by \eqref{mathcalloper}.
\end{theo}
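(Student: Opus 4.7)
The plan is to derive this estimate as a direct corollary of Theorem \ref{contrastabilest}, exploiting the crucial fact that the trial function $\Phi_A + \ell$ has been engineered to match the boundary data exactly. Then I would separately verify the elementary comparison between $(\max_{\overline{I}}\rho/\min_{\overline{I}}\rho)^{1/2}$ and the exponential in $|b|$.

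First I would verify the boundary condition for $\Phi_A + \ell$. Since $\Phi_A \in H^{1,2}_0(I) \cap C^1(\overline{I})$, the characterization of $H^{1,r}_0(I)$ stated just after Proposition \ref{basicbrezis} implies $\Phi_A(x_1) = \Phi_A(x_2) = 0$. A direct computation with the affine function $\ell(x) = \frac{q-p}{x_2-x_1}(x-x_1)+p$ gives $\ell(x_1) = p$ and $\ell(x_2) = q$, hence
\[
(\Phi_A+\ell)(x_1) = p, \qquad (\Phi_A+\ell)(x_2) = q.
\]
In particular $|p-(\Phi_A+\ell)(x_1)| \vee |q-(\Phi_A+\ell)(x_2)| = 0$.

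Next I would apply Theorem \ref{contrastabilest} with $\Phi$ replaced by $\Phi_A+\ell$. The hypotheses are satisfied because $\Phi_A+\ell \in H^{2,2}(I) \cap C^1(\overline{I})$ (as $\ell$ is a polynomial of degree one and $\Phi_A \in H^{2,2}(I) \cap C^1(\overline{I})$ by assumption). The previous step shows the boundary-term contribution vanishes, yielding
\[
\bigl\| \widetilde{y} - (\Phi_A+\ell) \bigr\|_{L^2(I)} \le \left(\frac{\max_{\overline{I}}\rho}{\min_{\overline{I}}\rho}\right)^{1/2}\frac{1}{\lambda}\,\bigl\|\widetilde{f} - \mathcal{L}[\Phi_A+\ell]\bigr\|_{L^2(I)},
\]
which is the first inequality.

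For the second inequality I would unpack the definition \eqref{defnrhofun}: $\rho(x) = \exp\bigl(\int_{x_1}^{x} \tfrac{-b(s)}{\varepsilon}\,ds\bigr)$. Since for any $x,y \in \overline{I}$ we have
\[
\left| \int_{x_1}^{x} \tfrac{-b}{\varepsilon}\,ds - \int_{x_1}^{y} \tfrac{-b}{\varepsilon}\,ds \right| \le \int_{I} \tfrac{|b|}{\varepsilon}\,dx,
\]
taking the supremum over $x$ and the infimum over $y$ (both attained on the compact set $\overline{I}$ by continuity of $\rho$) gives $\max_{\overline{I}}\rho / \min_{\overline{I}}\rho \le \exp\bigl(\int_I \tfrac{|b|}{\varepsilon}\,dx\bigr)$, and taking square roots yields the desired bound. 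Combining with the first inequality completes the proof. No step presents a genuine obstacle; the only non-trivial aspect is recognizing that the design $\Psi = \Phi_A + \ell$ is precisely what annihilates the boundary contribution in Theorem \ref{contrastabilest}.
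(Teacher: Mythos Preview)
Your proposal is correct and follows essentially the same route as the paper: verify $(\Phi_A+\ell)(x_1)=p$, $(\Phi_A+\ell)(x_2)=q$, then apply Theorem~\ref{contrastabilest} with $\Phi=\Phi_A+\ell$ so that the boundary term vanishes. The only addition is your explicit justification of the second inequality $\bigl(\max_{\overline{I}}\rho/\min_{\overline{I}}\rho\bigr)^{1/2}\le\exp\bigl(\int_I \tfrac{1}{2\varepsilon}|b|\,dx\bigr)$, which the paper leaves implicit (it already appears without proof in Theorem~\ref{contraesthm}).
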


\noindent
\begin{proof}
Since $(\Phi_A +\ell)(x_1)=p$ and $(\Phi_A +\ell)(x_2)=q$, the assertion follows from Theorem \ref{contrastabilest} where 
$\Phi$ there is replaced by $\Phi_A+\ell$ here.
\end{proof}

\subsection{Numerical experiments for PINN with $L^2(I, \mu)$-contraction estimates}\label{weightsimul}
\noindent
Let us consider the conditions as in Section \ref{basicexperime} and additionally assume that $c \geq \lambda$ for some $\lambda>0$.
Then, by using Theorem \ref{contracproweig} with the Monte Carlo integration as in \eqref{montecarint}, we discover that for sufficiently large $n \in \mathbb{N}$
\begin{align}
\underbrace{\frac{1}{n} \sum_{i=1}^n |\widetilde{y}(X_i)-(\Phi_A+\ell)(X_i)|^2}_{=: {\sf Error}} &\leq \frac{1}{\lambda^2} \left(\frac{\max_{\overline{I}}\rho}{\min_{\overline{I}} \rho } \right) \cdot  \frac{1}{n} \sum_{i=1}^n \Big(\widetilde{f}(X_i)-\mathcal{L}[\Phi_A+\ell](X_i) \Big)^2, \nonumber \\
& \leq \frac{1}{\lambda^2} \exp\left(\int_{I} \frac{1}{\varepsilon} |b| dx \right)\cdot  \underbrace{\frac{1}{n} \sum_{i=1}^n \Big(\widetilde{f}(X_i)-\mathcal{L}[\Phi_A+\ell](X_i) \Big)^2}_{=:{\sf Loss}}, \quad \text{ very likely}. \label{weighterbdd}
\end{align}

\begin{exam} \label{exampleweight} \rm
Let $I=(0, 1)$. Assume that $b=2$, $p=0$, $q=1$ and $\widetilde{f}=0$. Let $\lambda>0$ be a constant and $c:=\lambda$. Then, \eqref{bvpstrong} is expressed as
\begin{equation} \label{bvpstrongspeci}
\left\{
\begin{alignedat}{2}
& -\varepsilon \widetilde{y}''+2 \widetilde{y}' +\lambda \widetilde{y}=  \widetilde{f} \quad \text{ on }  I, \\
&\widetilde{y}(0)=0, \quad \widetilde{y}(1)=1,
\end{alignedat} \right.
\end{equation}
which has a unique solution by Theorem \ref{exunisol}.
\noindent
All experimental details are identical to those described in Example \ref{pinn12newex}.
\begin{itemize}
\item[(1)]
Let $\varepsilon=1$.
Then, 
$$
\widetilde{y}(x):= \frac{1}{e^{1+\sqrt{1+\lambda}}-e^{1-\sqrt{1+\lambda}}}\, e^{(1+\sqrt{1+\lambda})x} + \frac{-1}{e^{1+\sqrt{1+\lambda}}-e^{1-\sqrt{1+\lambda}}} \, e^{(1-\sqrt{1+\lambda})x}, \quad x \in \mathbb{R}
$$
is a unique solution to \eqref{bvpstrongspeci}
\begin{figure}[!h]
     \centering
     \begin{subfigure}[b]{0.43\textwidth}
         \centering
         \includegraphics[width=\textwidth]{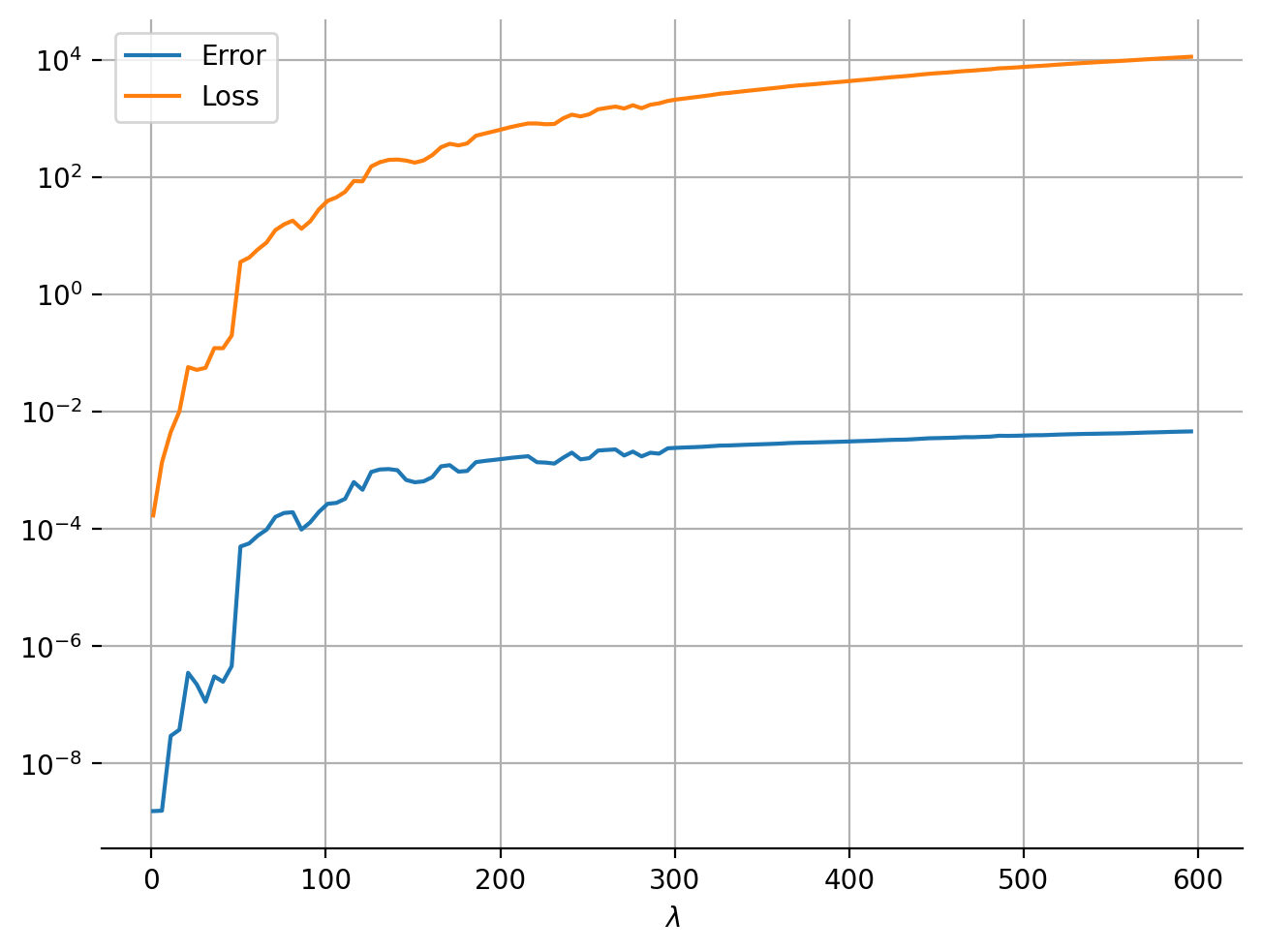}
	\caption{{\sf Loss} and {\sf Error} after 500 epochs as $\lambda$ increases}
         \label{fig:train}
     \end{subfigure}
     \hfill
     \begin{subfigure}[b]{0.43\textwidth}
         \centering
         \includegraphics[width=\textwidth]{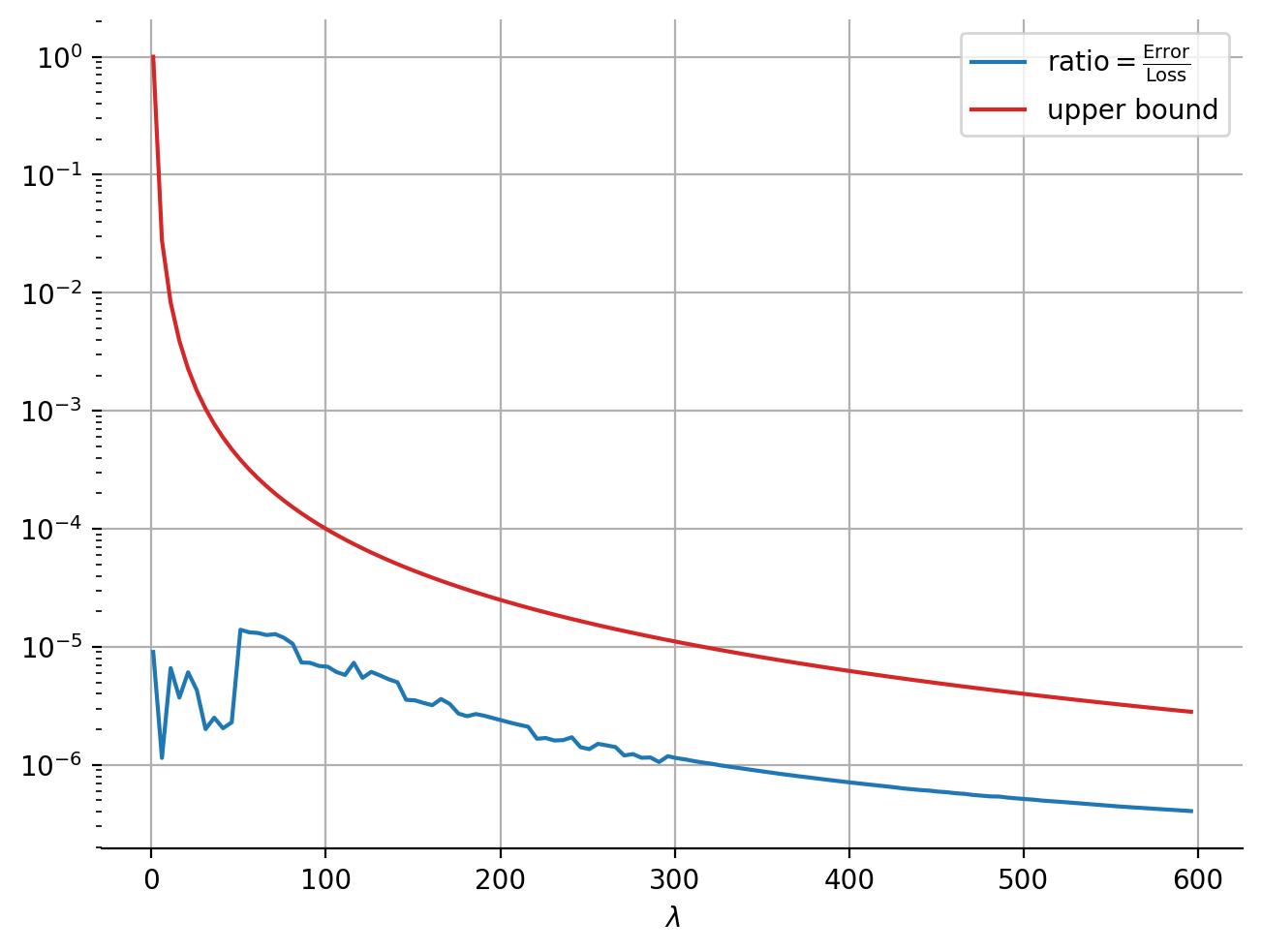}
	\caption{$\frac{\sf Error}{\sf Loss}$ bounded by the upper bound $\frac{1}{\lambda^2}$ as $\lambda$ increases}
         \label{fig:result}
     \end{subfigure}
     \caption{As $\lambda$ increases, {\sf Loss} increases but {\sf Error} remains robust due to the decrease of $\frac{\sf Error}{\sf Loss}$}
     \label{figweighestim}
\end{figure}
\end{itemize}
In Figure \ref{figweighestim}(a), we can observe that as $\lambda$ increases, {\sf Loss} increases whereas {\sf Error} is relatively stable. The main reason for this is explained by Figure \ref{figweighestim}(b) where $\frac{\sf Error}{\sf Loss}$ decreases with the upper bound $\frac{1}{\lambda^2}$ as $\lambda$ increases. In the error estimates \eqref{weighterbdd}, we derive the upper bound of $\frac{\sf Error}{\sf Loss}$ as
$\frac{1}{\lambda^2} \exp\left(\int_{I} \frac{1}{\varepsilon} |b| dx \right) = \frac{e^2}{\lambda^2}$.
However, by  Figure \ref{figweighestim}(b), 
one can see that $\frac{1}{\lambda^2}$ is more suitable upper bound for $\frac{\sf Error}{\sf Loss}$ than $\frac{e^2}{\lambda^2}$.
\begin{itemize}
\item[(2)]
For general $\varepsilon>0$, we obtain that
$$
\widetilde{y}(x):= \frac{1}{e^{\frac{1}{\varepsilon}+\sqrt{\frac{1}{\varepsilon^2}+\frac{\lambda}{\varepsilon}}}-e^{\frac{1}{\varepsilon}-\sqrt{\frac{1}{\varepsilon^2}+\frac{\lambda}{\varepsilon}}}} e^{\left(  \frac{1}{\varepsilon} + \sqrt{\frac{1}{\varepsilon^2} + \frac{\lambda}{\varepsilon}} \right) x}+ \frac{-1}{e^{\frac{1}{\varepsilon}+\sqrt{\frac{1}{\varepsilon^2}+\frac{\lambda}{\varepsilon}}}-e^{\frac{1}{\varepsilon}-\sqrt{\frac{1}{\varepsilon^2}+\frac{\lambda}{\varepsilon}}}}  e^{\left(  \frac{1}{\varepsilon} - \sqrt{\frac{1}{\varepsilon^2} + \frac{\lambda}{\varepsilon}} \right) x}, \quad x \in \mathbb{R}
$$
is a unique solution to \eqref{bvpstrongspeci} by Theorem \ref{exunisol}.
\begin{figure}[!h]
     \centering
     \begin{subfigure}[b]{0.43\textwidth}
         \centering
         \includegraphics[width=\textwidth]{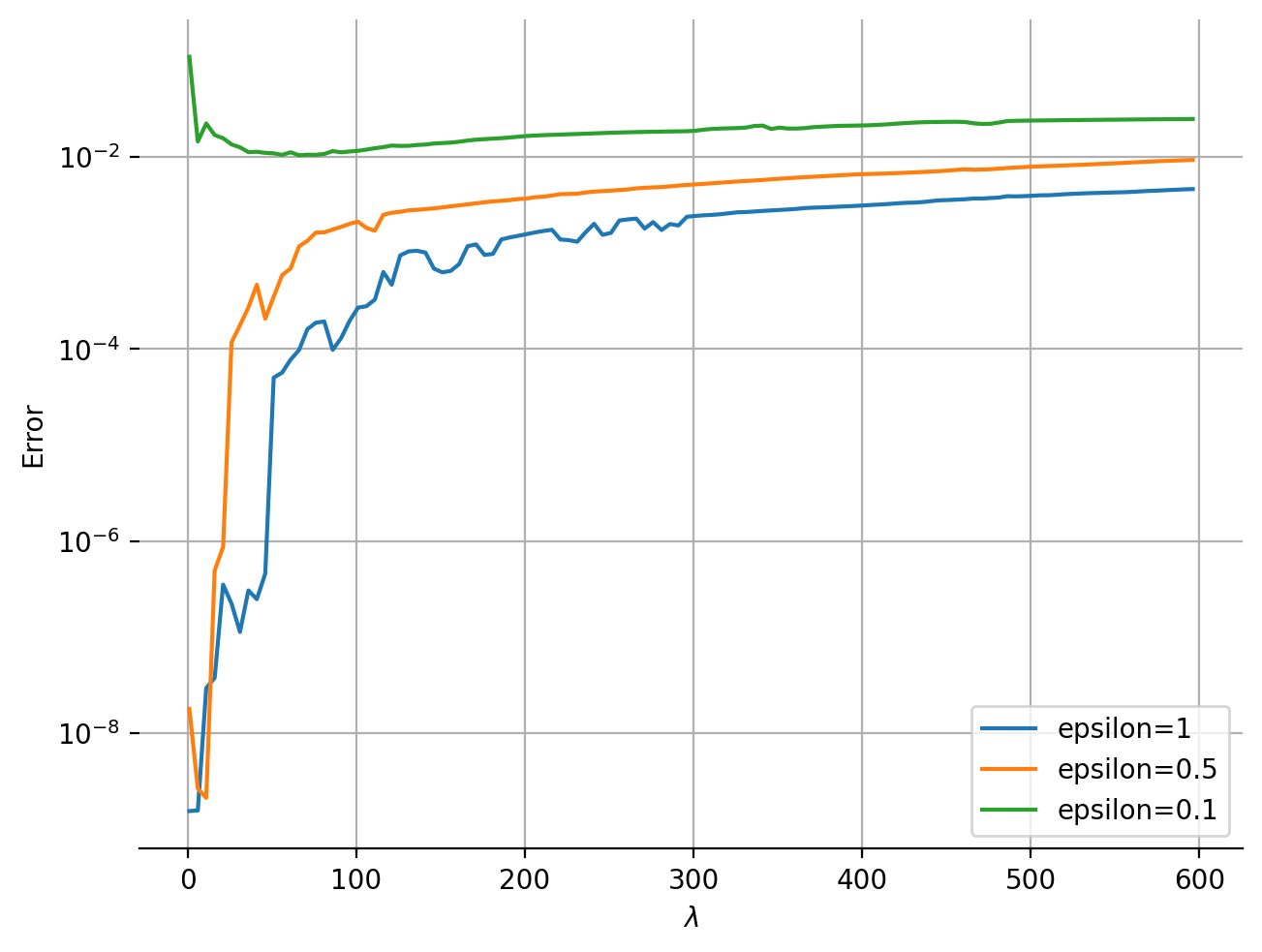}
	\caption{Comparison of {\sf Errors} after 500 epochs as $\lambda$ increase when $\varepsilon=1, 0.5, 0.1$}.
         \label{fig:train}
     \end{subfigure}
     \hfill
     \begin{subfigure}[b]{0.43\textwidth}
         \centering
         \includegraphics[width=\textwidth]{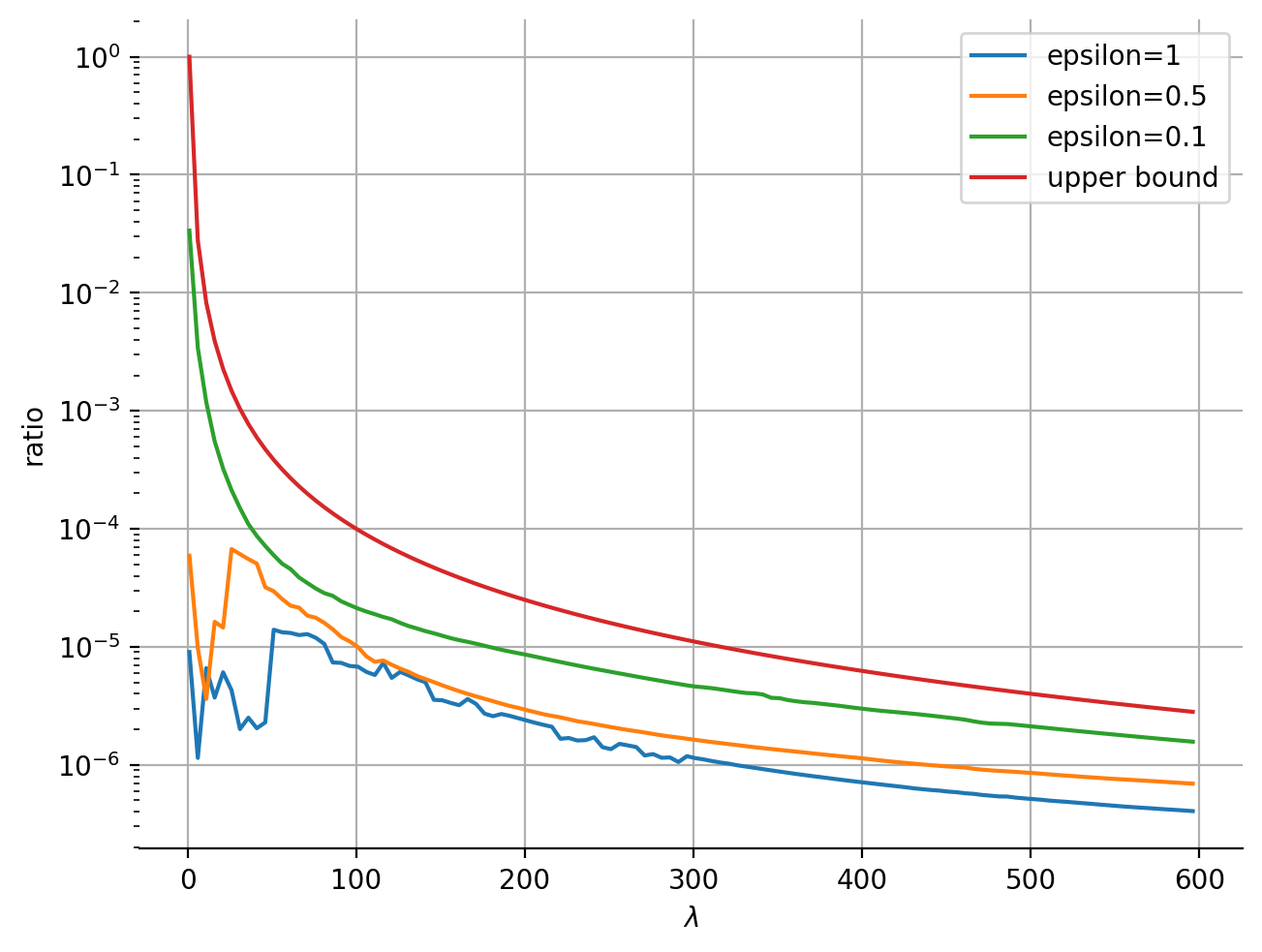}
	\caption{Comparison of the ratios ($\frac{\sf Error}{\sf Loss}$) as $\lambda$ increases when $\varepsilon=1, 0.5, 0.1$ and the upper bound $\frac{1}{\lambda^2}$}.
         \label{fig:result}
     \end{subfigure}
     \caption{The change of $\varepsilon$ does not significantly affect {\sf Error} behavior}
     \label{figchanepsil}
\end{figure}
\end{itemize}
Figure \ref{figchanepsil}(a) describes the behavior of {\sf Error} for each case when $\varepsilon=1, 0.5, 0.1$. In Figure \ref{figchanepsil}(a), if $\lambda$ is small, there are rapid changes of {\sf Error}, but if $\lambda$ is over $100$, {\sf Error} remains relatively stable. Figure \ref{figchanepsil}(b) describes the behaviors of the ratios ($\frac{\sf Error }{\sf Loss}$) which rapidly decrease as $\lambda$ increases. The rapid decrease of $\frac{\sf Error }{\sf Loss}$ as the increase of $\lambda$ in Figure \ref{figchanepsil}(b) explains the reason {\sf Error} remains relatively stable in Figure \ref{figchanepsil}(a). Here, we point out that the upper bound of $\frac{\sf Error}{\sf Loss}$ is $\frac{1}{\lambda^2}$ in Figure \ref{figchanepsil}(b). Indeed, if $\varepsilon=0.1$, then one can derive an upper bound of $\frac{\sf Error}{\sf Loss}$ as $\frac{1}{\lambda^2} \exp\left(\int_{I} \frac{1}{\varepsilon} |b| dx \right)=\frac{e^{20}}{\lambda^2}$ as in \eqref{weighterbdd}, which is much bigger than $\frac{1}{\lambda^2}$. Therefore, we have to derive sharper error estimates than \eqref{weighterbdd}, which will be presented in the next section.
\end{exam}

\section{Error analysis via $L^2(I, dx)$-contraction estimates } \label{l2dxcontrasec}
\subsection{$L^2(I, dx)$-contraction estimates }
\begin{theo}[$L^2(I, dx)$-contraction estimates] \label{contradxestim}
Let $\varepsilon>0$ be a constant, $b \in H^{1,1}(I)$, $c \in L^1(I)$ with $c \geq 0$ on $I$ and $f \in L^2(I)$, where $I$ is a bounded open interval.  Assume that there exists a constant $\gamma >0$ such that $-\frac{1}{2}b'+c \geq \gamma$ on $I$. Let $y \in H^{1,2}_0(I) \cap H^{2,1}(I) \cap C^1(\overline{I})$ be a unique strong solution to \eqref{strongequ}. Then,
\begin{equation} \label{contrarwrtdx}
\| y\|_{L^2(I)} \leq \frac{1}{\gamma} \|f\|_{L^2(I)}.
\end{equation}
\end{theo}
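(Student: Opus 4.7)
The plan is to run the standard energy method on the strong equation, using the assumption $-\tfrac{1}{2}b'+c \geq \gamma$ to absorb the first-order term $by'$ into a coercivity bound that does not see $\varepsilon$ or $\|b\|$ explicitly.

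Concretely, I would multiply the pointwise identity $-\varepsilon y'' + by' + cy = f$ (a.e.\ on $I$) by $y$ and integrate over $I$. Since $y \in H^{1,2}_0(I) \cap C^1(\overline{I})$ with $y(x_1)=y(x_2)=0$, integration by parts on the diffusion term yields
\begin{equation*}
-\int_I \varepsilon y'' y \, dx = \int_I \varepsilon |y'|^2 \, dx,
\end{equation*}
with no boundary contribution. The key manipulation is on the drift term: observing $2 b y' y = b (y^2)'$, I would use the weak product rule. Since $b \in H^{1,1}(I)\subset C(\overline{I})$ and $y^2 \in C^1(\overline{I})$ vanishes at $x_1,x_2$, the function $by^2 \in H^{1,1}(I)$ with $(by^2)(x_1)=(by^2)(x_2)=0$, so Proposition \ref{basicbrezis}(i) gives $\int_I (b'y^2 + 2byy')\,dx = 0$. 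Hence
\begin{equation*}
\int_I b y' y \, dx = -\tfrac{1}{2}\int_I b' y^2 \, dx.
\end{equation*}

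Combining these with the $cy\cdot y$ term produces
\begin{equation*}
\int_I \varepsilon |y'|^2 \, dx + \int_I \Bigl(-\tfrac{1}{2}b' + c\Bigr) y^2 \, dx = \int_I f y \, dx.
\end{equation*}
The assumption $-\tfrac{1}{2}b'+c \geq \gamma$ and the nonnegativity of $\varepsilon|y'|^2$ then give
\begin{equation*}
\gamma \int_I y^2 \, dx \leq \int_I f y \, dx \leq \|f\|_{L^2(I)} \|y\|_{L^2(I)}
\end{equation*}
by Cauchy--Schwarz, and dividing by $\|y\|_{L^2(I)}$ (or dismissing the trivial case $y \equiv 0$) produces the desired bound \eqref{contrarwrtdx}.

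The only subtle point is the justification of $\int_I by'y\,dx = -\tfrac{1}{2}\int_I b'y^2\,dx$: it requires checking that the weak derivative of $by^2$ is $b'y^2+2byy'$ and that this product lies in $H^{1,1}_0(I)$, which uses $b\in H^{1,1}(I)\cap C(\overline I)$, $y\in C^1(\overline I)$ with $y(x_j)=0$, and the product rule for Sobolev functions. Everything else is the standard absorption of the convection term into the zeroth-order coefficient; note that neither $\varepsilon$ nor $\|b\|_{L^\infty}$ enters the final constant, which is precisely why the bound \eqref{contrarwrtdx} is robust and leads to the sharp ratio $1/\gamma^2$ announced in the introduction.
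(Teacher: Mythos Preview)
Your proposal is correct and follows essentially the same route as the paper's proof: multiply the strong equation by $y$, integrate, rewrite $\int_I by'y\,dx$ as $-\tfrac12\int_I b'y^2\,dx$ via integration by parts, and apply Cauchy--Schwarz. If anything, you are more careful than the paper in justifying the integration-by-parts step for the drift term using the Sobolev product rule and the boundary values of $by^2$.
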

\begin{proof}
Multiplying both sides of \eqref{strongequ} by $y$ and integrating over $I$ with respect to $dx$, we have
\begin{align*}
\gamma \int_{I} y^2 dx &\leq \int_{I} \varepsilon |y'|^2 \rho dx + \int_{I} \Big(-\frac12 b'+c\Big)y^2 dx =\int_{I} \varepsilon |y'|^2 dx + \int_{y} b y' y dx + \int_{I} cy^2 dx\\\
&= \int_{I} f y dx \leq \left(\int_{I} f^2 dx \right)^{1/2} \left( \int_{I} y^2  dx \right)^{1/2},
\end{align*}
and hence the assertion follows.
\end{proof}

\begin{rem}
The above contraction estimates can be extended to the case of the multi-dimensional linear elliptic equations. 
As a general form of $L^2$-contraction estimates, one can derive $L^r$-contraction estimates with $r \in [1, \infty]$ by using the sub-Markovian property and the Riesz-Thorin interpolation with a duality argument (see \cite{L24}).
\end{rem}

\centerline{}

\begin{theo} \label{contrastabilesdx}
 Let $\varepsilon>0$ be a constant, $b \in H^{1,1}(I)$, $c \in L^2(I)$ with $c \geq 0$ on $I$  and $\widetilde{f} \in L^2(I)$, where $I$ is a bounded open interval.  Assume that there exists a constant $\gamma >0$ such that $-\frac{1}{2}b'+c \geq \gamma$ on $I$.
Then, the  unique strong solution $\widetilde{y} \in H^{2,1}(I) \cap C^1(\overline{I})$ to \eqref{bvpstrong} as in 
Theorem \ref{exunisol} satisfies
$$
\|\widetilde{y}\|_{L^2(I)} \leq \frac{1}{\gamma} \| \widetilde{f} \|_{L^2(I)} +\widetilde{K}_4 (|p| \vee |q|),
$$
where $\widetilde{K}_4:=\frac{1}{\gamma}\|b\|_{L^2(I)}  |I|^{-1}+ \frac{1}{\gamma} \|c\|_{L^2(I)}+|I|^{1/2} $.
\end{theo}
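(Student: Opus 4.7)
The strategy is to reduce the inhomogeneous boundary value problem \eqref{bvpstrong} to a homogeneous one via an affine shift and then invoke the $L^2(I,dx)$-contraction estimate of Theorem~\ref{contradxestim}. This is precisely the unweighted analogue of the argument by which Theorem~\ref{contrastabiles} is derived from Theorem~\ref{contraesthm}, so I would follow that template with the weight $\rho \equiv 1$ throughout.

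First I would introduce the affine interpolant $\ell(x) := \frac{q-p}{x_2-x_1}(x-x_1) + p$, $x \in \mathbb{R}$, for which the elementary pointwise bounds $\|\ell\|_{L^\infty(I)} \leq |p| \vee |q|$ and $\|\ell'\|_{L^\infty(I)} \leq |I|^{-1}|p-q|$ hold. Setting $y := \widetilde{y} - \ell$, the boundary values cancel so $y \in H^{1,2}_0(I)\cap H^{2,1}(I)\cap C^1(\overline{I})$ and, since $\ell'' = 0$, $y$ is the unique strong solution to $-\varepsilon y'' + by' + cy = \widetilde{f} - b\ell' - c\ell$ on $I$ with homogeneous boundary data. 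The right-hand side lies in $L^2(I)$: $\widetilde{f}$ and $c$ do by hypothesis, $\ell$ is bounded, and $b \in H^{1,1}(I) \subset C(\overline{I}) \subset L^\infty(I)$ on the bounded interval $I$ by Proposition~\ref{basicbrezis}(i) and compactness of $\overline{I}$.

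Then I would apply Theorem~\ref{contradxestim} to $y$, using the standing condition $-\tfrac{1}{2}b' + c \geq \gamma$, to obtain $\|\widetilde{y}-\ell\|_{L^2(I)} \leq \frac{1}{\gamma}\|\widetilde{f} - b\ell' - c\ell\|_{L^2(I)}$. A triangle inequality on the right splits the estimate into contributions from $\widetilde{f}$, $b\ell'$, and $c\ell$; the latter two are controlled by H\"{o}lder's inequality together with the $L^\infty$-bounds on $\ell$ and $\ell'$. Finally, the triangle inequality $\|\widetilde{y}\|_{L^2(I)} \leq \|\widetilde{y}-\ell\|_{L^2(I)} + \|\ell\|_{L^2(I)}$ combined with $\|\ell\|_{L^2(I)} \leq |I|^{1/2}\|\ell\|_{L^\infty(I)} \leq |I|^{1/2}(|p|\vee|q|)$ collects the three terms multiplying $(|p|\vee|q|)$ into the single constant $\widetilde{K}_4$ and delivers the claimed bound.

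There is no substantive analytical obstacle here; everything is routine bookkeeping of boundary corrections, and the required finiteness of each term in $\widetilde{K}_4$ follows from the one-dimensional Sobolev embedding. The only point worth stressing is that, in contrast to Theorem~\ref{contrastabiles}, the contraction factor $\tfrac{1}{\gamma}$ now multiplies the genuine $L^2(I,dx)$-norm of $\widetilde{f}$ with no $\rho$-dependent prefactor, which is exactly the improvement that will yield the $\varepsilon$-independent error bound \eqref{mainresulestim} in the next subsection.
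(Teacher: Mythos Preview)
Your proposal is correct and follows essentially the same argument as the paper: reduce to the homogeneous problem via the affine shift $\ell$, apply the $L^2(I,dx)$-contraction estimate of Theorem~\ref{contradxestim} to $y=\widetilde{y}-\ell$, and then collect boundary terms with the triangle inequality and the elementary bounds on $\ell$ and $\ell'$. If anything, your write-up is slightly more careful than the paper in explicitly verifying that $b\ell'\in L^2(I)$ via the one-dimensional embedding $H^{1,1}(I)\hookrightarrow C(\overline{I})$.
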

\begin{proof}
Let $\ell(x):=\frac{q-p}{x_2-x_1}(x-x_1)+p$, $x \in \mathbb{R}$. Then, we can verify that
$\|\ell\|_{L^{\infty}(I)} \leq |p|\vee|q|$ and $\|\ell'\|_{L^{\infty}(I)} \leq |I|^{-1} |p-q|$. Let $y:=\widetilde{y}-\ell$. Then, by Theorem \ref{exunisol}
$y \in H^{1,2}_0(I) \cap H^{2,1}(I) \cap C^1(\overline{I})$ is a unique strong solution to \eqref{strongequ2} where $f$ is replaced by $\widetilde{f}- b \ell' -c\ell$, and hence it follows from \eqref{contrarwrtdx} in Theorem \ref{contradxestim} that
\begin{align*}
\| \widetilde{y}-\ell \|_{L^2(I)}&= \|y\|_{L^2(I)} \leq \frac{1}{\gamma} \big\|\widetilde{f}- b \ell' -c\ell \big\|_{L^2(I)}\\
&\leq \frac{1}{\gamma} \|\widetilde{f}\|_{L^2(I)} + \frac{1}{\gamma}\|b\|_{L^2(I)} \|\ell'\|_{L^{\infty}(I)}  + \frac{1}{\gamma} \|c\|_{L^2(I)} \|\ell\|_{L^{\infty}(I)}.
\end{align*}
Therefore, 
\begin{align*}
\|\widetilde{y} \|_{L^2(I)} &\leq \|\widetilde{y}-\ell \|_{L^2(I)} + \|\ell\|_{L^2(I)}  \leq  \|\widetilde{y}-\ell \|_{L^2(I, \mu)} + |I|^{1/2}  \|\ell\|_{L^{\infty}(I)} \\
&\leq   \frac{1}{\gamma} \|\widetilde{f}\|_{L^2(I)} + \frac{1}{\gamma}\|b\|_{L^2(I)}  |I|^{-1} |p-q|  + \left( \frac{1}{\gamma} \|c\|_{L^2(I)}+|I|^{1/2}  \right) (|p| \vee |q|) \\
&\leq  \frac{1}{\gamma} \|\widetilde{f}\|_{L^2(I)}+     \left(\frac{1}{\gamma}\|b\|_{L^2(I)}  |I|^{-1}+ \frac{1}{\gamma} \|c\|_{L^2(I)}+|I|^{1/2}  \right) (|p| \vee |q|),
\end{align*}
as desired.
\end{proof}

\begin{theo}  \label{contierresdx}
Let $\varepsilon>0$ be a constant, $b \in H^{1,1}(I)$, $c \in L^2(I)$ with $c \geq 0$ on $I$ and $\widetilde{f} \in L^2(I)$, where $I=(x_1, x_2)$ is a bounded open interval. Assume that there exists a constant $\gamma >0$ such that $-\frac{1}{2}b'+c \geq \gamma$ on $I$. Let $p,q \in \mathbb{R}$ be given and $\widetilde{y} \in H^{2,1}(I) \cap C^1(\overline{I})$ be the unique strong solution to
\eqref{bvpstrong} as in Theorem \ref{exunisol}. 
Let $\Phi \in H^{2,2}(I) \cap C^1(\overline{I})$. Then,
\begin{align*}
\| \widetilde{y}-\Phi \|_{L^2(I)} \leq \frac{1}{\gamma} \big\| \widetilde{f} - \mathcal{L}[\Phi]   \big \|_{L^2(I)}+ \widetilde{K}_4\bigg( \big|p-\Phi(x_1)\big|  \vee \big|q-\Phi(x_2)\big| \bigg),
\end{align*}
where $\widetilde{K}_4>0$ is a constant in Theorem \ref{contrastabilesdx} and $\mathcal{L}$ is a differential operator defined as in \eqref{mathcalloper}.
\end{theo}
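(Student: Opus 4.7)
The plan is to reduce Theorem \ref{contierresdx} to the already-established bound in Theorem \ref{contrastabilesdx} by considering the difference $u := \widetilde{y} - \Phi$ and showing that $u$ itself solves a Dirichlet problem of the form \eqref{bvpstrong} with a modified source term and modified boundary data. This is the same reduction used in the proof of Theorem \ref{contrastabilest}, but now applied in the $L^2(I, dx)$-framework under the structural assumption $-\frac{1}{2}b' + c \geq \gamma$.

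First, I would verify the regularity of $u$. Since $\widetilde{y} \in H^{2,1}(I) \cap C^1(\overline{I})$ and $\Phi \in H^{2,2}(I) \cap C^1(\overline{I})$, we have $u \in H^{2,2}(I) \cap C^1(\overline{I})$ (in particular in $H^{2,1}(I) \cap C^1(\overline{I})$). Since $\widetilde{y}$ satisfies $\mathcal{L}[\widetilde{y}] = \widetilde{f}$ a.e. on $I$ and $\mathcal{L}$ is linear, we obtain
\[
\mathcal{L}[u] = \mathcal{L}[\widetilde{y}] - \mathcal{L}[\Phi] = \widetilde{f} - \mathcal{L}[\Phi] \quad \text{a.e. on } I,
\]
and the right-hand side is in $L^2(I)$ because $b \in H^{1,1}(I) \hookrightarrow L^\infty(I)$ (by Lemma \ref{embedlem}), $c \in L^2(I)$, $\Phi'' \in L^2(I)$, and $\Phi, \Phi' \in L^\infty(I)$, so every term of $\mathcal{L}[\Phi] = -\varepsilon \Phi'' + b\Phi' + c\Phi$ lies in $L^2(I)$, and $\widetilde{f} \in L^2(I)$ by assumption. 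The boundary values are $u(x_1) = p - \Phi(x_1)$ and $u(x_2) = q - \Phi(x_2)$.

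Next, I invoke Theorem \ref{basictheo} (combined with Theorem \ref{exunisol}) to conclude that $u$ is the \emph{unique} strong solution of the boundary value problem
\[
\left\{
\begin{alignedat}{2}
& -\varepsilon u'' + b u' + c u = \widetilde{f} - \mathcal{L}[\Phi] \quad \text{on } I, \\
& u(x_1) = p - \Phi(x_1), \quad u(x_2) = q - \Phi(x_2).
\end{alignedat}\right.
\]
Here I use the fact that $b, c \in L^2(I)$ (the former via the Sobolev embedding) so that the hypotheses of Theorem \ref{exunisol} and Theorem \ref{contrastabilesdx} are satisfied. The sign condition $-\frac{1}{2} b' + c \geq \gamma$ carries over unchanged since only the operator $\mathcal{L}$ and not the data enters this hypothesis.

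Finally, applying Theorem \ref{contrastabilesdx} to $u$ (with $\widetilde{f}$ there replaced by $\widetilde{f} - \mathcal{L}[\Phi]$ and $p, q$ replaced by $p - \Phi(x_1)$, $q - \Phi(x_2)$) yields directly
\[
\|u\|_{L^2(I)} \leq \frac{1}{\gamma}\|\widetilde{f} - \mathcal{L}[\Phi]\|_{L^2(I)} + \widetilde{K}_4 \bigl(|p-\Phi(x_1)| \vee |q-\Phi(x_2)|\bigr),
\]
which is exactly the stated estimate. There is no real obstacle; the only minor subtlety is the verification that $\widetilde{f} - \mathcal{L}[\Phi] \in L^2(I)$, which relies on the Sobolev embedding $H^{1,1}(I) \hookrightarrow L^\infty(I)$ provided by Lemma \ref{embedlem} to control the $b\Phi'$ term. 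All other steps are a direct invocation of previously proven results.
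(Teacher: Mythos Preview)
Your proposal is correct and follows essentially the same route as the paper: set $u=\widetilde{y}-\Phi$, observe that $u$ is the unique strong solution of the corresponding Dirichlet problem with right-hand side $\widetilde{f}-\mathcal{L}[\Phi]$ and boundary data $p-\Phi(x_1)$, $q-\Phi(x_2)$, then apply Theorem~\ref{contrastabilesdx}. The only minor quibble is that the embedding $H^{1,1}(I)\hookrightarrow L^{\infty}(I)$ you use for $b$ is not quite Lemma~\ref{embedlem} (which is for $H^{1,1}_0$) but rather Proposition~\ref{basicbrezis}(i); this does not affect the argument.
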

\noindent
\begin{proof}
Let $u := \widetilde{y}-\Phi$. 
Then, $u \in H^{2,2}(I) \cap C^1(\overline{I})$ and $\mathcal{L}[\Phi] \in L^2(I)$. Moreover, by Theorem \ref{exunisol} $u$ is a (unique) strong solution to the following problem:
\begin{equation*} \label{bvpstrong3dx}
\left\{
\begin{alignedat}{2}
& -\varepsilon u''+b u' +c u= \widetilde{f}-\mathcal{L}[\Phi] \quad \text{ on }  I, \\
&\; u(x_1)=p-\Phi(x_1), \quad u(x_2)=q-\Phi(x_2).
\end{alignedat} \right.
\end{equation*}
By Theorem \ref{exunisol}, the result follows.
\end{proof}

\begin{theo} \label{maincontradxes}
Let $\varepsilon>0$ be a constant, $b \in H^{1,1}(I)$, $c \in L^2(I)$ with $c \geq 0$ on $I$ and $\widetilde{f} \in L^2(I)$, where $I=(x_1, x_2)$ is a bounded open interval. Assume that there exists a constant $\gamma >0$ such that $-\frac{1}{2}b'+c \geq \gamma$ on $I$. Let $p,q \in \mathbb{R}$ be given and $\widetilde{y} \in H^{2,1}(I) \cap C^1(\overline{I})$ be the unique strong solution to
\eqref{bvpstrong} as in Theorem \ref{exunisol}.  Let $\Phi_A \in H^{1,2}_0(I) \cap H^{2,2}(I) \cap C^1(\overline{I})$ and $\ell(x):=\frac{q-p}{x_2-x_1}(x-x_1)+p$, $x \in \mathbb{R}$. Then,
\begin{align*}
\big\| \widetilde{y} - (\Phi_A +\ell)  \big\|_{L^2(I)} \leq \frac{1}{\gamma} \big\|\widetilde{f} -\mathcal{L}[\Phi_A+\ell]\big\|_{L^2(I)},
\end{align*}
where $\mathcal{L}$ is a differential operator defined by \eqref{mathcalloper}.
\end{theo}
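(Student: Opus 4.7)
The plan is to derive this theorem as an immediate specialization of Theorem \ref{contierresdx}, mirroring the way Theorem \ref{pinn2eneres} is extracted from Theorem \ref{contierres}. The whole point is that the specific trial function $\Phi_A+\ell$ is cooked up precisely to zero out the boundary mismatch term $\widetilde{K}_4\bigl(|p-\Phi(x_1)|\vee|q-\Phi(x_2)|\bigr)$ appearing in Theorem \ref{contierresdx}, leaving only the contraction factor $\tfrac{1}{\gamma}$ multiplied by the $L^2$-norm of the residual.

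Concretely, I would first verify that $\Phi:=\Phi_A+\ell$ satisfies the regularity hypothesis of Theorem \ref{contierresdx}, namely $\Phi\in H^{2,2}(I)\cap C^1(\overline{I})$. Since $\ell$ is affine it belongs to $C^\infty(\mathbb{R})$ and hence to $H^{2,2}(I)\cap C^1(\overline{I})$, so adding it to $\Phi_A\in H^{1,2}_0(I)\cap H^{2,2}(I)\cap C^1(\overline{I})$ keeps us inside $H^{2,2}(I)\cap C^1(\overline{I})$. Next, I would read off the boundary values: from $\Phi_A\in H^{1,2}_0(I)\cap C(\overline{I})$ together with the characterization of $H^{1,r}_0(I)$ stated just after Proposition \ref{basicbrezis}, we have $\Phi_A(x_1)=\Phi_A(x_2)=0$, and a direct computation gives $\ell(x_1)=p$ and $\ell(x_2)=q$. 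Therefore $(\Phi_A+\ell)(x_1)=p$ and $(\Phi_A+\ell)(x_2)=q$.

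Plugging $\Phi=\Phi_A+\ell$ into the conclusion of Theorem \ref{contierresdx}, the boundary contribution
$$\widetilde{K}_4\Bigl(\bigl|p-(\Phi_A+\ell)(x_1)\bigr|\vee\bigl|q-(\Phi_A+\ell)(x_2)\bigr|\Bigr)$$
vanishes identically, and the estimate collapses to
$$\bigl\|\widetilde{y}-(\Phi_A+\ell)\bigr\|_{L^2(I)}\leq \frac{1}{\gamma}\bigl\|\widetilde{f}-\mathcal{L}[\Phi_A+\ell]\bigr\|_{L^2(I)},$$
which is the desired inequality. Since the theorem is a direct corollary of Theorem \ref{contierresdx} (which in turn rests on Theorem \ref{contrastabilesdx} and ultimately on the $L^2(I,dx)$-contraction Theorem \ref{contradxestim}), there is no genuine obstacle in the argument; the only thing to be careful about is the clean verification of the boundary values and the correct membership of $\Phi_A+\ell$ in the regularity class required to invoke Theorem \ref{contierresdx}.
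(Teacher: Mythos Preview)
Your proposal is correct and follows exactly the same approach as the paper: apply Theorem \ref{contierresdx} with $\Phi=\Phi_A+\ell$, observe that $(\Phi_A+\ell)(x_1)=p$ and $(\Phi_A+\ell)(x_2)=q$ so the boundary term vanishes, and the estimate reduces to the stated inequality. Your version is in fact slightly more careful in explicitly checking the regularity $\Phi_A+\ell\in H^{2,2}(I)\cap C^1(\overline{I})$ and invoking the characterization of $H^{1,2}_0(I)$ for the boundary values, but the argument is the same.
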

\begin{proof}
Since $(\Phi_A +\ell)(x_1)=p$ and $(\Phi_A +\ell)(x_2)=q$, the assertion follows from Theorem \ref{contierresdx} where 
$\Phi$ there is replaced by $\Phi_A+\ell$ here.
\end{proof}
\subsection{Numerical experiments for PINN with $L^2(I, dx)$-contraction estimates } \label{mainerroranaly}
\noindent
Let us consider the conditions in Section \ref{basicexperime} and also assume that $b \in H^{1,1}(I)$, $c \in L^2(I)$ with $c \geq 0$ on $I$  and $f \in L^2(I)$, where $I$ is a bounded open interval. Additionally, assume that 
\begin{equation} \label{maincondiforour}
-\frac{1}{2}b'+c \geq \gamma, \quad \text{for some $\gamma>0$}.
\end{equation}
Then, by using Theorem \ref{maincontradxes} with the Monte Carlo integration as in \eqref{montecarint}, we obtain that for sufficiently large $n \in \mathbb{N}$
\begin{align} \label{errorestindis}
\underbrace{\frac{1}{n} \sum_{i=1}^n |\widetilde{y}(X_i)-(\Phi_A+\ell)(X_i)|^2}_{=:{\sf Error}} \leq \frac{1}{\gamma^2}  \cdot  \underbrace{\frac{1}{n} \sum_{i=1}^n \Big(\widetilde{f}(X_i)-\mathcal{L}[\Phi_A+\ell](X_i) \Big)^2}_{=:{\sf Loss}}, \quad \text{ very likely}.
\end{align}
\centerline{}

\begin{exam} \label{examplefirst} \rm
Let $I=(0,1)$. Let $\lambda, k \geq 0$ be constants. Let $b(x)=-kx$, $c(x)=\lambda$ and \\
$\widetilde{y}(x) =x(1-x)e^x+x^2$, $x \in \mathbb{R}$. Then,
$$
\widetilde{y}'(x) = (1-x-x^2)e^x+2x, \;\;\;\;  \widetilde{y}''(x)=(-3x-x^2)e^x+2, \quad \text{ for all $x \in \mathbb{R}$}.
$$
Let $f(x):=-\varepsilon \widetilde{y}''(x)+b(x)\widetilde{y}'(x)+c(x)\widetilde{y}(x)=\Big( kx^3 +(k-\lambda +\varepsilon)x^2 +(3\varepsilon-k+\lambda)x  \Big) e^x + \lambda x^2-2kx^2 -2\varepsilon$, $x \in \mathbb{R}$.
Then, $\widetilde{y}$ is a unique strong solution to \eqref{bvpstrong} with $p=0$ and $q=1$. Moreover, we can choose
$$
\gamma=\frac12 k + \lambda=-\frac12 b' + c.
$$
Therefore, the upper bound of $\frac{\sf Error}{\sf Loss}$ is expressed as
$$
\frac{1}{\gamma^2} =\frac{4}{(k+2\lambda)^2}.
$$
 All experimental details are identical to those described in Example \ref{pinn12newex}.
\begin{itemize}
\item[(1)]
Let $\varepsilon=1$ and $k=7$.
\begin{figure}[!h]
     \centering
     \begin{subfigure}[b]{0.43\textwidth}
         \centering
         \includegraphics[width=\textwidth]{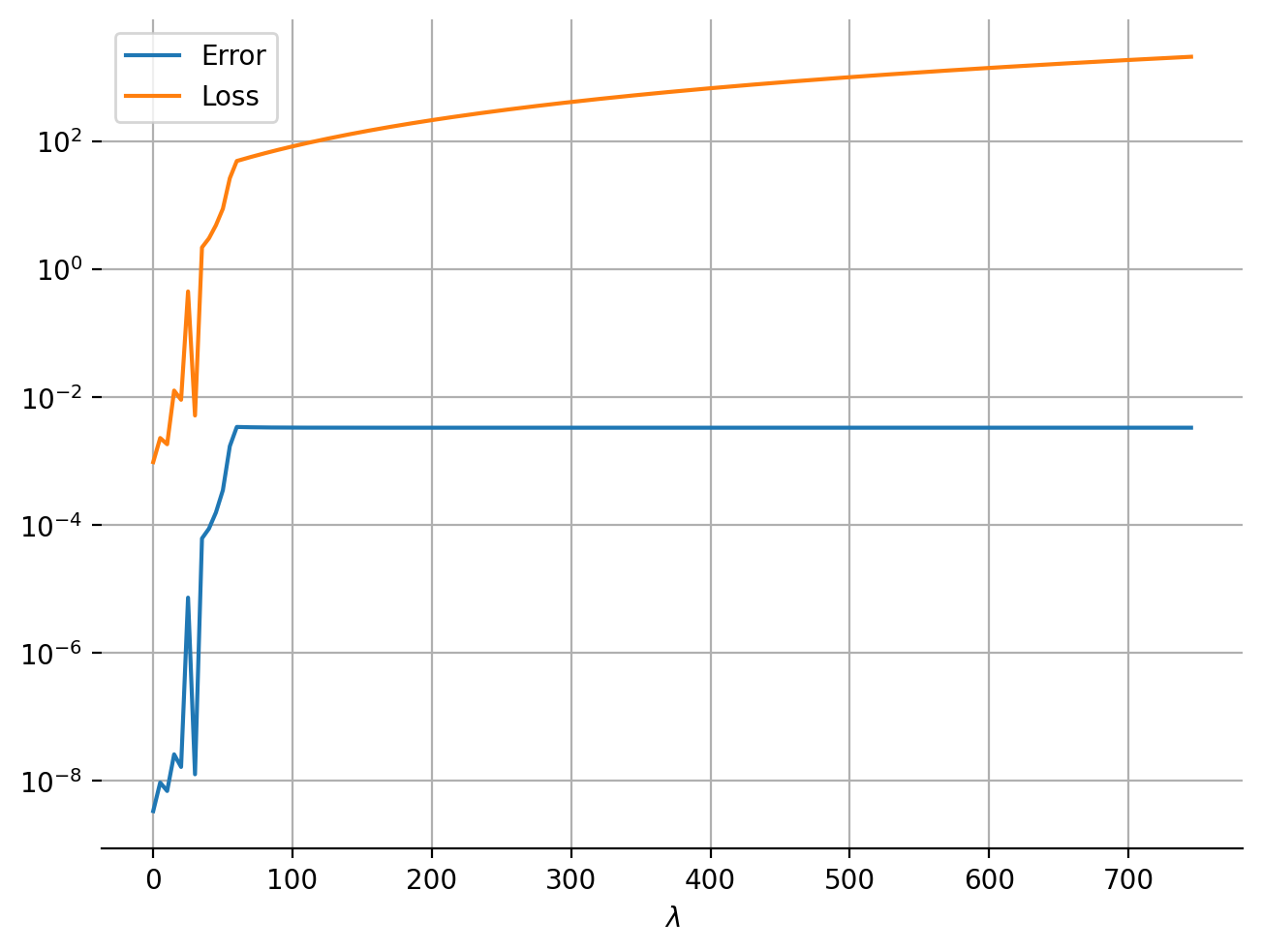}
	\caption{{\sf Loss} and {\sf Error} after 500 epochs as $\lambda$ increases \;($\varepsilon=1$, $k=7$)}
         \label{fig:train}
     \end{subfigure}
     \hfill
     \begin{subfigure}[b]{0.43\textwidth}
         \centering
\includegraphics[width=\textwidth]{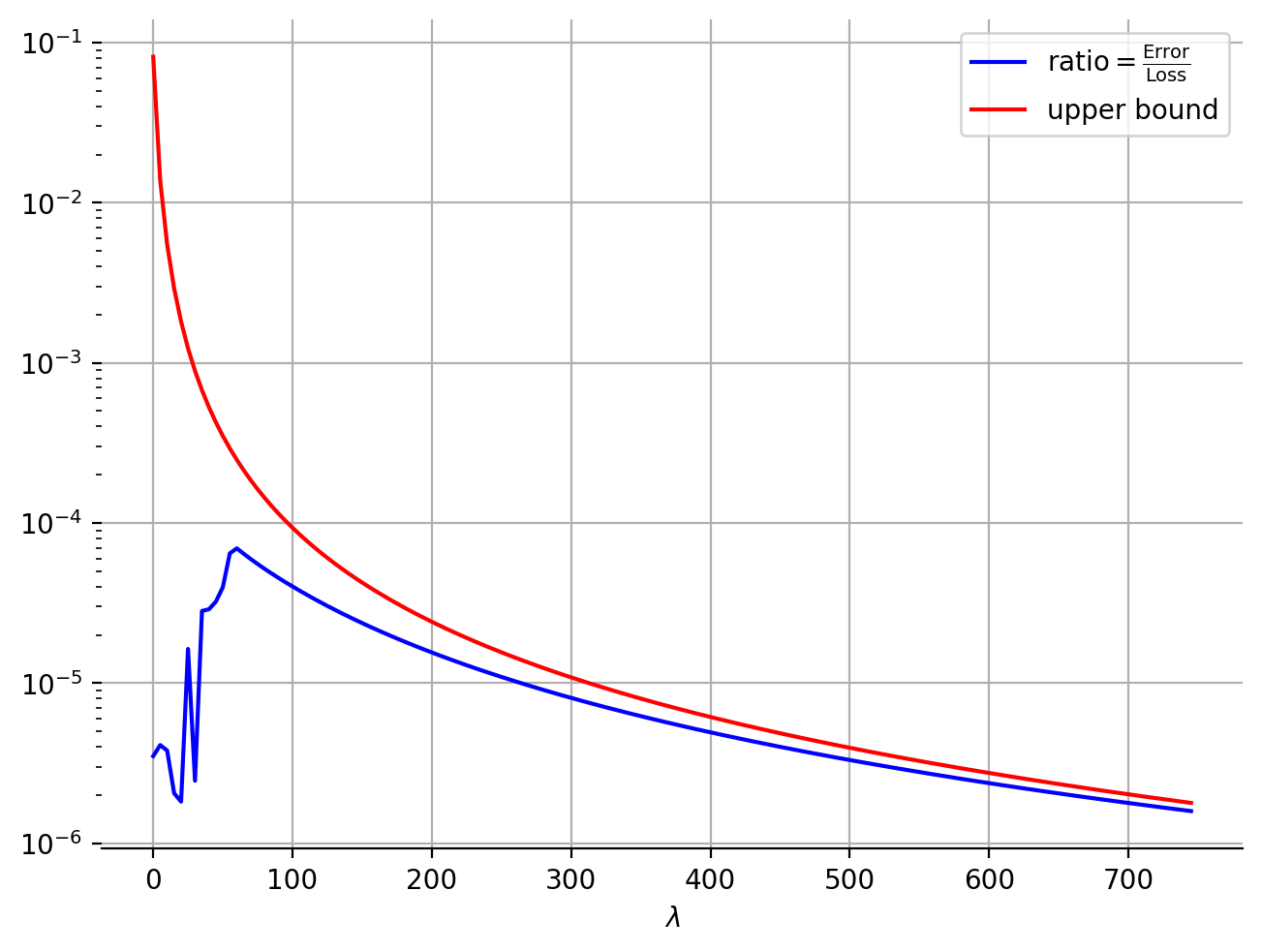}
	\caption{Ratio ($\frac{\sf Error}{\sf Loss}$) bounded by the upper bound $\frac{1}{\gamma^2}=\frac{4}{(7+2\lambda)^2}$\; ($\varepsilon=1$, $k=7$) }
         \label{fig:result}
     \end{subfigure}
     \caption{As $\lambda$ increases, {\sf Loss} increases but {\sf Error} remains robust (Example \ref{examplefirst})}
	\label{figdxcontraes}
\end{figure}
\end{itemize}
In Figure \ref{figdxcontraes}(a), we can observe that as $\lambda$ increases, {\sf Loss} also increases but {\sf Error} remains stable. This phenomenon is well explained by Figure \ref{figdxcontraes}(b) where the ratio ($\frac{\sf Error}{\sf Loss}$) rapidly decreases with the upper bound $\frac{1}{\gamma^2}=\frac{4}{(7+2\lambda)^2}$ as $\lambda$ increases. Furthermore, it is observed in Figure \ref{figdxcontraes}(b) that the upper bound of $\frac{\sf Error}{\sf Loss}$ is very sharp when $\lambda$ is large enough.

\newpage
\begin{itemize}
\item[(2)]
Let $\varepsilon=1$ and $\lambda=7$.
\begin{figure}[!h]
     \centering
     \begin{subfigure}[b]{0.43\textwidth}
         \centering
         \includegraphics[width=\textwidth]{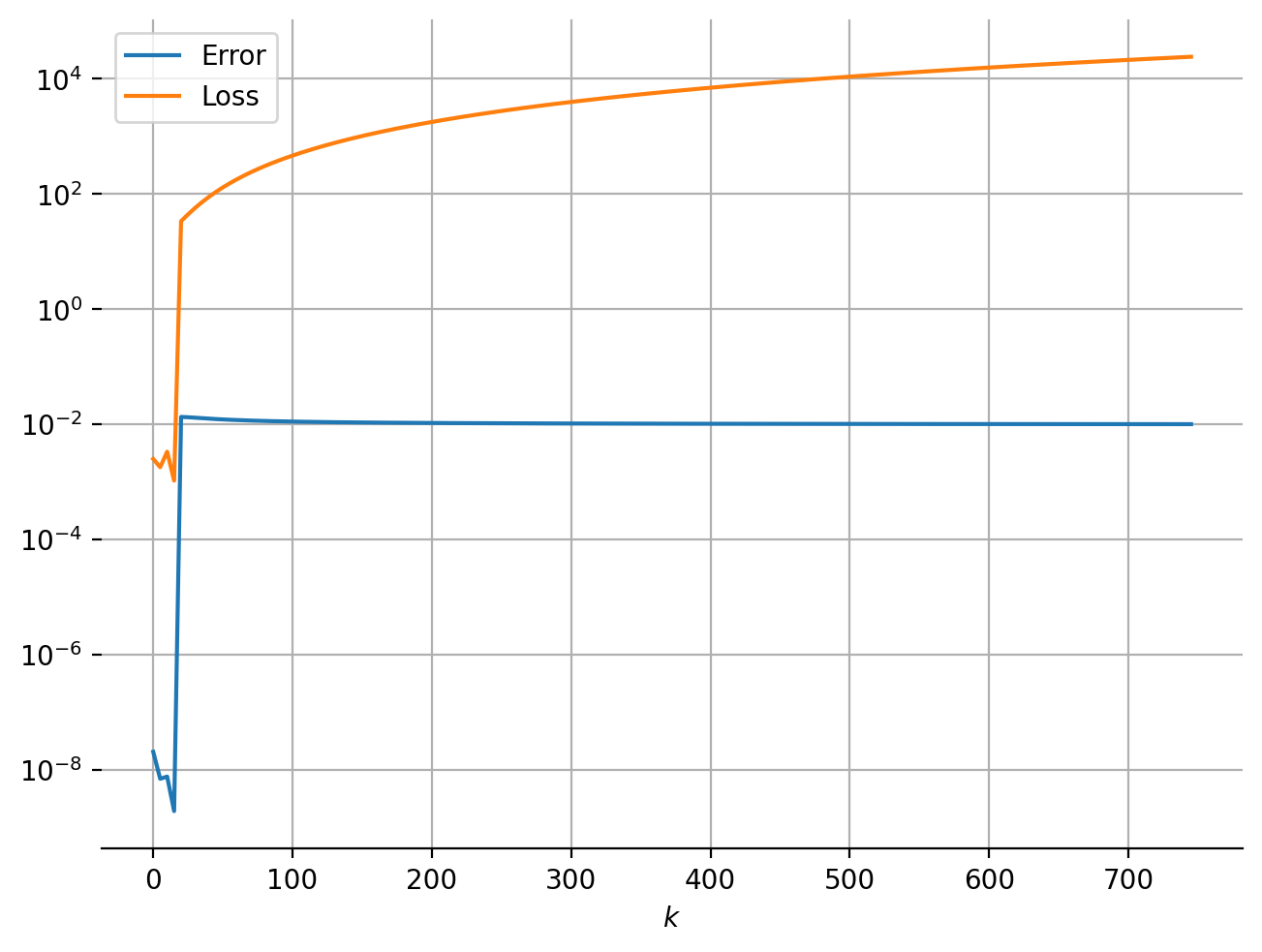}
	\caption{{\sf Loss} and {\sf Error} after 500 epochs as $k$ increases \; ($\varepsilon=1$, $\lambda=7$)}
         \label{fig:train}
     \end{subfigure}
     \hfill
     \begin{subfigure}[b]{0.43\textwidth}
         \centering
         \includegraphics[width=\textwidth]{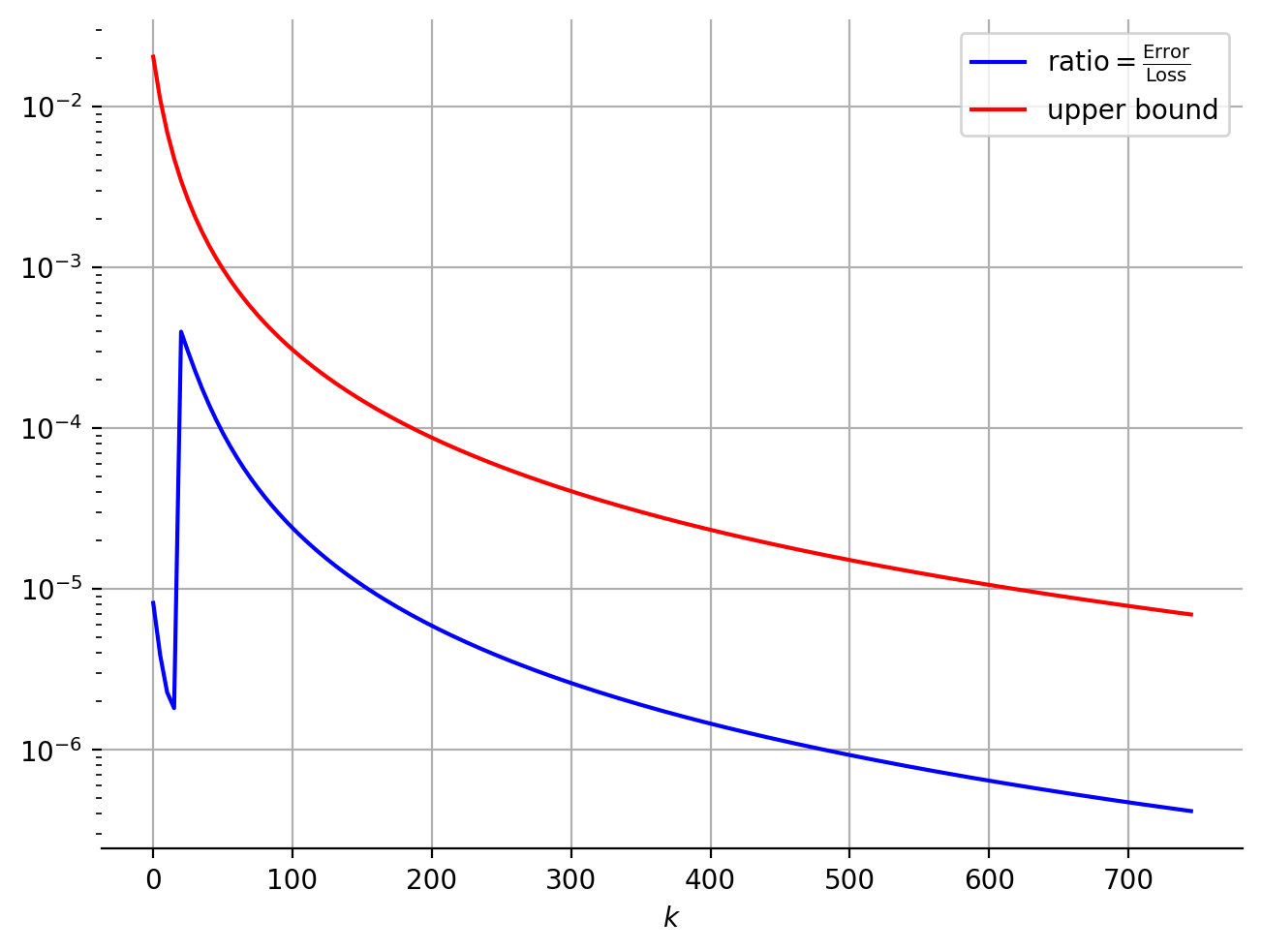}
	\caption{Ratio  ($\frac{\sf Error}{\sf Loss}$)  bounded by the upper bound $\frac{1}{\gamma^2} = \frac{4}{(k+14)^2}$ as $k$ increases \; ($\varepsilon=1$, $\lambda=7$)}
         \label{fig:result}
     \end{subfigure}
     \caption{As $k$ increases, {\sf Loss} increases but {\sf Error} remains robust (Example \ref{examplefirst})}
     \label{figaskincrero}
\end{figure}
\end{itemize}
In Figure \ref{figaskincrero}(a), we can observe that as $\lambda$ increases, {\sf Loss} also increases but {\sf Error} remains stable. This phenomenon is well explained by Figure \ref{figaskincrero}(b) where the ratio ($\frac{\sf Error}{\sf Loss}$) rapidly decreases with the upper bound $\frac{1}{\gamma^2}=\frac{4}{(k+14)^2}$ as $k$ increases.
\centerline{}
\centerline{}
\begin{itemize}
\item[(3)]
Let $k=10$ and $\lambda=15$.
\begin{figure}[!h]
     \centering
     \begin{subfigure}[b]{0.43\textwidth}
         \centering
         \includegraphics[width=\textwidth]{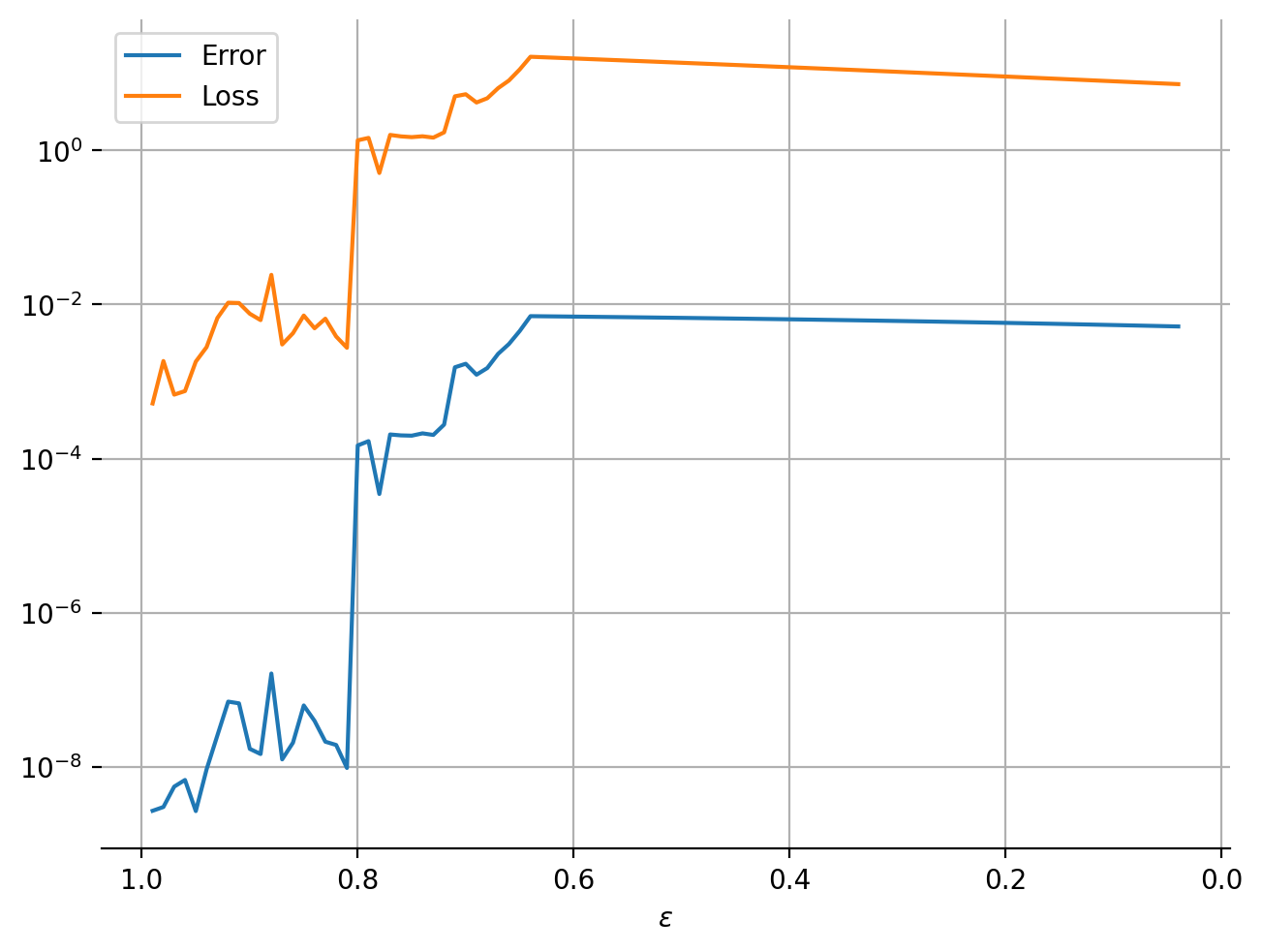}
	\caption{{\sf Loss} and {\sf Error} after 500 epochs as $\varepsilon$ decreases \; ($k=10$ and $\lambda=15$)}
         \label{fig:train}
     \end{subfigure}
     \hfill
     \begin{subfigure}[b]{0.43\textwidth}
         \centering
         \includegraphics[width=\textwidth]{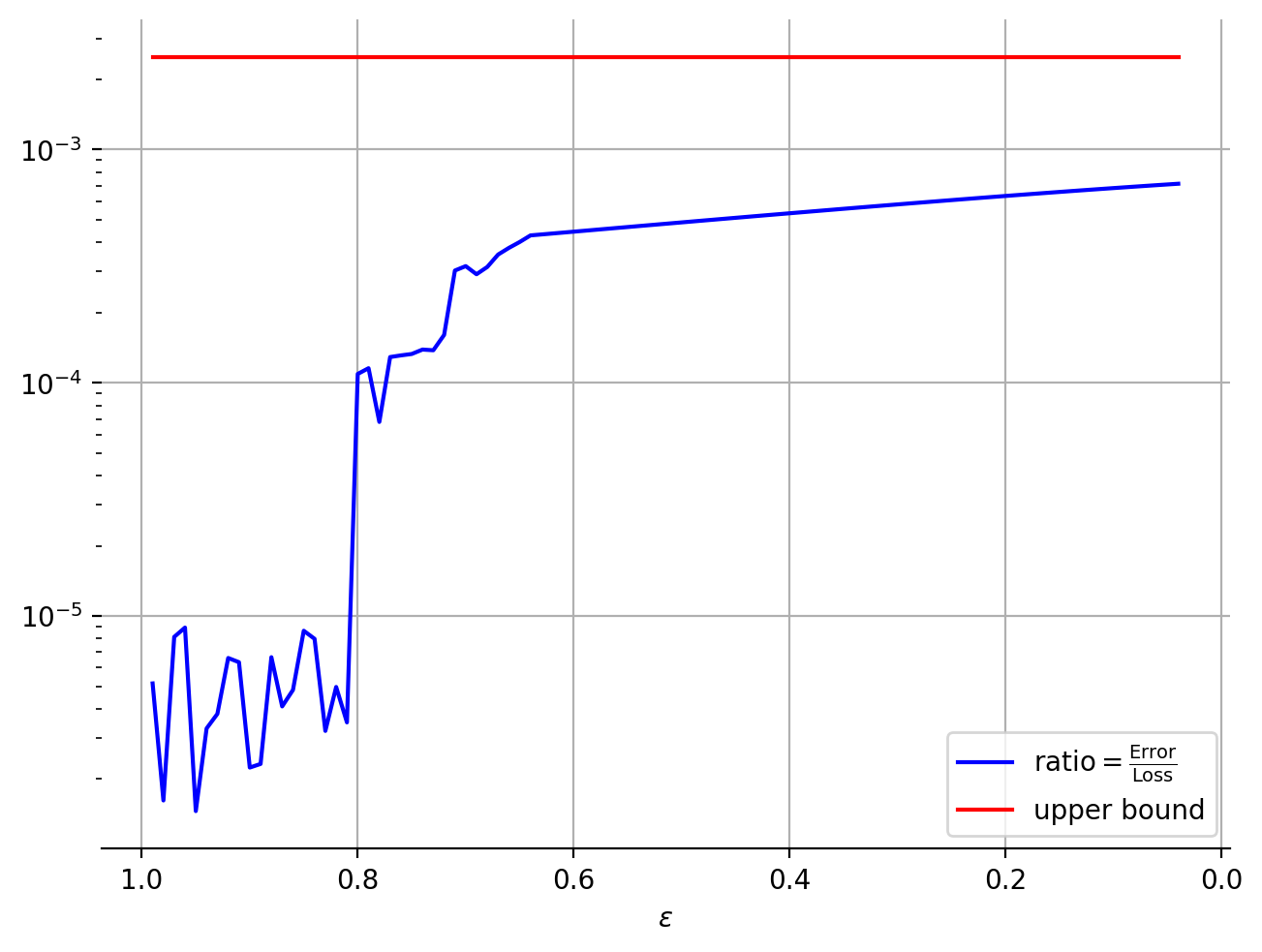}
	\caption{Ratio ($\frac{\sf Error}{\sf Loss}$) bounded by the upper bound $\frac{1}{\gamma^2}=\frac{1}{400}$ as $\varepsilon$ decreases \; ($k=10$ and $\lambda=15$)}
         \label{fig:result}
     \end{subfigure}
     \caption{As $\varepsilon$ decreases, {\sf Loss} and {\sf Error} remain stable (Example \ref{examplefirst})}
	\label{figasvarepincrero}
\end{figure}
\end{itemize}
	In Figure \ref{figasvarepincrero}(a), we can observe that as $\varepsilon$ decreases, {\sf Loss} and {\sf Error} remain robust. Since $\varepsilon$ is a very small value, the stability of {\sf Loss} is well-explained. Since $k$ and $\lambda$ are fixed as $10$ and $15$, respectively and $\varepsilon$ only decreases, we can expect that by our error estimates \eqref{errorestindis}, $\frac{\sf Error}{\sf Loss}$ does not change dramatically. This expectation is realized by Figure \ref{figasvarepincrero}(b) where $\frac{\sf Error}{\sf Loss}$ are bounded by $\frac{1}{\gamma^2}=\frac{1}{400}$, and hence the robustness of {\sf Error} is well-explained.
\end{exam}
\centerline{}
\centerline{}
\begin{exam} \label{example2nd} \rm
Let $I=(0,1)$. Let $\lambda, k \geq 0$ be constants. Let $b(x)=-kx$, $c(x)=\lambda$ and \\
$\widetilde{y}(x) =\sin \pi x + \cos \pi x$, $x \in \mathbb{R}$. Then,
$$
\widetilde{y}'(x) = \pi \cos \pi x - \pi \sin \pi x, \;\;\;\;\;  \widetilde{y}''(x)=-\pi^2 \sin \pi x -\pi^2 \cos \pi x, \quad \text{ for all $x \in \mathbb{R}$}.
$$
Let $f(x):=-\varepsilon \widetilde{y}''(x)+b(x)\widetilde{y}'(x)+c\widetilde{y}(x)=\Big(   \varepsilon \pi^2 +\pi k x +\lambda \Big)  \sin \pi x + \Big( \varepsilon \pi^2 -\pi k x +\lambda \Big) \cos \pi x$, \,$x \in \mathbb{R}$.
Then, $\widetilde{y}$ is a unique strong solution to \eqref{bvpstrong} with $p=1$, $q=-1$. As in Example \ref{examplefirst}, the upper bound of $\frac{\sf Error}{\sf Loss}$ is expressed as
$\frac{1}{\gamma^2} =\frac{4}{(k+2\lambda)^2}$.
\\ \\
All experimental details are identical to those described in Example \ref{pinn12newex}. \\
\begin{itemize}
\item[(1)]
Let $\varepsilon=1$ and $k=7$.
\begin{figure}[!h]
     \centering
     \begin{subfigure}[b]{0.43\textwidth}
         \centering
         \includegraphics[width=\textwidth]{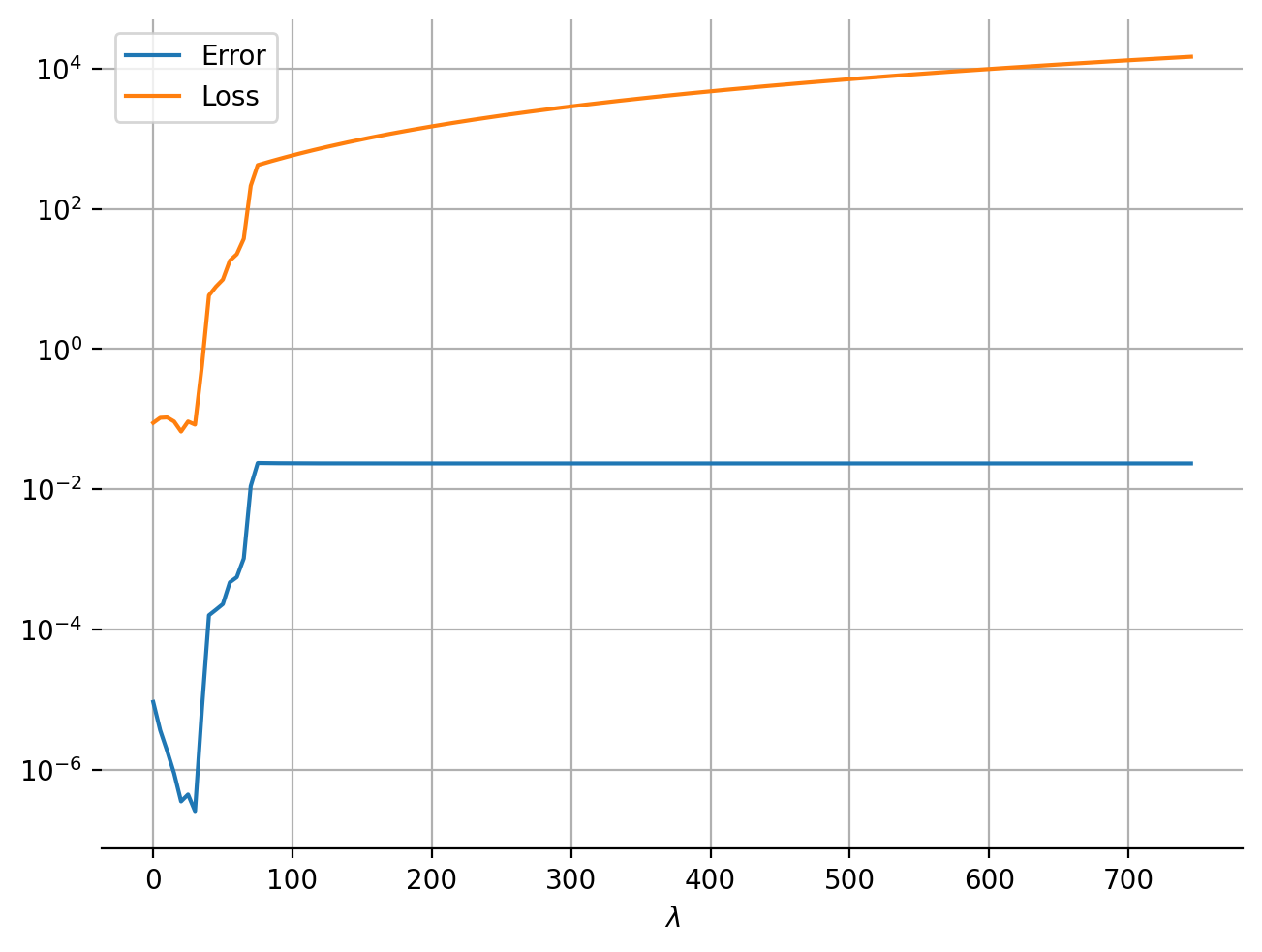}
	\caption{{\sf Loss} and {\sf Error} after 500 epochs as $\lambda$ increases \; ($\varepsilon=1$, $k=7$)}
         \label{fig:train}
     \end{subfigure}
     \hfill
     \begin{subfigure}[b]{0.43\textwidth}
         \centering
         \includegraphics[width=\textwidth]{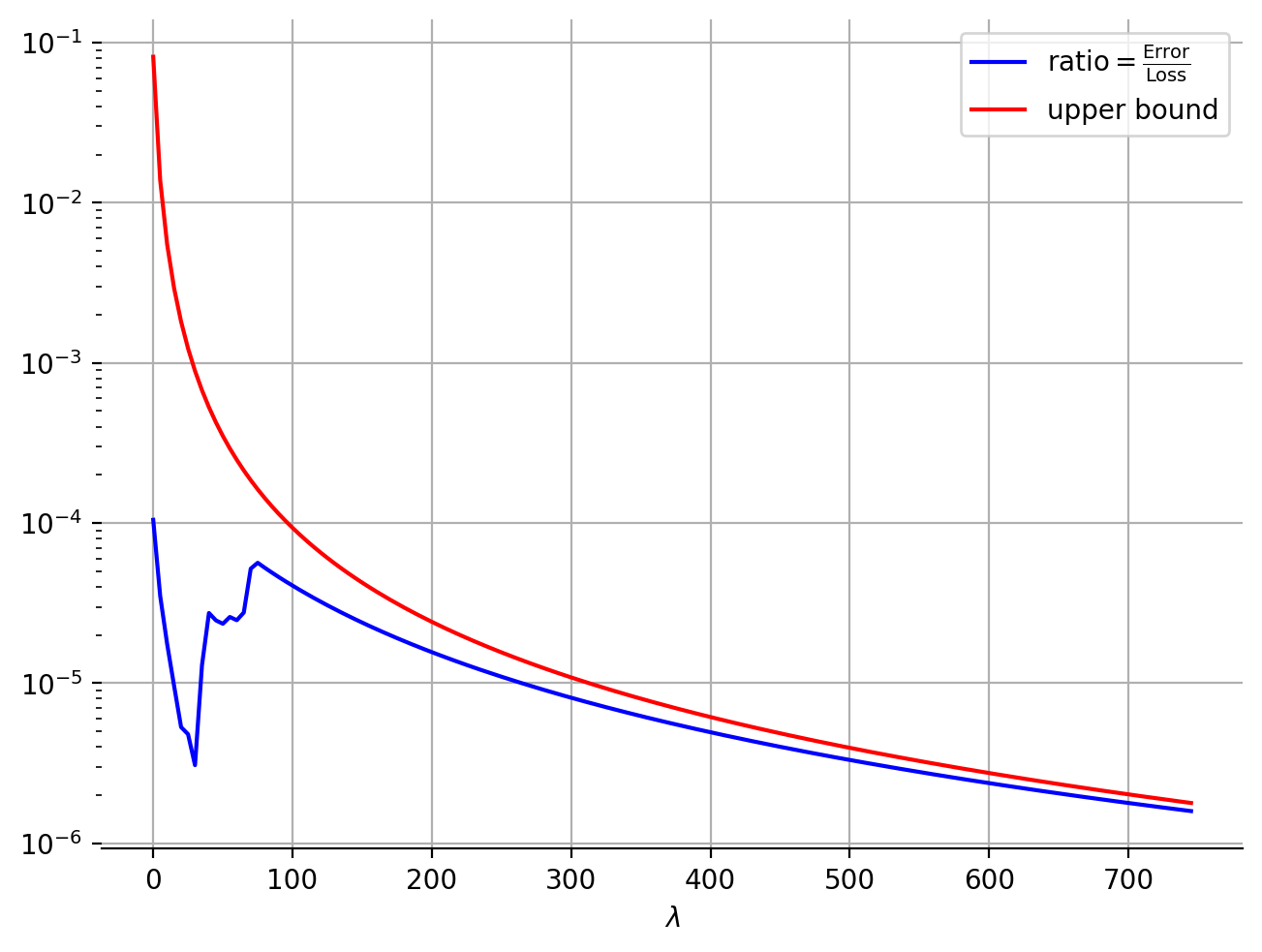}
	\caption{Ratio ($\frac{\sf Error}{\sf Loss}$) bounded by the upper bound $\frac{1}{\gamma^2}=\frac{4}{(7+2\lambda)^2}$ as $\lambda$ increases \;($\varepsilon=1$, $k=7$)}
     \end{subfigure}
\caption{As $\lambda$ increases, {\sf Loss} increases but {\sf Error} remains robust (Example \ref{example2nd})}
 \label{figlastlambda}
\end{figure}
\end{itemize}
In Figure \ref{figlastlambda}(a), we can observe that as $\lambda$ increases, {\sf Loss} also increases but {\sf Error} remains stable. This phenomenon is well explained by Figure \ref{figlastlambda}(b) where the ratio ($\frac{\sf Error}{\sf Loss}$) rapidly decreases with the upper bound $\frac{1}{\gamma^2}=\frac{4}{(7+2\lambda)^2}$ as $\lambda$ increases.
\centerline{}
\newpage
\begin{itemize}
\item[(2)]
Let $\varepsilon=1$ and $\lambda=7$.
\begin{figure}[!h]
     \centering
     \begin{subfigure}[b]{0.43\textwidth}
         \centering
         \includegraphics[width=\textwidth]{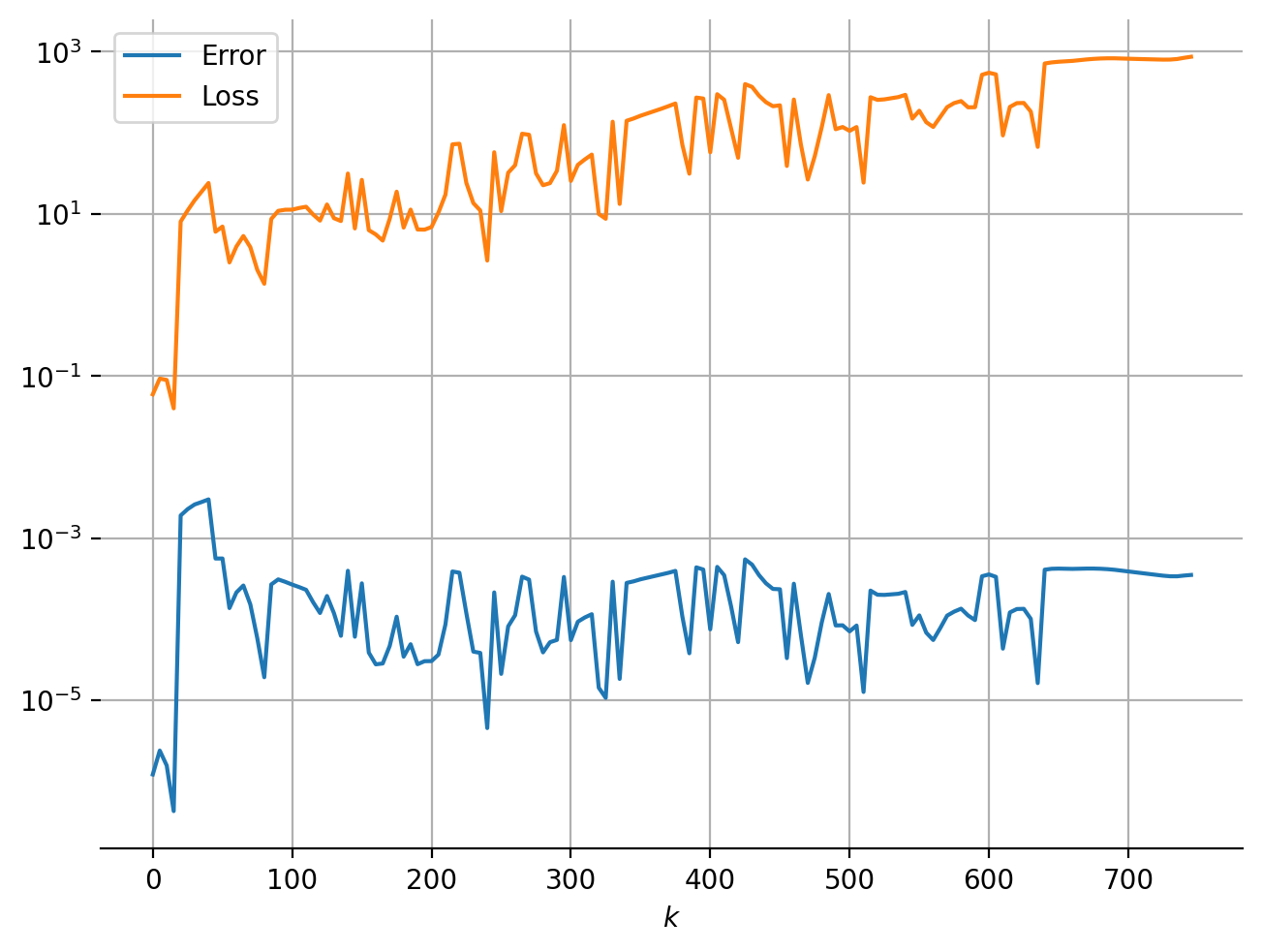}
	\caption{{\sf Loss} and {\sf Error} after 500 epochs as $k$ increases\; ($\varepsilon=1$, $\lambda=7$)}
         \label{fig:train}
     \end{subfigure}
     \hfill
     \begin{subfigure}[b]{0.43\textwidth}
         \centering
         \includegraphics[width=\textwidth]{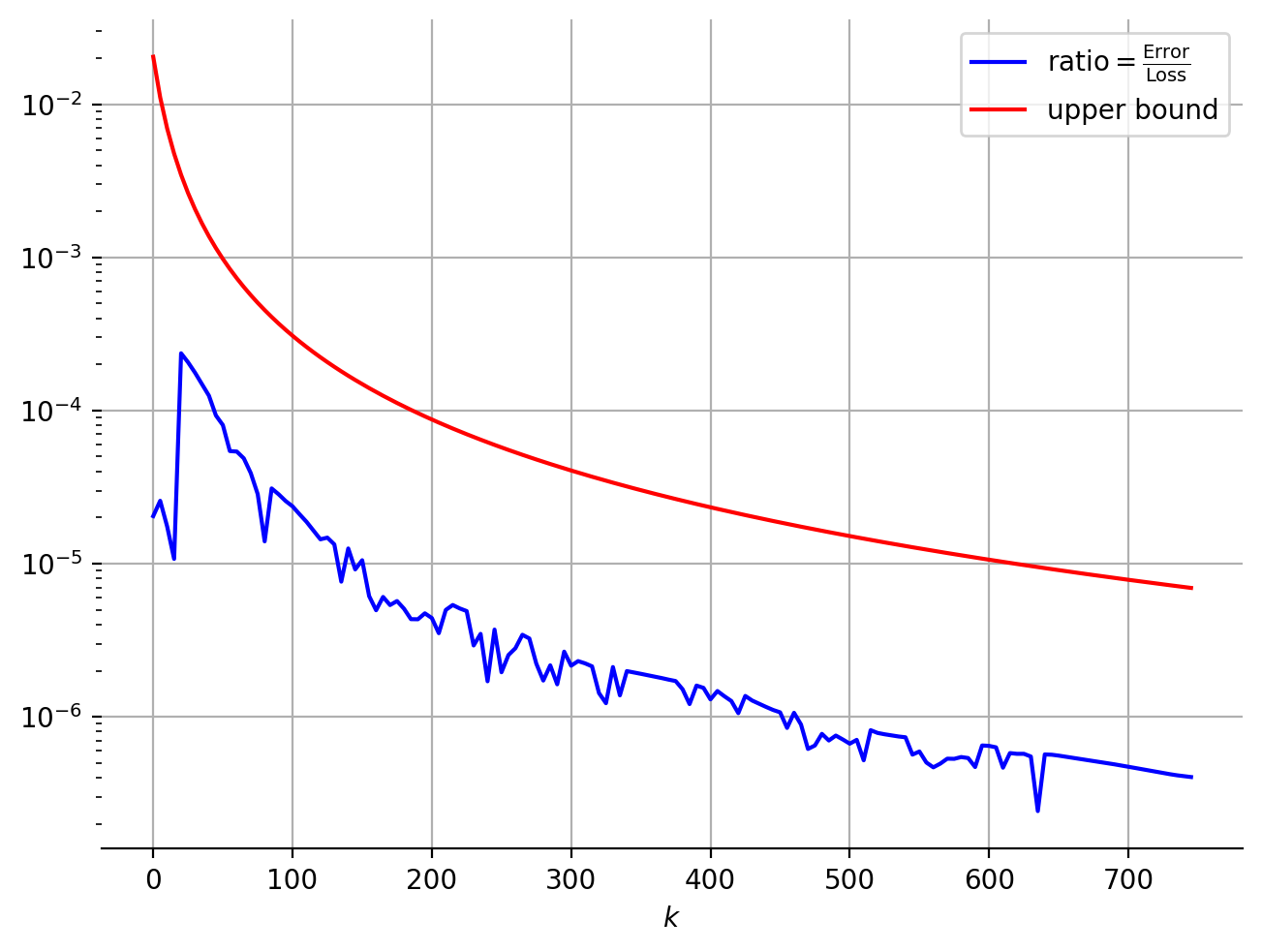}
	\caption{Ratio  ($\frac{\sf Error}{\sf Loss}$)  bounded by the upper bound $\frac{1}{\gamma^2} = \frac{4}{(k+14)^2}$ as $k$ increases \;($\varepsilon=1$, $\lambda=7$)}
         \label{fig:result}
     \end{subfigure}
     \caption{As $k$ increases, {\sf Loss} increases but {\sf Error} remains robust (Example \ref{example2nd})}
	\label{figfinaskincre}
\end{figure}
\end{itemize}
In Figure \ref{figfinaskincre}(a), we can observe that as $\lambda$ increases, {\sf Loss} also increases but {\sf Error} remains stable. This phenomenon is well explained by Figure \ref{figfinaskincre}(b) where the ratio ($\frac{\sf Error}{\sf Loss}$) rapidly decreases with the upper bound $\frac{1}{\gamma^2}=\frac{4}{(k+14)^2}$ as $k$ increases.
\centerline{}	
\centerline{}
\begin{itemize}
\item[(3)]
Let $k=10$ and $\lambda=15$.
\begin{figure}[!h]
     \centering
     \begin{subfigure}[b]{0.43\textwidth}
         \centering
         \includegraphics[width=\textwidth]{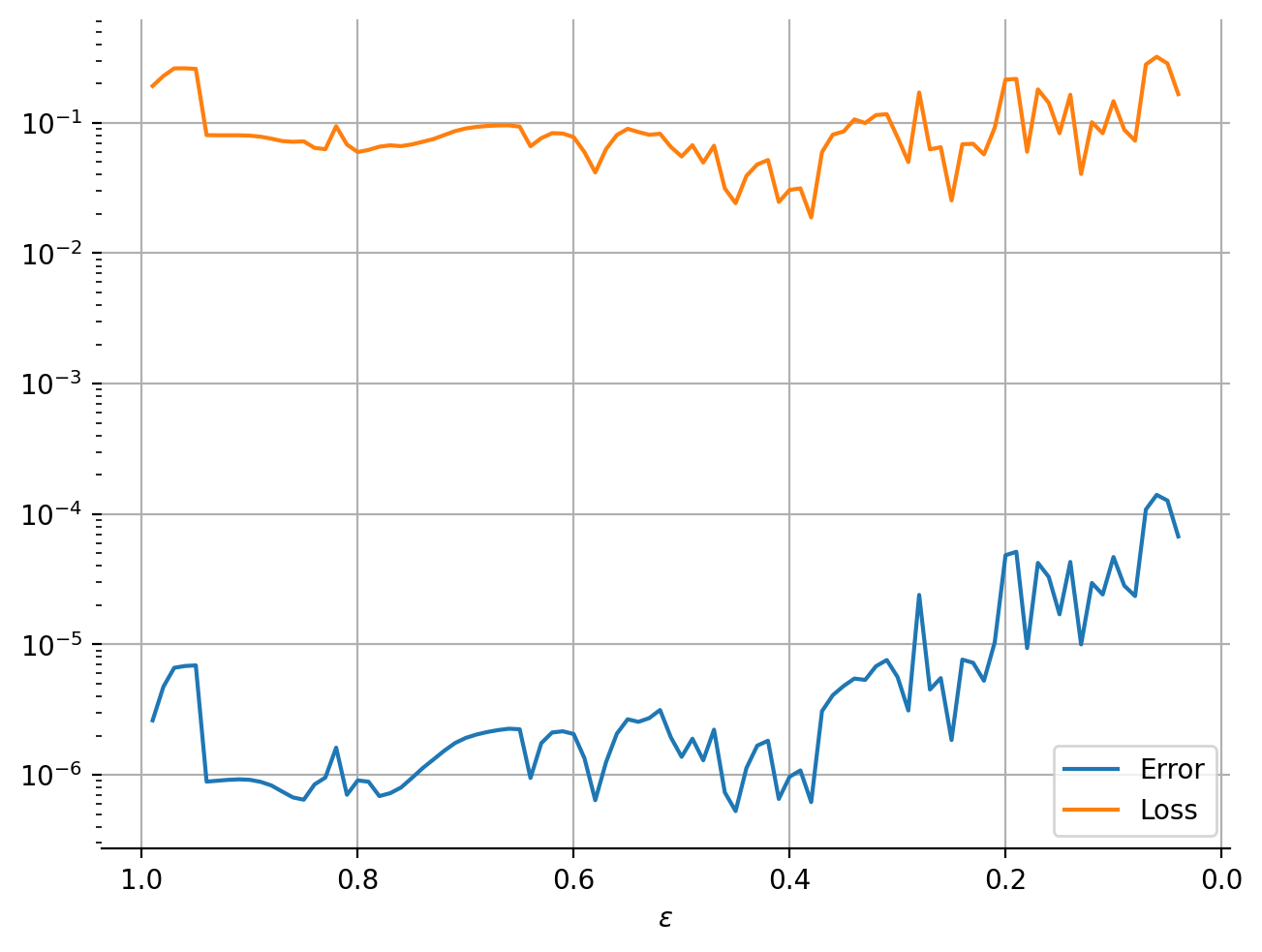}
	\caption{{\sf Loss} and {\sf Error} after 500 epochs as $\varepsilon$ decreases \; ($k=10$, $\lambda=15$)}
         \label{fig:train}
     \end{subfigure}
     \hfill
     \begin{subfigure}[b]{0.43\textwidth}
         \centering
         \includegraphics[width=\textwidth]{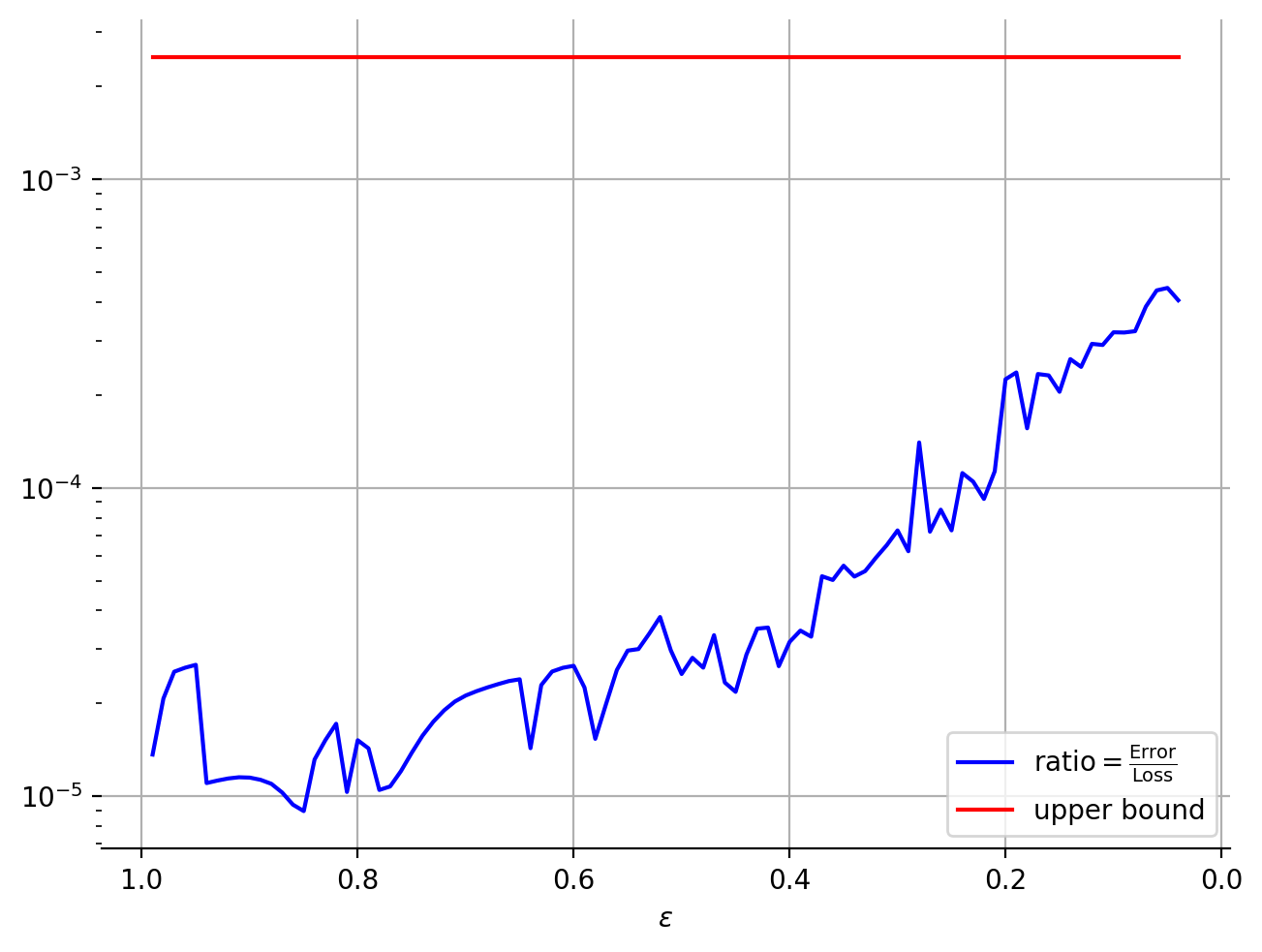}
	\caption{Ratio ($\frac{\sf Error}{\sf Loss}$) bounded by the upper bound $\frac{1}{\gamma^2}=\frac{1}{400}$ as $\varepsilon$ decreases \; ($k=10$, $\lambda=15$)}
         \label{fig:result}
     \end{subfigure}
    \caption{As $\varepsilon$ decreases, {\sf Loss} and {\sf Error} remain stable (Example \ref{example2nd})}
	\label{figvarepfin}
\end{figure}
\end{itemize}
	In Figure \ref{figvarepfin}(a), we can observe that as $\varepsilon$ decreases, {\sf Loss} and {\sf Error} remain stable. Since $\varepsilon$ is a very small value, the stability of {\sf Loss} is well-explained. Since $k$ and $\lambda$ are fixed as $10$ and $15$, respectively and $\varepsilon$ only decreases, we can expect that by our error estimates \eqref{errorestindis}, $\frac{\sf Error}{\sf Loss}$ does not change dramatically. This expectation is realized by Figure \ref{figvarepfin}(b) where $\frac{\sf Error}{\sf Loss}$ are bounded by $\frac{1}{\gamma^2}=\frac{1}{400}$, and hence the robustness of {\sf Error} is well-explained. 
\end{exam}
\centerline{}
\section{Conclusions and discussion} \label{conout}
\noindent
Based on the universal approximation theorem derived from the Stone-Weierstrass theorem, solutions to differential equations can be uniformly approximated by neural networks. However, due to the abstract nature of the Stone-Weierstrass theorem, finding explicit neural networks that approximate a solution to a differential equation would be very challenging. As an alternative, through a posteriori analysis, we have demonstrated that {\sf Error} is controlled by {\sf Loss}
(see \eqref{pinn1erbdd}, \eqref{engpinn2erbdd}, \eqref{weighterbdd}, \eqref{errorestindis}), and hence we can obtain effective error bounds in terms of {\sf Loss}. \\
Our analysis is specifically based on the Sobolev space theory with a variational approach. Initially, we rigorously established the existence and uniqueness of solutions to one-dimensional boundary value problems for second-order linear elliptic equations with highly irregular coefficients. There are two approaches for selecting a trial function to approximate the solution to a differential equation (see Section \ref{basicexperime}): the first approach is to directly use a neural network itself as a trial function ({\sf PINN I}), and the second approach is to use a function based on the neural network that exactly fulfills the given boundary conditions {\sf (PINN II)}. Experimentally, we confirmed that {\sf (PINN II)} is more efficient in approximating the solution. \\
Ultimately, our main error estimate is \eqref{errorestindis} which demonstrates that under the assumption of \eqref{maincondiforour},  $\frac{\sf Error}{\sf Loss}$ decreases rapidly with the upper bound $\frac{1}{\gamma^2}$ as $\gamma$ increases, so that {\sf Error} is robustly maintained even though {\sf Loss} increases. These results refute the conventional belief that numerical cost increases as the quantities of the coefficients increase. Our research indicates that PINN can be efficient numerical solvers for differential equations with large quantities of coefficients.\\
In the case of the upper bound of the ratio, Figure \ref{figdxcontraes}(b) and Figure \ref{figlastlambda}(b) describe very sharp upper bounds for $\frac{\sf Error}{\sf Loss}$, while Figure \ref{figaskincrero}(b) and Figure \ref{figfinaskincre}(b) present less sharp bounds, and hence further studies can be conducted to derive sharper upper bounds. Further discussion is required to determine under what additional conditions the rapid decrease in $\frac{\sf Error}{\sf Loss}$ can be expected. Empirically, it is observed that the larger quantities of the first-order coefficients $b$ and the zero-order coefficients $c$ imply the small values of $\frac{\sf Error}{\sf Loss}$. In future research, a rigorous mathematical demonstration is required to present additional explicit conditions on the coefficients that guarantee a rapid decrease in $\frac{\sf Error}{\sf Loss}$ beyond the condition \eqref{maincondiforour}.\\
The most innovative aspect of this paper is that we specifically calculated the constant arising in the $L^2$-estimate and, in particular, verified through $L^2$-contraction estimates that this constant decreases rapidly as the lower bound of the zero-order term increases. This is highly significant as it provides a concrete upper bound for the error-to-loss ratio in a posteriori error analysis of PINN, which enables us to estimate $L^2$-error by calculating $L^2$-training loss. Our research emphasizes the importance of precisely calculating constants in $L^2$-estimates in future studies. However, the current results are limited to the one-dimensional Dirichlet boundary value problem, and the condition on the coefficients for obtaining $L^2$-contraction estimates is somewhat restrictive because we need a constant $\gamma>0$ that is not small and satisfies $-\frac{1}{2}b'+c \geq \gamma$ in $I$. Further research is necessary to overcome these limitations.\\
Through this paper, it was confirmed that robust error estimates are guaranteed regardless of the quantities of coefficients in one-dimensional boundary value problems for linear elliptic equations. Moreover, we could confirm the possibility of an efficient numerical solution through PINN for differential equations with very large quantities of coefficients, and we expect that this will be applied to more general multi-dimensional elliptic and parabolic differential equations with singular coefficients and general boundary conditions.
\\
\centerline{}
The code used for numerical experiments in this paper is available at  \\
{\sf https://github.com/hahmYoo/Robust-error-estimates-of-PINN}.
\centerline{}
\centerline{}

\section*{Acknowledgments}
\noindent
We would like to thank the anonymous reviewers for their valuable comments and suggestions, which improve the content of this paper.

Jihahm Yoo \\
Korea Science Academy of KAIST, \\ 
Busan 47162, Republic of Korea, \\
E-mail: jihahmyoo@gmail.com
\centerline{}
\centerline{}
Haesung Lee\\
Department of Mathematics and Big Data Science,  \\
Kumoh National Institute of Technology, \\
Gumi, Gyeongsangbuk-do 39177, Republic of Korea, \\
E-mail: fthslt@kumoh.ac.kr, \; fthslt14@gmail.com
\end{document}